\documentclass[11pt, a4paper,leqno]{amsart}
\usepackage{amsmath,amsthm,amscd,amssymb,amsfonts, amsbsy}
\usepackage{latexsym}
\usepackage{txfonts}
\usepackage{exscale}
\usepackage{graphicx}
\usepackage{bbm}
\usepackage{enumerate}
\usepackage[utf8]{inputenc}

\usepackage[colorlinks,citecolor=red,pagebackref,hypertexnames=false]{hyperref}

\usepackage{caption}
\usepackage{subcaption}

\usepackage{color}

\usepackage{tikz}
\tikzstyle{thin}=[line width=1.5pt]
\tikzstyle{fat}=[line width=2pt]
\tikzstyle{heavier}=[line width=3pt]
\tikzstyle{ultrafat}=[line width=4pt]
\tikzstyle{ultrafat-pt}=[line width=7pt]
\definecolor{myred}{HTML}{E53935}
\definecolor{myblue}{HTML}{1E88E5}
\definecolor{mygreen}{HTML}{43A047}
\definecolor{myyellow}{HTML}{FDD835}
\definecolor{myorange}{HTML}{FB8C00}
\definecolor{mygold}{HTML}{F9A825}
\definecolor{mypurple}{HTML}{8E24AA}
\definecolor{mygray}{HTML}{BDBDBD}
\definecolor{mybrown}{HTML}{6D4C41}
\definecolor{mynavy}{HTML}{1A237E}
\definecolor{mypink}{HTML}{ffbfca}
\definecolor{myseagreen}{HTML}{26A69A}
\definecolor{myviolet}{HTML}{f07ef0}
\definecolor{mydarkblue}{HTML}{0D47A1}
\definecolor{mydarkcyan}{HTML}{E0FFFF}
\definecolor{darkgray}{rgb}{0.66, 0.66, 0.66}
\definecolor{mydarkgreen}{HTML}{1B5E20}
\definecolor{mydarkmagenta}{HTML}{AD1457}
\definecolor{mydarkorange}{HTML}{EF6C00}
\definecolor{lightblue}{rgb}{0.68, 0.85, 0.9}
\definecolor{lightcyan}{rgb}{0.88, 1.0, 1.0}
\definecolor{lightgray}{rgb}{0.83, 0.83, 0.83}
\definecolor{mylightgreen}{HTML}{81C784}
\definecolor{lightyellow}{rgb}{1.0, 1.0, 0.88}
\definecolor{myshadow}{rgb}{0.5, 0.5, 0.5}
\definecolor{pink}{rgb}{1.0, 0.75, 0.8}
\definecolor{violet}{rgb}{0.93, 0.51, 0.93}
\definecolor{myauxcolor}{RGB}{245, 255, 255}
\definecolor{mylightgreen}{RGB}{193, 225, 159}

\let\oldtocsection=\tocsection
\let\oldtocsubsection=\tocsubsection
\let\oldtocsubsubsection=\tocsubsubsection
\renewcommand{\tocsection}[2]{\hspace{0em}\oldtocsection{#1}{#2}}
\renewcommand{\tocsubsection}[2]{\hspace{2em} \oldtocsubsection{#1}{\small{#2}}}
\renewcommand{\tocsubsubsection}[2]{\hspace{4em}\oldtocsubsubsection{#1}{\scriptsize{#2}}}

\calclayout
\allowdisplaybreaks

\theoremstyle{plain}
\newtheorem{theorem}[equation]{Theorem}
\newtheorem{lemma}[equation]{Lemma}
\newtheorem{corollary}[equation]{Corollary}
\newtheorem{proposition}[equation]{Proposition}

\theoremstyle{definition}
\newtheorem{definition}[equation]{Definition}

\theoremstyle{remark}
\newtheorem{remark}[equation]{Remark}

\numberwithin{equation}{section}

\newcommand{\dist}{\operatorname{dist}}

\newcommand{\re}{\mathbb{R}}
\newcommand{\rn}{\mathbb{R}^n}

\newcommand{\ree}{\mathbb{R}^{n+1}}

\newcommand{\N}{\mathbb{N}}

\newcommand{\cH}{\mathcal{H}}

\newcommand{\Xbf}{\mathbf{X}}
\newcommand{\xbf}{\mathbf{x}}
\newcommand{\Ybf}{\mathbf{Y}}
\newcommand{\ybf}{\mathbf{y}}
\newcommand{\Zbf}{\mathbf{Z}}
\newcommand{\zbf}{\mathbf{z}}

\DeclareMathOperator{\supp}{supp}
\DeclareMathOperator{\diam}{diam}

\newcommand{\vertiii}[1]{{\left\vert\kern-0.15ex\left\vert\kern-0.15ex\left\vert #1
		\right\vert\kern-0.15ex\right\vert\kern-0.15ex\right\vert}}

\def\Xint#1{\mathchoice
{\XXint\displaystyle\textstyle{#1}}%
{\XXint\textstyle\scriptstyle{#1}}%
{\XXint\scriptstyle\scriptscriptstyle{#1}}%
{\XXint\scriptscriptstyle%
\scriptscriptstyle{#1}}%
\!\int}
\def\XXint#1#2#3{{\setbox0=\hbox{$#1{#2#3}{%
\int}$ }
\vcenter{\hbox{$#2#3$ }}\kern-.6\wd0}}
\def\barint{\,\Xint -} 
\def\bariint{\barint_{} \kern-.4em \barint}
\def\bariiint{\bariint_{} \kern-.4em \barint}
\renewcommand{\iint}{\int_{}\kern-.34em \int} 
\renewcommand{\iiint}{\iint_{}\kern-.34em \int} 

\renewcommand{\d}{\, \mathrm{d}}

\newcommand{\R}{\mathbb{R}}
\newcommand{\Z}{\mathbb{Z}}
\providecommand{\ab}[1]{  \lvert  #1  \rvert }
\providecommand{\abs}[1]{ \left \lvert  #1 \right \rvert }

\providecommand{\no}[1]{  \lVert  #1  \rVert }

\providecommand{\trm}[1]{\textrm{#1}}

\DeclareMathOperator{\I}{I}
\DeclareMathOperator{\II}{II}

\title[]{The parabolic Dini-$\beta$ condition and absolute continuity of surface and caloric measure} 
\author[S. Bortz, M. Egert,  S. Ferris, O. Saari]
{Simon Bortz, Moritz Egert, Sandra Ferris and Olli Saari}

\address{Simon Bortz , 
Department of Mathematics and CONSERVE-AWI Group
\\
University of Alabama
\\
Tuscaloosa, AL, 35487, USA}

\email{sbortz@ua.edu}

\address{Moritz Egert, TU Darmstadt, Fachbereich Mathematik, Schlossgartenstr.\ 7, 64289 Darmstadt, Germany}
\email{egert@mathematik.tu-darmstadt.de}

\address{Sandra Ferris, Department of Mathematics
\\
University of Alabama
\\
Tuscaloosa, AL, 35487, USA}
\email{siferris.crimson@ua.edu}

\address{Olli Saari, Departament de Matem\`atiques,
	Universitat Polit\`ecnica de Catalunya,
	Avinguda Diagonal 647, 08028 Barcelona,
	Catalunya, Spain and Institute of Mathematics, Universitat Polit\`ecnica de Catalunya, Pau Gargallo 14, 08028 Barcelona, Catalunya, Spain}

\address{Centre de Recerca Matem\`atica, Edifici C, Campus Bellaterra, 08193 Bellaterra, Catalunya, Spain}
\email{olli.saari@upc.edu}

\date{\today}

\keywords{}
\subjclass[2010]{}
\thanks{S.B. was  supported by the National Science Foundation (NSF DMS-2555449) a Simons foundation grant ``Travel support for Mathematicians'' and an AWI-CONSERVE fellowship.
O.S. was supported by the Spanish State Research Agency MCIN/AEI/10.13039/501100011033, Next Generation EU and by ERDF “A way of making Europe” through the grants RYC2021-032950-I, PID2021-123903NB-I00 and the Severo Ochoa and Maria de Maeztu Program for Centers and Units of Excellence in R\&D, grant number CEX2020-001084-M.
This project was carried out while the authors were visiting University of Alabama and the authors express their gratitude to UA. Roll Tide!}

\begin{document}
\allowdisplaybreaks

\begin{abstract}
We show if $\partial \Omega$ is the graph of a parabolic Lipschitz function,
then parabolic surface measure $\sigma$ of $\partial \Omega$ is absolutely continuous with respect to its caloric measure if and only if a (square) Dini-$\beta$ condition is satisfied. 
More specifically, the (square) Dini-$\beta$ condition is that
\[\int_0^1 \hat{\beta}(X,t,r)^2 \frac{dr}{r} < \infty, \quad \text{$\sigma$-a.e. } (X,t) \in \partial \Omega.\]
Here $\hat{\beta}$ is a parabolic version of the Jones ($L^2$) $\beta$-numbers. We show that these conditions are satisfied if and only if the graph is covered by a countable collection of {\it regular} Lipschitz graphs, that is, graphs with additional in-time regularity in the form of a half order time derivative in the parabolic BMO space. This supports the view that covering by {\it regular} Lipschitz graphs is the right notion for qualitative parabolic rectifiability in the context of parabolic PDEs. 

We also show that if 
\[\int_0^1 \hat{\beta}(X,t,r)^2 \frac{dr}{r} < \infty\]
up to a set of caloric measure zero then the caloric measure is absolutely continuous with respect to surface measure.
\end{abstract}

\subjclass{35K20, 31C45, 28A75, 35K05, 35R35, 42B25} 
\maketitle 

\tableofcontents

\section{Introduction}
Throughout the paper, $n \ge 2$ is a natural number and a generic point of the space time is denoted by   
\[
\Xbf =  (x_0,x,t) = (x_0,\xbf) = (X,t)  \in \ree:=\re \times \re^{n-1}\times \re.
\]
Given a continuous function $f: \mathbb{R}^n \to \R$, $f = f(x,t)$, 
we study the heat equation in  
\[
\Omega_f := \{ \Xbf : x_0 > f(\xbf) \}.
\]  
\textit{The caloric measure} with pole at $\Xbf \in \Omega_f$ is the unique Radon measure on $\partial \Omega_f$ 
representing the Perron solutions of the continuous Dirichlet problem
\begin{align*}
\partial_t u - \Delta u &= 0 \quad \text{ in } \Omega_f, \\
u &= g  \quad \text{ on } \partial \Omega_f
\end{align*} 
as 
\[
u(\Xbf) = \int_{\partial \Omega} g(\ybf) d\omega_f^{\Xbf}(\ybf).
\]
The recent work \cite{BHMN1} together with \cite{Lew-Mur-Mem} show that the caloric measure is $A_{\infty}$ related in a suitable sense with the parabolic surface measure of the graph of a parabolic Lipschitz function $\partial \Omega_f$ if and only if $f$ is a \textit{regular} parabolic Lipschitz function (see Definition \ref{def:regular-graph}),
corresponding to the graph of a parabolic Lipschitz function being a parabolic uniformly rectifiable set.

While the papers \cite{BHMN1} and \cite{Lew-Mur-Mem} settle the question of \textit{quantitative} mutual absolute continuity, 
they leave open the question about absolute continuity understood as a \textit{qualitative} condition.

In this paper,
we introduce a condition in terms of beta numbers, corresponding to parabolic rectifiability,
which characterizes the \textit{qualitative} mutual absolute continuity of the parabolic surface measure and the caloric measure,
precisely understood as follows:
\begin{definition}\label{abscty.def}
We say the (parabolic) surface measure $\sigma_f$ of $\partial \Omega_f$ is absolutely continuous with respect to caloric measure $\omega_f$ (understood as the family of all caloric measures $\{\omega_f^{\Xbf}\}_{\Xbf \in \Omega_f}$) if for every $(X,t) \in \Omega$ it holds that
for all Borel sets $E \subset \partial\Omega_f$
\[\omega_f^{\Xbf}( \{(Y,s) \in E: s < t\}) = 0 \quad \Longrightarrow \quad  \sigma_f(\{(Y,s) \in E: s < t\})  = 0.\]
In this case we will write $\sigma_f \ll \omega_f$.

\end{definition}

We define the parabolic beta number for $\Xbf \in \partial \Omega_f$ and $r > 0$ as  
\begin{equation}
\label{def:beta-number-intro}
\hat{\beta}(f,\Xbf;r) = \inf_{S \in \mathcal{S}} \left( \fint_{Q(\xbf;r)} \left( \frac{\dist_p(\ybf,S)}{r} \right)^2 \, d \sigma_f(\ybf) \right)^{1/2}
\end{equation}
where $Q(\xbf;r)$ is ball with respect to the distance \eqref{parabolic-distance} 
and $\mathcal{S}$ is the family of all $n$-dimensional affine subspaces $S \subset \R^{n+1}$ 
for which $(X,t) \in S$ for some $t$ implies $(X,t) \in S$ for all $t \in \R$.
Our main theorem is the following:

\begin{theorem}\label{main.thrm}
Let $f: \R^{n} \to \R$ be a parabolic Lipschitz function. 
The following are equivalent:
\begin{itemize}
\item[(a)] $\sigma_f \ll \omega_f$ in the sense of Definition \ref{abscty.def}.
\item[(b)] $f$ satisfies the Dini-$\beta$ condition  
\begin{equation}
\label{Db.eq}
\int_0^1 \hat{\beta}(f,\Xbf;r)^2 \frac{dr}{r} < \infty, \quad \text{ for $\sigma_f$ almost every $\Xbf \in \partial \Omega_f$} .
\end{equation}
\item[(c)] There exists a family of \textit{regular} $Lip(1,1/2)$ functions $\{f_j\}_{j=1}^{\infty}$ such that 
\[
\sigma_{f}\left( \partial \Omega_f \setminus \bigcup_{j=1}^{\infty} \partial \Omega_{f_j} \right) = 0.
\]
\end{itemize} 
\end{theorem}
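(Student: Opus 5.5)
We will prove the cycle (c) $\Rightarrow$ (a) $\Rightarrow$ (b) $\Rightarrow$ (c); the arrow (b) $\Rightarrow$ (c) looks like the main geometric step.

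\emph{(c) $\Rightarrow$ (a).} Fix $\Xbf = (X,t) \in \Omega_f$ and a Borel set $E \subset \partial\Omega_f \cap \{s<t\}$ with $\omega_f^{\Xbf}(E)=0$. By (c) it suffices to show $\sigma_f(E \cap \partial\Omega_{f_j}) = 0$ for each $j$.
\begin{enumerate}
\item[(i)] Truncate and localize: replace $f_j$ by a regular Lip$(1,1/2)$ function $g$ that agrees with $f_j$ on a cylinder and satisfies $g \ge f$. A standard construction is $g = \max(f, f_j)$, suitably smoothed away from the coincidence set. We then need to check that regularity, i.e.\ $D_t^{1/2} g \in$ parabolic BMO, survives this operation. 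This is delicate, so instead we would use the local bi-Lipschitz pieces supplied by the Dini condition, as in the (b) $\Rightarrow$ (c) construction.
\item[(ii)] The resulting domain $\Omega_g \subset \Omega_f$ has $\partial\Omega_g \cap \partial\Omega_f \supset F := E \cap \partial\Omega_{f_j}$ locally.
\item[(iii)] By the maximum principle, $\omega_g^{\Ybf}(F) \le \omega_f^{\Ybf}(F) = 0$ for poles $\Ybf$ in $\Omega_g$; a Harnack chain argument moves the pole back to $\Xbf$.
\item[(iv)] By \cite{BHMN1}, $\omega_g \in A_\infty(\sigma_g)$, so $\sigma_g(F)=0$, and on $F$ we have $\sigma_g = \sigma_f$.
\end{enumerate}

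\emph{(a) $\Rightarrow$ (b).} We would use a parabolic analogue of the elliptic argument relating $\beta$-square functions to harmonic measure. Fix a large Carleson box and a sawtooth region $\Omega_\ast \subset \Omega_f$ built over the set where $\omega_f$ has doubling, $A_\infty$-like behaviour relative to $\sigma_f$. Such sets exhaust $\sigma_f$-almost all of $\partial\Omega_f$ because $\sigma_f \ll \omega_f$, via the Radon--Nikodym derivative and Lebesgue differentiation with respect to parabolic cubes. The sawtooth $\Omega_\ast$ is again a parabolic Lipschitz graph domain on which caloric measure is $A_\infty$ with respect to surface measure. By \cite{Lew-Mur-Mem} its graph is therefore regular, so a Carleson estimate for $\hat\beta^2\,dr/r$ holds on it. Since $\hat\beta$ for $f$ and for the sawtooth graph agree up to small scales on the contact set, integrating the Carleson estimate gives $\int_0^1\hat\beta^2\,dr/r<\infty$ a.e.\ on the contact set. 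Exhausting then gives (b).

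\emph{(b) $\Rightarrow$ (c).} Let
\[
E_M=\Bigl\{\Xbf:\int_0^1\hat\beta(f,\Xbf;r)^2\,\frac{dr}{r}\le M\Bigr\},
\]
and cover $E_M$ up to a null set by pieces $F$ of small diameter on which the tail $\int_0^{\delta}\hat\beta^2\,dr/r$ is uniformly small. On such a piece we would build a regular Lipschitz $g \ge$ or $= f$ on $F$ by a stopping-time or Whitney extension. On each Whitney cube $Q$ of $\R^n\setminus F$ we take the best approximating time-independent plane $S_Q$, and glue these planes with a partition of unity. The hard part is verifying $D_t^{1/2} g \in \text{BMO}_p$. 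For this we would use the characterization of regular Lipschitz graphs by a Carleson condition on $\hat\beta$-type coefficients (the parabolic Dorronsoro-type estimate). The $\hat\beta$ coefficients of $g$ are then controlled by those of $f$ at points of $F$ (summable by construction) and by plane-tilting differences, which telescope by the square-Dini condition. This step is the main obstacle, especially the half-time-derivative control; if the Dorronsoro characterization is not available, we would attempt to prove the Carleson bound for $D_t^{1/2} g$ directly through Littlewood--Paley pieces.
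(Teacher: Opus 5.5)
\textbf{The main gap is in your (a) $\Rightarrow$ (b), the hard direction.} You assert that a sawtooth $\Omega_\ast$ over the set where $\omega_f$ has controlled density is a parabolic Lipschitz graph domain whose caloric measure is $A_\infty$ with respect to its surface measure, and then cite \cite{Lew-Mur-Mem} to get regularity. That citation runs the wrong way: Lewis--Murray prove regular $\Rightarrow A_\infty$, while $A_\infty \Rightarrow$ regular is \cite{BHMN1}.

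More seriously, nothing in your argument gives $A_\infty$ for $\Omega_\ast$. There is no parabolic Dahlberg theorem; this is the Kaufman--Wu phenomenon \cite{KW-counter}. Density bounds for $\omega_f$ on the contact set do not control caloric measure on the tent part of $\partial\Omega_\ast$. For the natural choices of tents the claim can fail outright:
the graph of $f+\theta\dist(\cdot,E)$ carries the small-scale roughness of $f$ off $E$;
an envelope of cones reduces (e.g.\ when $f$ is constant on $E$) to $\theta\dist(\cdot,E)$, which the paper notes is not regular in general.
So by \cite{BHMN1} such sawtooths need not have $A_\infty$ caloric measure.

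The paper never makes such a claim. Its route is as follows.
A stopping time on $\omega(Q)/|Q|$ (Lemma~\ref{lemma:stopping-lemma}) gives a closed $E_k$ with $|E_k|\ge(1-1/k)|Q_0|$ over which $S_{E_k,\theta}$ is an $M$-good Green subdomain.
The competitor graph is built from level sets of the adjoint Green function: $\psi_{E,Q_0}(\xbf)=\varphi_{Q_0}(\xbf)\,f(h_E(\xbf);\xbf)$, where $G(f(r;\xbf),\xbf)=r$ and $h_E$ is the regularized distance. Off $E$ this graph is smooth by interior estimates.
Then $D_t^{1/2}\psi_{E,Q_0}\in BMO$ follows from the square function bounds of Lemma~\ref{lemma:square-function} and the case analysis of Lemma~\ref{lemma:bmo-estimate}.
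This yields (c) directly, and (b) follows from (c) by the Marcinkiewicz-integral estimate of Section~\ref{sec:c-to-b}. That estimate should also replace your claim that the $\hat\beta$'s of $f$ and of the approximating graph ``agree up to small scales''. They do not agree; they differ by a term controlled by $\fint_{Q(\xbf,r)}(\dist(\zbf,E)/r)^2\,d\zbf$, whose $dr/r$-integral is finite at a.e.\ point of $E$.

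\textbf{Your (c) $\Rightarrow$ (a) is circular as written.} Having rightly discarded $\max(f,f_j)$, which is not regular, you fall back on pieces ``supplied by the Dini condition''. In your cycle, however, (b) is only obtained after (a). The fix is to prove (c) $\Rightarrow$ (b) first, as the paper does. Then run your steps (ii)--(iv) with $g=f_K+c_nLh_K$, where $K$ is compact with $\sup_{\xbf\in K}\int_0^1\check\beta(f,\xbf;r)^2\,dr/r\le M$ and $f_K$ is the selective mollification \eqref{def:selective-mollification}. This is precisely the paper's (b) $\Rightarrow$ (a).

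\textbf{Your (b) $\Rightarrow$ (c) plan is essentially sound.} It is the content of Lemma~\ref{lemma:btoa-betasformollified}, where constants $f(c_Q)$ suffice instead of best planes. The mechanism, however, is not telescoping of plane tilts. For $r\gtrsim\dist(\ybf,K)$ one bounds $\check\beta(f_K,\ybf;r)$ by $\check\beta(f,\ybf^\ast;Cr)$ at a nearest point $\ybf^\ast\in K$, which the uniform Dini bound on $K$ controls pointwise, plus the same Marcinkiewicz term. Smaller scales use only the Lipschitz bounds on derivatives of $f_K$.
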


This problem has an extensive history. Following Dahlberg's celebrated result \cite{Dahl-L2}, where he showed the harmonic and surface measure were (quantitatively) mutually absolutely continuous in Lipschitz domains (with no additional assumptions), it was conjectured by Hunt (see \cite[p. 2]{KW-counter}) that the same should be true for parabolic Lipschitz domains. This was quickly disproven by Kaufman and Wu \cite{KW-counter}, where they showed for certain parabolic Lipschitz domains caloric, adjoint caloric and surface measure can all be singular with respect to one another. Here their construction involved a Weierstrass type function. Prior to the work of Kaufman and Wu, Wu \cite{Wu-split} had shown that if $E$ a subset of $\Gamma$ of zero surface measure then $E$ can be decomposed into two sets $E_\pm$ where $E_+$ is a null set of caloric measure and $E_-$ is a null set of adjoint caloric measure. In other words, null sets of surface measure are either inaccessible forward in time or backwards in time. These results illustrate the delicate interplay between the caloric measure and surface measure.

The first sufficient conditions to add to the function generating the graph were provided by Lewis, Murray and Silver \cite{Lew-Mur-Mem,Lew-Sil}. In \cite{Lew-Sil}, the closest classical work to our result, the authors study when $\Omega \subset \mathbb{R}^2$ is ``above" a (1/2)-H\"older function, $f(t)$. They show that $f(t)$ satisfies a ({\it global}) Dini condition on its modulus of continuity then surface measure and caloric measure are mutually absolutely continuous. This strong condition, when compared to Carleson type conditions described below, was inspired by \cite{CFK, FJK}. Nonetheless, in the spirit of \cite{CFK}, Lewis and Silver show the condition in \cite{Lew-Sil} is sharp in the sense that given a modulus of continuity, $\alpha$, which fails the Dini type condition there is a function $f'$ whose modulus of continuity, $\alpha'$, still fails their Dini condition, has $\alpha' \le \alpha$ and the caloric measure and surface measure are singular for the domain above the graph of $f'$. 

In the work of Lewis and Murray \cite{Lew-Mur-Mem}, the authors studied the {\it quantitative} absolute continuity of surface and caloric measure. In particular, Lewis and Murray showed that if $f$ as in Theorem \ref{main.thrm} satisfies the quantitative condition that it is a {\it regular} Lip(1,1/2) function (see Definition \ref{def:regular-graph}) then the caloric measure is an $A_\infty$ weight with respect to surface measure. In particular, this is equivalent to the existence of $p \in (1,\infty)$ so that the Dirichlet problem is solvable for $L^p$ data with accompanying $L^p$ estimates (on the non-tangential maximal function), see e.g. \cite{GH-Ainf}. Very recently, Hofmann, Martell and Nystr\"om and the first named author \cite{BHMN1} showed that the additional assumption that the function defining the graph is ``regular" is, in fact, necessary for this $L^p$-solvability. Our work should be thought of as a qualitative version of \cite{BHMN1} and Theorem \ref{main.thrm} (a) $\implies$ (b) relies heavily on the analysis there. 

We should also remark that Theorem \ref{main.thrm} provides further evidence that the ``correct" notion of parabolic rectifiablity, at least for the purposes of studying parabolic potential theory, should take into account Dini-type conditions like \eqref{Db.eq}. In fact, while writing this manuscript Hallgren, Koirala and Ma \cite{HKM-sing} showed that the singular set of caloric functions in $\mathbb{R}^{n+1}$ is contained in a countable collection of {\it regular} Lip(1,1/2) functions, up to a null set of $(n+1)$-dimensional parabolic Hausdorff measure. Their aims are considerably different than ours; however, the aspect of their proof which shows the graphs are regular is similar to our Theorem \ref{main.thrm} (c) $\implies$ (b) $\implies$ (a). This is because there is only one reasonable way to carry out this analysis, that is, following the arguments of David and Semmes \cite{DS-Ast} or the simpler but essentially equivalent argument in \cite{BHHLN-CME}. 

In light of the present manuscript and \cite{HKM-sing} a natural question to ask is whether one can prove as in \cite{Tolsa-partI, AT-partII} or \cite{ENV} that $(b) \iff (c)$ in significantly greater generality than the graphical setting. Moreover, one may also ask in what generality (b) or (c) $\implies$ (a), that is, parabolic analogues of \cite{AHMMMTV, ABaHM, ABHM}. The authors have not investigated these questions.

We also show that the proof that (b) $\implies$ (a) can be modified to prove the following theorem.

\begin{theorem}\label{partconv.thrm}
Let $f: \R^{n} \to \R$ be a parabolic Lipschitz function.  
If for some $\Ybf \in \Omega$
\[
\int_0^1 \hat{\beta}(f,\Xbf;r)^2 \frac{dr}{r} < \infty, \quad \text{for $\omega_f^{\Ybf}$ almost every $\Xbf \in \partial \Omega_f$},
\]
then $\omega_f^{\Ybf} \ll \sigma_f$.
\end{theorem}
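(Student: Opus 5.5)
We plan to localize the Dini-$\beta$ condition to sets of large caloric measure, build a regular Lip$(1,1/2)$ majorant/approximant there, and then use the Lewis--Murray $A_\infty$ theorem together with a maximum principle comparison to transfer absolute continuity.

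\textbf{Reduction.} Let $F\subset\partial\Omega_f$ be the set where $\int_0^1\hat\beta(f,\Xbf;r)^2\,dr/r<\infty$. By hypothesis $\omega_f^{\Ybf}(\partial\Omega_f\setminus F)=0$. Let $E\subset\partial\Omega_f$ be a Borel set with $\sigma_f(E)=0$. Then it suffices to show $\omega_f^{\Ybf}(E\cap F)=0$.

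\textbf{Decomposition of $F$.} Write $F=\bigcup_{N,k} F_{N,k}$, where
\[
F_{N,k}=\Big\{\Xbf\in F\cap Q(\mathbf{z}_k;1): \textstyle\int_0^1\hat\beta(f,\Xbf;r)^2\,\tfrac{dr}{r}\le N\Big\},
\]
with $\{\mathbf{z}_k\}$ a countable cover. It is enough to prove $\omega_f^{\Ybf}(E\cap F_{N,k})=0$ for each $N,k$. Fix such a set $G=F_{N,k}$. By inner regularity we may also assume $G$ is compact.

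\textbf{Step 1: a regular graph through $G$.} Following the David--Semmes / \cite{BHHLN-CME} argument used for (b)$\implies$(c), we construct a regular Lip$(1,1/2)$ function $g$ with $g\le f$ (so $\Omega_f\subset\Omega_g$) and $g=f$ on the projection of $G$. Concretely:
\begin{itemize}
\item Perform a Whitney decomposition of the complement of the projection of $G$.
\item On each Whitney cube, replace $f$ by an affine approximant, in the parabolic sense, coming from the minimizing planes in $\hat\beta$, shifted down by a constant multiple of the cube's size.
\item Glue the pieces with a partition of unity.
\end{itemize}
The uniform bound $\int_0^1\hat\beta^2\,dr/r\le N$ on $G$ yields a Carleson measure estimate for the $\beta$-numbers of $g$. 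This gives $D_{1/2}^t g\in$ parabolic BMO, via the characterization (stated earlier in the paper) of regular Lip$(1,1/2)$ graphs by Carleson conditions on $\beta$-numbers. To guarantee $g\le f$ despite the Lip$(1,1/2)$ oscillation, the downward shift must be taken larger than the local Lipschitz constant times the cube's size.

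\textbf{Step 2: transfer via the maximum principle.} Since $\Omega_f\subset\Omega_g$ and $\partial\Omega_f\cap\partial\Omega_g\supset G$, the maximum principle gives
\[
\omega_f^{\Ybf}(A)\le\omega_g^{\Ybf}(A)\quad\text{for Borel } A\subset G.
\]
To verify this, compare the two solutions with data $\chi_A$, or approximate by continuous data. By \cite{Lew-Mur-Mem}, $\omega_g^{\Ybf}$ is locally $A_\infty$ with respect to $\sigma_g$, restricted to times before that of $\Ybf$. Moreover, $\sigma_g=\sigma_f$ on $G$, since the graphs coincide there and surface measure is the pushforward of Lebesgue measure by the graph map. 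Hence $\sigma_f(E\cap G)=0$ implies $\omega_g^{\Ybf}(E\cap G)=0$, and therefore $\omega_f^{\Ybf}(E\cap G)=0$.

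\textbf{Main obstacle.} The hard step is Step 1. We must control the BMO norm of $D_{1/2}^t g$ using only pointwise Dini bounds on $G$, so that the Whitney extension stays regular while remaining below $f$. The Carleson packing of $\hat\beta^2$ over cubes centered near $G$ must be checked carefully against the error terms introduced by the partition of unity and the downward shift. We would first try the argument of \cite{BHHLN-CME}, with $\hat\beta$ in place of the Lipschitz-case $\beta$. The alternative is simply to invoke the construction already used for (b)$\implies$(c): it applies verbatim to any compact set carrying a uniform Dini bound, whether or not $\sigma_f(G)>0$, because the construction never uses the size of $G$. In that case only the inequality $g\le f$ needs to be added.
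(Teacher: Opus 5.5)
Your proposal is correct and follows the paper's proof. Both arguments restrict to a compact set $K$ with a uniform Dini bound, build a regular $Lip(1,1/2)$ minorant of $f$ that agrees with $f$ on $K$, compare caloric measures for $\Omega_f \subset \Omega_{\tilde f}$ via the maximum principle, and conclude with Lewis--Murray. The differences are cosmetic: the paper uses the selective mollification $f_K$, which puts constants $f(c_Q)$ on Whitney cubes instead of your affine pieces, with Lemma~\ref{lemma:btoa-betasformollified} supplying the Carleson estimate. It then sets $\tilde f = f_K - c_n L h_K$, reusing the construction from its (b)$\implies$(a) proof (not (b)$\implies$(c)) with the sign of $c_n L h_K$ flipped, just as you anticipated.
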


The fact that Theorem \ref{partconv.thrm} is not purely geometric and takes into account the measure $\omega^{(Y,s)}$ is of course unfortunate, but common when trying to prove $\omega \ll \sigma$. Indeed, to the best of our knowledge results of this type (even in the elliptic setting) usually fall into two categories:
\begin{itemize}
\item  The geometric conditions placed on sets are so strong they imply quantitative absolute continuity in the form of an $A_\infty$-type condition, or
\item information of the form $\omega \ll \sigma$ can only be inferred on the common boundary of the original domain $\Omega$ with a suitable `nicer' approximating domain or surface.
\end{itemize}
Theorem \ref{partconv.thrm}, like the results of Azzam, Akman, Mourgoglou and Wu \cite{AAM,Wu-CADapprox} in the elliptic setting, fall into the second category. Results due to  Hofmann, Lewis, Nystr\"om, Murray and Silver \cite{HL-ann, HLN1, HLN2, Lew-Mur-Mem, Lew-Sil} or, more generally \cite{BHHLN-BPapprox}, which is inspired by the result of Nystr\"om and Str\"omqvist \cite{NS} and the elliptic result of David and Jerison \cite{DJ}, fall into the first category. 

The scheme of our the paper is as follows. In Section \ref{sec:notation}, 
we provide the necessary background definitions and reduce the theorem to a version stated in flat $\rn$. 
In Sections \ref{sec:a-to-c}, \ref{sec:b-to-a} and \ref{sec:c-to-b}, 
we prove the implications asserted in the coordinate version of Theorem \ref{main.thrm}, one in each.
The proof of Theorem \ref{partconv.thrm} is included in Section \ref{sec:b-to-a}. 
While the first implication,
presented in Section \ref{sec:a-to-c} is a streamlined presentation of the reasoning in \cite{BHMN1} and \cite{BFHH},
the other implications of the argument involve the use of the condition \eqref{Db.eq} and are the second contribution of this paper.

\section{Preliminaries}
\label{sec:notation}

\subsection{Notation}
We define the parabolic distance between two space-time points in $\mathbb{R}^{n+1}$ as
\begin{equation}
\label{parabolic-distance} 
|(X,t)-(Y,s)|  = \max(|X - Y|_{\infty},|t - s|^{1/2})
\end{equation}
and similarly between two space-time points in $\mathbb{R}^n$. 
Given $r > 0$ and $\Xbf \in \R^{n+1}$,
we denote a metric ball with respect to \eqref{parabolic-distance} by $Q(\Xbf,r)$.
We use the notations $\ell(Q(\Xbf,r)) = 2r$ and $c(Q(\Xbf,r)) = \Xbf$ for the sidelength and the center.
We use the same symbols for parabolic distances and cylinders in $\R^{n}$ when there is no risk of confusion. 
Unless explicitly stated otherwise, 
all further metric notions such as distances and diameters are understood with respect to the distance \eqref{parabolic-distance}.

\begin{definition}[Parabolic Hausdorff Measure]
Given a positive number $\eta$ and $\delta>0$,
we set for $E\subseteq \mathbb R^{n+1}$ 
\[ 
\cH_{\text{p},\delta}^\eta(E):= \inf \sum_k \diam(E_k)^\eta\,,
\]
where the infimum runs over all countable coverings of $E$, $\{E_k\}_{k \in \N}$, where the diameter with respect to the parabolic distance $\diam(E_k)\leq \delta$ for all $k$. We then define
\[
\cH_{\text p}^\eta (E) := \lim_{\delta\to 0^+} \cH_{\text{p},\delta}^\eta(E)\,.
\]
As for classical Hausdorff measure, $ \cH_{\text{p}}^\eta$ is a Borel regular measure.
We refer the reader to \cite[Chapter 2]{EG} for a discussion of the basic properties of standard
Hausdorff measure, which adapt readily to treat $  \cH_{\text{p}}^\eta$.
In particular, one obtains a measure equivalent to   $\cH_{\text{p}}^\eta$ if one defines
$\cH_{\text{p},\delta}^\eta$ in terms of coverings by arbitrary sets of parabolic diameter at most $\delta$, rather than cubes.
\end{definition}
	
\begin{definition}[Parabolic Lipschitz Functions]
\label{def:parabolic-lipschitz-f}
We say a function $f: \mathbb{R}^n \to \mathbb{R}$ is \textit{parabolic Lipschitz}, or $Lip(1,1/2)$, if there exists $L \ge 0$ such that
\[|f(x,t) - f(y,s)| \le L(|x - y| + |t - s|^{1/2}), \quad \forall (x,t), (y,s) \in \mathbb{R}^n.\]
If $f$ is parabolic Lipschitz we write $\|f\|_{Lip(1,1/2)}$ the infimum over all such $L$ and call this the parabolic Lipschitz or $Lip(1,1/2)$ constant of $f$.
\end{definition}

\begin{definition}[Parabolic Lipschitz Graph Domains]
\label{def:parabolic-lipschitz-g}
Given $f$ as in Definition \ref{def:parabolic-lipschitz-f},
we define the \textit{graph function of $f$}, $F: \mathbb{R}^n \to \mathbb{R}^{n+1}$, as
\[F(x,t) := (f(x,t),x,t).\] 
A set of the form $\partial \Omega_f = \{F(x,t): (x,t) \in \R^{n}\}$ is said to be a \textit{parabolic Lipschitz graph} if $f$ is parabolic Lipschitz.
The domain $\Omega_f \subset \R^{n+1}$ is a \textit{parabolic Lipschitz graph domain} if $\partial \Omega_f$ is a parabolic Lipschitz graph. 
\end{definition}

\begin{definition}[Parabolic Surface Measure]
\label{def:parabolic-lipschitz-sfm}
Given $f$ as in Definition \ref{def:parabolic-lipschitz-f},
we define the parabolic surface measure of $\partial \Omega_f$ as 
\[
\sigma_f := \mathcal{H}_{\text{p}}^{n+1}\rvert_{\partial \Omega_{f}}.
\]
\end{definition}

Finally,
relative to a reference parabolic cube $Q \subset \R^{n}$,
we define the Whitney region and Carleson box
\begin{align*}
W_f (Q) &= \{\Xbf \in Q(F(c(Q)),3L\ell(Q)): |\xbf - c(Q)| \le \ell(Q), \ x_0 - f(c(Q)) > 2L\ell(Q) \}, \\
\mathcal{R}_f(Q) &= \{\Xbf \in Q(F(c(Q)),3L\ell(Q)) \cap \Omega: |\xbf - c(Q)| \le \ell(Q) \}.
\end{align*} 
Moreover,
we know there exists a constant $A = A(n,L)$ such that for the good corkscrew point 
\[
{\trm crk}_f(Q) = F(c(A))  + (A \ell(Q),0,A \ell(Q)^{2})
\]
various Lemmas below (quoted from the literature) hold true.
Each of those references gives a lower bound on $A$ with the above mentioned dependencies,
and we may choose the largest of those.

\subsection{Half order time derivatives}
\label{subsec:half-order-time-derivative}
The half order time derivative $D_{t}^{1/2}$ of a function on $\R^{n}$ is defined as $\partial_t I$
where $I$ is a parabolic Riesz potential of order one, 
smoothing one order in space and a half order in time.
The potential $I$ has a convolution kernel $K(\xbf)$ whose Fourier transform satisfies 
\[
\widehat{K}(\xi,\tau) =   (\sqrt{|\xi|^{4} + 4\tau^{2}} + |\xi|^{2} )^{-1/2} \sim (|\xi| + |\tau|^{1/2})^{-1}
\]
so that by standard computations one may check $K$ is subject to the kernel estimates 
\[
|\partial^{\alpha}  \partial_t^{k} K(\xbf)| \le C_{\alpha,k}  (|x| + |t|^{1/2})^{-n-|\alpha|-2k}, 
\]
for $\alpha \in \N^{n-1}$, $k \in \N$ and $\xbf \in \R^{n}$.

\begin{definition}
\label{def:regular-graph}
A parabolic Lipschitz function $f : \R^{n} \to \R$ is said to be \textit{regular} if 
\[
\no{D^{1/2}_t f}_{BMO} := \sup_{\substack{\xbf \in \R^{n} \\ r >0}} \inf_{c \in \R} \fint_{Q(\xbf,r)}|D_{t}^{1/2}f(\ybf) - c| \, d \ybf < \infty.
\] 
\end{definition}

The regularity of a parabolic Lipschitz function can be tested by means of a condition involving $\beta$-numbers.
The following proposition is known to experts in the area (see the remark following it).
\begin{proposition}
Let $Q$ be a parabolic cube.
A parabolic Lipschitz function $f : \R^{n} \to \R$ with $\supp f \subset Q$ is \textit{regular} if and only if 
\[
M = \sup_{\substack{\xbf \in \R^n \\ r >0}} \frac{1}{r^{n+1}} \int_{0}^{r}\int_{Q(\xbf,r)} \check{\beta}(f,\ybf;r)^{2} \frac{d\ybf dr}{r} < \infty.
\]
\end{proposition}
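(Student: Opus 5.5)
The plan is to deduce the proposition from a parabolic analogue of Dorronsoro's theorem, which characterizes the BMO norm of a "full order one derivative" through a Carleson condition on $L^2$ beta numbers. Here $\check{\beta}(f,\ybf;r)$ is taken to be the normalized $L^2$ distance of $f$ on $Q(\ybf,r)$ to the affine functions of $x$ alone, constant in $t$. Set $\mathbb{D} := |\nabla_x| + D_t^{1/2}$, more precisely the operator with symbol $|\xi| + |\tau|^{1/2}$, so that formally $f = I(\mathbb{D} f)$ up to a harmless modification of $I$. Since $f$ is parabolic Lipschitz, $\nabla_x f \in L^\infty \subset BMO$. Hence $f$ is regular if and only if $\mathbb{D} f \in BMO$, with comparable norms up to $\|f\|_{Lip(1,1/2)}$. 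Because $\supp f \subset Q$, all of $f$, $\nabla_x f$ and $D_t^{1/2}f$ lie in $L^2$. This makes every Fourier-side manipulation legitimate and lets us freely use the $H^1$--$BMO$ duality.

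\textbf{Step 1 ($M<\infty \Rightarrow$ regular).} Fix a parabolic Littlewood--Paley family $Q_s$ with smooth, rapidly decaying kernels $s^{-n-1}\psi((x,t)/(s,s^2))$ whose moments vanish up to order one in $x$ and order zero in $t$. Then $Q_s$ annihilates every affine function of $x$ that is constant in $t$. I also arrange that $Q_s$ factors as $s\,\widetilde Q_s \mathbb{D}$ with another admissible family $\widetilde Q_s$. For any such affine $a$ we get
\[
|s\,\widetilde Q_s \mathbb{D} f(\ybf)| = |Q_s(f-a)(\ybf)| .
\]
Choosing $a$ optimal on dilated cubes and summing the kernel tails over dyadic annuli gives the pointwise bound
\[
|\widetilde Q_s \mathbb{D} f(\ybf)|^2 \lesssim \sum_k 2^{-k\epsilon}\, \check{\beta}(f,\ybf;2^k s)^2 .
\]
Integrating over Carleson boxes bounds the Carleson norm of $|\widetilde Q_s\mathbb{D}f|^2\,\frac{d\ybf\, ds}{s}$ by $M$. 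The large-scale terms are handled by $\check\beta \lesssim L$ and the compact support. By the standard Carleson-measure characterization of BMO (via duality with $H^1$ and a Calder\'on reproducing formula), this yields $\mathbb{D} f \in BMO$.

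\textbf{Step 2 (regular $\Rightarrow M<\infty$).} Let $h = \mathbb{D} f \in BMO\cap L^2$ and fix $Q_0 = Q(\xbf,r)$. Split
\[
h = (h-h_{Q^*})1_{Q^*} + (h-h_{Q^*})1_{(Q^*)^c} + h_{Q^*},
\]
where $Q^* = 8Q_0$. The constant $h_{Q^*}$ contributes only a function annihilated by the relevant structure, or an affine-in-$x$ term, and can be discarded.

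For the local part $f_1 = I((h-h_{Q^*})1_{Q^*})$, I bound $\check{\beta}(f_1,\ybf;s)^2$ by $s^{-2}$ times the mean oscillation of $f_1$ minus its first-order $x$-Taylor polynomial at scale $s$. Plancherel then gives
\[
\int_0^\infty\!\!\int \check{\beta}(f_1,\ybf;s)^2\, \frac{d\ybf\, ds}{s} \lesssim \int |\widehat{h_1}|^2 \int_0^\infty \min\bigl(s(|\xi|+|\tau|^{1/2}),\,(s(|\xi|+|\tau|^{1/2}))^{-1}\bigr)^2 \frac{ds}{s} \lesssim \|h_1\|_2^2 .
\]
By John--Nirenberg, $\|h_1\|_2^2 \lesssim r^{n+1}\|h\|_{BMO}^2$.

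For the far part $f_2 = I((h-h_{Q^*})1_{(Q^*)^c})$, I use the kernel bounds on $\partial^\alpha\partial_t^k K$ with $|\alpha|=2$ and with $k=1$. On $2Q_0$ these give
\[
|\nabla_x^2 f_2| + |\partial_t f_2|^{1/2\cdot 2}\cdot r \lesssim r^{-1}\sum_k 2^{-k}\,\fint_{2^kQ^*}|h-h_{Q^*}| \lesssim r^{-1}\|h\|_{BMO},
\]
where the last step uses the usual telescoping $|h_{2^kQ^*}-h_{Q^*}|\lesssim k\|h\|_{BMO}$. Subtracting the first-order $x$-Taylor polynomial of $f_2$ then yields $\check{\beta}(f_2,\ybf;s) \lesssim (s/r)\|h\|_{BMO}$ for $s\le r$. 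This is square-summable in $ds/s$, so it also contributes at most $r^{n+1}\|h\|_{BMO}^2$. Together with the local part this gives $M \lesssim \|h\|_{BMO}^2 + L^2$.

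\textbf{Main obstacle.} I expect the far-part estimate in Step 2 to be the delicate point. Because $I$ is only of order one, the kernel of $I$ itself decays like $|\xbf|^{-n}$, and the tail integrals converge only after differentiating twice in $x$ or once in $t$. So I must verify that the second-order Taylor remainder in $x$ and the $t$-increment over time intervals of length $s^2$ are both controlled with the claimed factor $s/r$. The subtlety is that $\partial_t f_2$ is bounded at the level $r^{-2}\cdot r\|h\|_{BMO}$, so the $t$-increment on $Q(\ybf,s)$ is $\lesssim (s^2/r)\|h\|_{BMO}$, which after normalizing by $s$ is again $(s/r)\|h\|_{BMO}$. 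I would also double-check the rigour of writing $f = I\mathbb{D} f$ modulo constants, which the compact support of $f$ is there to guarantee.
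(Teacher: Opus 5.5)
The paper gives no proof of this proposition. It cites \cite{HLN1,HLN2} and \cite[Lemma 5.5]{BHHLN-corona} for $M<\infty\Rightarrow$ regular. For the converse it combines the Carleson estimate for $D_{par}$ in \cite[Section 5]{Hof-SIO} with parabolic Riesz transforms on $BMO$ and \cite[(0.19)]{HL-ann}. Your plan, a direct parabolic Dorronsoro theorem, is a reasonable self-contained substitute; your Step 2 is essentially Hofmann's estimate. As written, though, it has a genuine gap, caused by your choice of $\mathbb{D}$ with symbol $|\xi|+|\tau|^{1/2}$. This symbol is not smooth off the origin: it has kinks along $\{\xi=0\}$ and $\{\tau=0\}$. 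Three steps fail because of this.

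\emph{The inverse is not a harmless modification of $I$.} By subordination, the kernel of $(|\xi|+|\tau|^{1/2})^{-1}$ is $\int_0^\infty P_\lambda(x)S_\lambda(t)\,d\lambda$. Here $P_\lambda$ is the Poisson kernel on $\R^{n-1}$ and $S_\lambda(t)=\lambda^{-2}S_1(t/\lambda^2)$ is the $\tfrac12$-stable density. Since $S_1(0)>0$, this integral diverges logarithmically on $\{t=0,\ x\neq 0\}$. Its $t$-derivative is then not locally integrable there, so your far-part bound on $\partial_t f_2$ is unavailable. The kernel estimates you quote hold only for the paper's smooth $\widehat K$.

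\emph{The factorization $Q_s=s\widetilde Q_s\mathbb{D}$ cannot have both families nice.} If $\widetilde\psi$ is a non-degenerate Schwartz kernel, then $\widehat{\widetilde\psi}(0,\tau)\neq0$ for some $\tau$. Hence $\psi$ decays only like $|x|^{-n}$ in $x$ and $\int|\psi(\xbf)||x|\,d\xbf=\infty$. The first $x$-moments are then undefined, and the annular sum produces no factor $2^{-k\epsilon}$. If instead $\psi$ is Schwartz, $\widetilde\psi$ inherits the kinks, and the Carleson-to-$BMO$ step for it is no longer standard.

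\emph{The reduction ``$\nabla_xf\in L^\infty\subset BMO$, hence regular $\iff \mathbb{D}f\in BMO$'' is unjustified.} Here $|\nabla_x|=\sum_j R^x_j\partial_{x_j}$ with purely spatial Riesz transforms. These do not map $L^\infty$ into parabolic $BMO$: apply $R^x_j$ to the atom $\phi(y)\chi(s)$ with $\int\phi\neq 0=\int\chi$, and the result decays like $|y|^{-n}$ in $y$, so it is not integrable. For Lip$(1,1/2)$ functions the $|\nabla_x|$ part can be rescued by a direct computation using the time H\"older bound, but not by this reasoning. Moreover, comparing $|\tau|^{1/2}$ with the paper's $D_t^{1/2}=\partial_t I$, whose symbol $i\tau\widehat K$ is smooth, goes through non-smooth degree-zero multipliers. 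That is exactly the nontrivial content of \cite[(0.19)]{HL-ann}, where the Lipschitz hypothesis is really used.

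The repair is to take $\mathbb{D}:=I^{-1}$, with symbol $\widehat K^{-1}=(\sqrt{|\xi|^4+4\tau^2}+|\xi|^2)^{1/2}$, which is smooth and parabolically homogeneous of degree one off the origin. Then $f=I(\mathbb{D}f)$ holds exactly, and your Step 2 goes through as written. With $S=\sqrt{|\xi|^4+4\tau^2}$ one has $\widehat K^{-1}=\widehat K|\xi|^2(1+|\xi|^2/S)+(i\tau\widehat K)(-4i\tau/S)$, that is, $I^{-1}=\sum_j T_j\partial_{x_j}+T_0\,\partial_t I$. Also $\partial_t I=(\partial_t I^2)I^{-1}$, where $T_j$, $T_0$ and $\partial_t I^2$ have smooth degree-zero symbols. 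Parabolic Calder\'on--Zygmund theory then gives $\no{D_t^{1/2}f}_{BMO}\lesssim \no{I^{-1}f}_{BMO}\lesssim L+\no{D_t^{1/2}f}_{BMO}$. Choosing $\widehat{\widetilde\psi}$ supported in a parabolic annulus makes both $\widetilde Q_s$ and $Q_s=s\widetilde Q_sI^{-1}$ Schwartz with all moments vanishing, so your pointwise bound in Step 1 holds.

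Finally, drop the claim that compact support puts $D_t^{1/2}f$ in $L^2$. A Lip$(1,1/2)$ function is only $C^{1/2}$ in $t$, not $H^{1/2}$; take $\eta(x)w(t)$ with $w$ a cut-off of $\sum_k 2^{-k}\cos(4^k t)$. In Step 1 such integrability is essentially what is being proved. Instead, truncate the reproducing formula to $s\in(\varepsilon,1/\varepsilon)$, bound the truncations uniformly in $BMO$ by tent-space duality, and pass to a weak-$*$ limit.
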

\begin{remark}
It can be shown from \cite{HLN1,HLN2} (see \cite[Lemma 5.5]{BHHLN-corona}) that for $M$ above it holds
\[\no{D^{1/2}_t f}_{BMO} \le L + M^{1/2},\]
where $L$ is the parabolic Lipschitz constant of $f$. Deducing the bound 
\[M^{1/2} \le \no{D^{1/2}_t f}_{BMO} \le L\]
can be found in \cite[Section 5]{Hof-SIO}; however, we should note that in that manuscript Hofmann works with objects called $D_n$ and $D_{par}$, which are not $D_t^{1/2}$. In particular, in \cite[Section 5]{Hof-SIO} it is shown that $\| \mu_f \|_{\mathcal{C}}^{1/2}$ is bounded by $\|D_{par}\|_{BMO}$ up to a dimensional constant.  On the other hand, in \cite[Section 3]{Hof-SIO} it is observed that
\[D_{par}  = \sum_{j =1}^{n-1} \partial_{x_j}R_j + D_nR_n,\]
where $R_j$ are certain {\it parabolic} Riesz transforms, that is, the Fourier symbol of $R_j$ is  $\xi_j/\|(\xi, \tau)\|$ for $j \in \{1,2,\dots, n-1\}$ and the Fourier symbol of  $R_n$ is $\tau/\|(\xi, \tau)\|^2$ (here, the norm $\|\cdot\|$ is different, but equivalent to the one we use, so that the identity above holds). Since $R_j$ maps $BMO$ to $BMO$, it holds that
\[\|D_{par} f\|_{BMO} \lesssim \sum_{j = 1}^{n-1}\|\partial_{x_i} f\|_{BMO} + \|D_n f\|_{BMO} \lesssim \|\nabla  f\|_{L^\infty} + \|D_n f\|_{BMO}.\]
The final step is then to use 
\cite[Equation (0.19)]{HL-ann}, which states
\[ \|\nabla  f\|_{L^\infty} + \|D_n f\|_{BMO} \approx \|\nabla  f\|_{L^\infty} + \|D_t^{1/2} f\|_{BMO} .\] 
\end{remark}

\subsection{Reduction to flat coordinates}
\label{sec:reductions}
For the proof of Theorem \ref{main.thrm} and Theorem \ref{partconv.thrm},
we introduce the following objects related to a fixed parabolic Lipschitz function $f$.
We set for all $\Xbf_0 \in \Omega_f$
\begin{align}
\label{def:pullback-omega}
\omega^{\Xbf_0} &= \omega_{f}^{\Xbf_0} \circ F, \\
\label{def:pullback-sigma}
\sigma  &= \sigma_{f} \circ F .
\end{align}
Because $F$ is a bi-Lipschitz mapping $\R^{n} \to \partial \Omega_{f}$ relative to the parabolic distance,
we see that the zero sets of $\sigma$ are precisely the zero sets of $\sigma_{f}$.
Because of this,
absolute continuities $\omega_{f} \ll \sigma_f$ and $\sigma_{f} \ll \omega_{f}$ in the sense of Definition \ref{abscty.def}
are equivalent to the corresponding conditions stated in terms of $\omega^{\Xbf_0}$ and the Lebesgue measure on $\R^{n}$.
We also define the shifted versions of the Carleson and Whiteny regions (for a fixed $f$) as 
\begin{align*}
W (Q) &=  \{\Xbf: (f(\xbf)+x_0,\xbf) \in W_f(Q) \}, \\
\mathcal{R}(Q) &=  \{\Xbf: (f(\xbf)+x_0,\xbf) \in \mathcal{R}_f(Q) \}.
\end{align*} 

We define the coordinate beta numbers as 
\begin{equation}
\label{eq:def-coordinate-beta}
\check{\beta}(f,\xbf;r) = \inf_{A} \left( \fint_{Q(\xbf;r)} \left( \frac{|f(\ybf) - A(y)|}{r} \right)^2 \, d \ybf \right)^{1/2}
\end{equation}
where the infimum is taken over all affine mappings $A : \R^{n-1} \to \R$ with rational matrix and translation vector.
This function is measurable,
and we have the following relation between the different beta numbers.

\begin{proposition}
\label{prop:betas-pullback}
Let $f$ be parabolic $L$-Lipschitz.
There exists a dimensional constant $c_n$ such that 
\[
\hat{\beta}(f,F(\xbf);r) \le \check{\beta}(f,\xbf;r) \le (1 + c_n L) \hat{\beta}(f,F(\xbf);r)
\]
for all $\xbf \in \R^{n}$ . 
\end{proposition}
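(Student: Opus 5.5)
The plan is to prove the two inequalities separately, after two preliminary reductions. First, the infimum in \eqref{eq:def-coordinate-beta} over rational affine maps equals the infimum over all affine maps $A:\R^{n-1}\to\R$. This holds because the integrand depends continuously on the coefficients of $A$, by dominated convergence on the bounded cylinder. Second, I will read the average in \eqref{def:beta-number-intro} through the pullback \eqref{def:pullback-sigma}. That is, $\fint_{Q(\xbf;r)}\dots d\sigma_f$ means averaging $\dist_p(F(\ybf),S)$ over $\ybf\in Q(\xbf;r)$ against $\sigma=\sigma_f\circ F$. I will use that $\sigma$ is comparable to Lebesgue measure, and that with the normalization used here the two averages agree; this point should be checked against the normalization of $\mathcal H^{n+1}_p$.

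For the lower bound $\hat\beta\le\check\beta$, fix an affine $A$ and let $S_A=\{(A(x),x,t)\}$. This is an $n$-dimensional affine subspace invariant under time translations, so $S_A\in\mathcal S$. For $\ybf=(y,s)$, the point $(A(y),y,s)\in S_A$ differs from $F(\ybf)$ only in the $x_0$ coordinate. Hence $\dist_p(F(\ybf),S_A)\le|f(\ybf)-A(y)|$ for the max-type distance \eqref{parabolic-distance}. Squaring, averaging and taking the infimum over $A$ gives the claim.

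For the upper bound, fix $S\in\mathcal S$ that is nearly optimal for $\hat\beta$. I split into two cases according to the geometry of $S$.

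In the first case, $S=S_A$ is a graph with $|\nabla A|\le 2L$. Given $\ybf$, let $d=\dist_p(F(\ybf),S)$ and pick $(A(z),z,s')\in S$ realizing it, so that $|z-y|\le c_n d$ and $|A(z)-f(\ybf)|\le d$. Then
\[
|f(\ybf)-A(y)|\le |f(\ybf)-A(z)|+|\nabla A||z-y|\le (1+c_nL)\,d.
\]
Averaging this pointwise bound gives $\check\beta\le(1+c_nL)\hat\beta$.

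In the second case, $S$ is steep: either vertical (containing the $x_0$-direction) or a graph with $|\nabla A|>2L$. Here I do not compare with $A$ at all; I use a crude bound on each side.
\begin{itemize}
\item The constant $A\equiv f(\xbf)$ gives $\check\beta(f,\xbf;r)\le c_nL$, by the Lipschitz bound $|f(\ybf)-f(\xbf)|\le 2Lr$ on $Q(\xbf;r)$.
\item It then suffices to show $\hat\beta\ge c_n'>0$ for steep $S$, which yields $\check\beta\le c_nL\le (c_n/c_n')L\,\hat\beta$.
\end{itemize}
To prove $\hat\beta\ge c_n'$, let $e\in\R^{n-1}$ be the unit horizontal direction of steepest ascent of $S$ (or its horizontal normal in the vertical case). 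Along each line $\{y+\lambda e\}$, the graph of $f$ rises with slope at most $L$, while $S$ rises with slope more than $2L$ (infinite in the vertical case). Since $S$ is a graph over $(x,t)$ in the non-vertical case (it is all of $\{x\cdot e=\text{const}\}$ in the vertical case), the points of $S$ near $F(\ybf)$ lie at horizontal positions $z$ with $z\cdot e\approx y\cdot e+(f(\ybf)-A(y))/|\nabla A|$. Consequently $\dist_p(F(\ybf),S)\gtrsim_n |(y-y^*)\cdot e|$, where $y^*$ is the point on the segment where the two graphs cross; if no crossing exists, the lower bound only improves. Averaging $|(y-y^*)\cdot e|^2/r^2$ over the cylinder $Q(\xbf;r)$ gives a dimensional lower bound, since a slab of width $\epsilon r$ captures only a fraction $\lesssim\epsilon$ of the cylinder.

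The main obstacle is this steep case. The pointwise lower bound $\dist_p(F(\ybf),S)\gtrsim|(y-y^*)\cdot e|$ must hold uniformly in the slope of $S$ and without a factor of $L$ entering, otherwise the final constant would be worse than $1+c_nL$. I would first try the one-dimensional estimate along $e$-lines: a point of $S$ within distance $d$ of $F(\ybf)$ forces the graphs of $f$ and $S$, which have slopes $\le L$ and $>2L$ respectively, to come within about $d$ of each other at horizontal distance about $d$. Their vertical gap grows at rate $\ge L$ away from the crossing, so this localizes $y\cdot e$ within $O(d)$ of the crossing, as required.
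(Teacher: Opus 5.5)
Your lower bound $\hat\beta\le\check\beta$ and your gentle-slope case are correct, and they are the paper's arguments: vertical projection onto the graph plane for the first, and nearest point plus $|\nabla A|\,|z-y|$ for the second. The gap is in the steep case. With the threshold $|\nabla A|>2L$, your claim that $\hat\beta\ge c_n'$ for every steep $S$ is false when $L$ is small.

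Write $\hat\beta_S$ for the quantity under the infimum for a fixed $S$, and take $S=\{x_0=A(x)\}\times\R$ with $p=\nabla A$. Since the spatial part of the distance is the max norm, the distance to a hyperplane is exactly $\dist_p(F(\ybf),S)=|f(\ybf)-A(y)|/(1+\|p\|_1)$. In other words, $F(\ybf)$ can reach $S$ by a vertical move, and this is cheap when $|p|$ is small. Your one-dimensional estimate $|f(\ybf)-A(y)|\ge(|p|-L)\,|(y-y^*)\cdot e|$ therefore only gives $\dist_p(F(\ybf),S)\gtrsim\min(1,|p|)\,|(y-y^*)\cdot e|$.

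The error sits in your last paragraph. A vertical gap of size $O(d)$ that grows at rate at least $|p|-L\ge L$ localizes the crossing only within $O(d/|p|)$, not within $O(d)$. Concretely, take $f(y,s)=Ly_1$, $\xbf=0$ and $S=\{x_0=3Lx_1\}\times\R$. Then $\dist_p(F(\ybf),S)=2L|y_1|/(1+3L)$, so $\hat\beta_S=2L/(\sqrt3(1+3L))$, which is of order $L$ and not bounded below. As a result, for small $L$ your steep case yields only $\check\beta\le C_n\hat\beta$ with a dimensional constant. It does not give a constant of the form $1+c_nL$, which must tend to $1$ as $L\to0$.

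\textbf{Repair.} Put the threshold at $KL$ with $K$ a large dimensional constant; this is what the paper does when it discards $|p|\ge c_nL$ with $c_n$ large enough.
\begin{itemize}
\item For $|\nabla A|\le KL$, your pointwise argument gives the factor $1+\sqrt{n-1}\,KL$.
\item For $|\nabla A|>KL$, your crossing-point and Fubini argument, with the correct factor, gives $\hat\beta_S\ge c\min(1,|\nabla A|)$. For vertical $S$ it gives $\hat\beta_S\ge c$. In all these cases $\check\beta\le CL$.
\item If $|\nabla A|\ge 1$ (or $S$ is vertical), this gives $\check\beta\le (C/c)L\,\hat\beta_S$.
\item If $KL<|\nabla A|<1$, it gives $\check\beta\le \frac{C}{cK}\,\hat\beta_S\le\hat\beta_S$ once $K\ge C/c$.
\end{itemize}

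The paper gets the steep-plane lower bound differently, through the two-region estimate $\fint|f-A_{p,b}|^2/r^2\gtrsim|p|^2$ on $Q_p^{\pm}$. Your Fubini argument along $e$-lines is an acceptable substitute, and it has the advantage of treating vertical $S$ explicitly.

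Your normalization caveat is also warranted. $\sigma$ is only known to be comparable to Lebesgue measure, with constants depending on $L$. To get the constant exactly as stated, read the average in $\hat\beta$ as the Lebesgue average over $Q(\xbf;r)$; the paper's pointwise comparison of integrands implicitly does the same.
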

\begin{proof}
Fix $\xbf_0$ and $r_0$.
Without loss of generality we may assume $F(\xbf_0)=0$.

We first show $\check{\beta}(f,\xbf;r) \lesssim \hat{\beta}(f,F(\xbf);r)$.
Considering the plane
\[
H_{p,b} = \{\Xbf \in \R^{n+1}: x_0 = p \cdot x + b \}
\]
with $p=0$ and $b = 0$,
we see that  
\[
\check{\beta}(f,\xbf;r) \le L . 
\]
On the other hand,
for any plane $H_{p,b}$ with $p \in \R^{n-1}$ and $b \in R$,
we define the affine function $A_{p,b}(x) = p \cdot x + b$ 
and set 
\[
Q_p^{+} =  \{\xbf \in Q(0,1): x \cdot p > |p|/4 \}, \quad Q_p^{-} =  \{\xbf \in Q(0,1): x \cdot p < |p|/4 \} .
\]
Then 
\[
\inf_{\xbf,\ybf \in Q_p} |A_{p,b}(\xbf) - A_{p,b}(\ybf)| \ge |p|/2
\]
and we see that for all $\ybf \in Q_p^{\pm}$ and $\xbf \in Q_{p}^{\mp}$
\[
|f(\ybf) - A_{p,b}(y)| \ge |A_{p,b}(y)- A_{p,b}(x)| - |f(\ybf) - A_{p,b}(x)| \ge |p|/2 - L - |f(\xbf) - A_{p,b}(x)|
\]
so that if $|f(\xbf) - A_{p,b}(x)| \le |p|/2 - L$ for some $\xbf \in Q_p^{\pm}$,
then for all $\ybf \in Q_p^{\mp}$
\[
|f(\ybf) - A_{p,b}(y)| \ge |p|.
\]
Consequently, there exists $c_n$ large enough so that for $|p| \ge c_n L$
\[
\fint_{Q(0,r)} \frac{|f(\ybf) - A_{p,b}(y)|^{2}}{r^{2}} \, d \ybf \ge \frac{|Q_p^{\pm}|}{|Q(0,1)|} |p|^{2}
\]
and no such $H_{p,b}$ will do as a competitor against $H_{0,0}$.  

To conclude we need to prove a lower bound for $\dist(F(\ybf),H_{p,b})$
with $|p| \le c_n L$ and all $\ybf \in Q(0,r)$. 
Let $Q = Q(F(\ybf),R)$ be the maximal parabolic cube centred at $F(\ybf)$ whose interior does not meet $H_{p,b}$.
If $R = 0$, then $\check{\beta}(f,\xbf;r) = \hat{\beta}(f,F(\xbf);r) = 0$ and the claim holds.
If $R > 0$,
we pick $\Zbf \in \overline{Q} \cap H_{p,b}$ and compute 
\[
R 
= |\Zbf - F(\ybf)| 
= |(A_{p,b}(z) - f(\ybf),\zbf-\ybf)| 
\ge |A_{p,b}(y) - f(\ybf)| - |A_{p,b}(y) - A_{p,b}(z)| 
\ge |A_{p,b}(y) - f(\ybf)| - c_n L R .
\]  
Hence 
\[
\dist(F(\ybf),H_{p,b}) \ge 
|f(\ybf) - A_{p,b}(y)|/(1+c_n L) .
\]
As $\ybf$ was arbitrary,
we see that $\check{\beta}(f,\xbf;r) \le \hat{\beta}(f,F(\xbf);r)$ holds.

To prove the other direction,
it is enough to note that trivially 
\[
\overline{Q(F(\ybf),|A_{p,b}(y) - f(\ybf)|)} \cap H_{p,b} \ni \{(A_{p,b}(y), \ybf) \} \ne  \varnothing 
\]
even without any constraint on $p$ and $b$.
Consequently, $|A_{p,b}(y) - f(\ybf)| \ge \dist(F(\ybf), H_{p,b})$ and further $\check{\beta}(f,\xbf;r) \ge \hat{\beta}(f,F(\xbf);r)$ as claimed.
\end{proof}

Due to Proposition \ref{prop:betas-pullback} and the discussion above it in this subsection,
we see that Theorem \ref{main.thrm} is equivalent to the following coordinate version
which we will prove below.

\begin{theorem}\label{main.thrm-coord}
Let $f: \R^{n} \to \R$ be a parabolic Lipschitz function. 
The following are equivalent:
\begin{itemize}
\item[(a)] For all $(X,t) \in \Omega_f$, the Lebesgue measure on $\{(y,s) \in \R^{n}: s < t\}$ is absolutely continuous with respect to $\omega_{f}^{(X,t)} \circ F$.
\item[(b)] $f$ satisfies the Dini-$\beta$ condition  
\begin{equation*} 
\int_0^1 \hat{\beta}(f,\Xbf;r)^2 \frac{dr}{r} < \infty, \quad \text{ for $\sigma_f$ almost every $\Xbf \in \partial \Omega_f$} .
\end{equation*}
\item[(c)] There exists a family of \textit{regular} $Lip(1,1/2)$ functions $\{f_j\}_{j=1}^{\infty}$ such that 
\[
 \abs{F^{-1}\left(\partial \Omega_f \setminus \bigcup_{j=1}^{\infty} \partial \Omega_{f_j} \right)} = 0.
\]
\end{itemize} 
\end{theorem}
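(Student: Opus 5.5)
We will prove the cycle (a) $\Rightarrow$ (c) $\Rightarrow$ (b) $\Rightarrow$ (a). Throughout we work in the flat coordinates of Section \ref{sec:reductions}, using $\omega^{\Xbf_0}=\omega_f^{\Xbf_0}\circ F$ and Lebesgue measure on $\R^n$. By Proposition \ref{prop:betas-pullback}, condition (b) is equivalent to $\int_0^1\check\beta(f,\xbf;r)^2\,dr/r<\infty$ for a.e.\ $\xbf\in\R^n$.

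\emph{(a) $\Rightarrow$ (c).} Fix a cube $Q_0$ and a pole $\Xbf_0$ lying far above and later in time than $Q_0$. By (a), $|\cdot|\ll\omega^{\Xbf_0}$ on $Q_0$. Combined with the doubling and Harnack properties of caloric measure on parabolic Lipschitz domains, this lets us find, for every $\varepsilon>0$, a set $E\subset Q_0$ with $|Q_0\setminus E|<\varepsilon$ on which the density $k=d|\cdot|/d\omega^{\Xbf_0}$ is bounded above and below. A stopping-time (corona) argument should then yield a sawtooth subdomain $\Omega_E\subset\Omega_f$ whose caloric measure is $A_\infty$ with respect to surface measure. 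We then invoke the necessity direction of \cite{BHMN1}, in the streamlined form of \cite{BFHH}, on a parabolic Lipschitz domain built from the sawtooth: the resulting graph $f_E$ agrees with $f$ on $E$, and by \cite{BHMN1} the $A_\infty$ property forces $f_E$ to be regular. Letting $\varepsilon=1/j$ and exhausting $\R^n$ by countably many cubes gives the countable family $\{f_j\}$. The delicate points are to verify that the sawtooth is again a parabolic Lip$(1,1/2)$ graph domain with constants controlled by $L$, and that the quantitative hypothesis of \cite{BHMN1} really holds there.

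\emph{(c) $\Rightarrow$ (b).} For a regular $f_j$, the Proposition preceding Definition \ref{def:regular-graph} (applied after localizing $f_j$ with a cutoff) gives a Carleson bound on $\check\beta(f_j,\cdot;\cdot)^2\,d\ybf\,dr/r$. Integrating over a cube shows that $\int_0^1\check\beta(f_j,\ybf;r)^2\,dr/r$ is finite for a.e.\ $\ybf$. On $E_j=\{\ybf: f(\ybf)=f_j(\ybf)\}$ we must pass this to $f$. At a density point of $E_j$ we have $|Q(\ybf,r)\setminus E_j|=o(|Q(\ybf,r)|)$, but that is not summable in $dr/r$. So we first restrict to points $\ybf$ where $\int_0^1 r^{-1}\,|Q(\ybf,r)\setminus E_j|/|Q(\ybf,r)|\,dr<\infty$. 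This holds a.e.\ on $E_j$ by the standard Marcinkiewicz-integral argument applied to the distance to the complement. Since $|f-f_j|\lesssim L r$ on $Q(\ybf,r)$, we get
\[
\check\beta(f,\ybf;r)^2\lesssim \check\beta(f_j,\ybf;r)^2+L^2\,\frac{|Q(\ybf,r)\setminus E_j|}{|Q(\ybf,r)|},
\]
and the right-hand side is summable in $dr/r$, which gives (b).

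\emph{(b) $\Rightarrow$ (a).} This is the main obstacle. Let $N\subset\{s<t\}$ with $\omega^{\Xbf_0}(N)=0$, and suppose $|N|>0$. Using (b), pick $E\subset N$ of positive measure and $r_0>0$ such that $\int_0^{r_0}\check\beta(f,\ybf;r)^2\,dr/r<\delta$ for all $\ybf\in E$, uniformly. Following David--Semmes \cite{DS-Ast} (or \cite{BHHLN-CME}), we construct a regular Lip$(1,1/2)$ graph $g$ that coincides with $f$ on a subset of $E$ of positive measure. Its $\beta$-Carleson norm is controlled, since it only sees the small $\beta$'s coming from $E$; by the Remark, $D_t^{1/2}g\in BMO$. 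By \cite{Lew-Mur-Mem}, $\omega_g\in A_\infty(\sigma_g)$. We then compare $\omega_g$ with $\omega_f$ on the common boundary via the maximum principle, after placing $\Omega_g$ inside $\Omega_f$ near $E$, possibly by taking a sawtooth. This yields $\omega_f^{\Xbf_0}\gtrsim\omega_g^{\Xbf_1}$ on $F(E)\cap\partial\Omega_g$, which has positive measure, contradicting $\omega^{\Xbf_0}(N)=0$. The time restriction $s<t$ enters through the parabolic Harnack chain needed to connect the poles, and is the technical crux there.

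The same comparison argument, run in reverse with $E$ chosen of positive $\omega^{\Ybf}$-measure, gives Theorem \ref{partconv.thrm}.
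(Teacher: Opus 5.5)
Your (b)$\Rightarrow$(a) is essentially the paper's argument: selective mollification of $f$ on a compact $K$ with uniformly bounded Dini integral (Lemma~\ref{lemma:btoa-betasformollified}), lifted above $f$ by adding $c_nLh_K$, then \cite{Lew-Mur-Mem} and the maximum principle. The other two implications have genuine problems.

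The main gap is in (a)$\Rightarrow$(c). You claim a stopping time ``should yield a sawtooth subdomain whose caloric measure is $A_\infty$'', and then apply \cite{BHMN1}. This needs a parabolic sawtooth lemma that you neither prove nor cite, and for the natural sawtooth $S_{E,\theta}=\{x_0>f+\theta h_E\}$ the claim is false. By \cite{BHMN1} itself, $A_\infty$ would force $f+\theta h_E$ to be regular. But off $E$ this function carries all the roughness of $f$. For instance, take smooth Weierstrass-type bumps of height $\sim\ell(W)$ in the Whitney cubes $W$ of $E^c$, with $\check\beta$-Carleson norms finite for each $W$ but unbounded as $\ell(W)\to0$. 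These are compatible with (b), hence with (a), yet make $f+\theta h_E$ non-regular. Any sawtooth that does work must already be bounded by a regular graph agreeing with $f$ on $E$, so producing it is no easier than (c) itself.

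The missing idea is that no $A_\infty$ statement on a subdomain is needed. The stopping time of Lemma~\ref{lemma:stopping-lemma}, together with Lemma~\ref{lemma:green-caloric}, shows directly that $S_{E,\theta}$ is an $M$-good Green subdomain (Definition~\ref{def:m-good-green}) for the adjoint Green function $G$ of the \emph{original} domain. The regular graph is then a level set of $G$, namely $\psi_{E,Q_0}(\xbf)=\varphi_{Q_0}(\xbf)f(h_E(\xbf);\xbf)$ with $G(f(r;\xbf),\xbf)=r$ (Lemma~\ref{lemma:implicit}). It equals $f$ on $E$ since $h_E=0$ there, and it is smooth off $E$. $M$-goodness is precisely the input for the square function bounds of Lemma~\ref{lemma:square-function}, which give $D_t^{1/2}\psi_{E,Q_0}\in BMO$ via Lemma~\ref{lemma:bmo-estimate}.

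In (c)$\Rightarrow$(b), your claim that $\int_0^1 \frac{|Q(\ybf,r)\setminus E_j|}{|Q(\ybf,r)|}\frac{dr}{r}<\infty$ for a.e.\ $\ybf\in E_j$ is false in general. This is a Marcinkiewicz integral with exponent $0$, and the standard argument needs a positive exponent. Consider a Cantor-type set built by removing, at stage $k$, a proportion $2^{-k}$ in holes much smaller than their spacing, so each stage is visible over roughly $2^k$ dyadic scales. It has positive measure, yet the integral diverges at every one of its points. The bound $|f-f_j|\lesssim Lr$ is too crude; use instead $|f(\zbf)-f_j(\zbf)|\le L_j\dist(\zbf,E_j)$, comparing both functions at a nearest point of $E_j$. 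This gives
\[
\check\beta(f,\xbf;r)^2\le 2\check\beta(f_j,\xbf;r)^2+2L_j^2\fint_{Q(\xbf,r)}\Big(\frac{\dist(\zbf,E_j)}{r}\Big)^2\,d\zbf .
\]
The last term is an exponent-$2$ Marcinkiewicz integral, finite a.e.\ on $E_j$ by Fubini, which is the paper's argument.
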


\section{Absolute continuity implies exhaustion} 
\label{sec:a-to-c}
To prove that a $Lip(1,1/2)$ graph can be covered by countably many regular $Lip(1,1/2)$ graphs,
it suffices to show that for every $\varepsilon > 0$,
we may cover $(1-\varepsilon)$-portion of it by a regular $Lip(1,1/2)$ graph.
Such a regular $Lip(1,1/2)$ graph is constructed as a sawtooth type domain whose boundary has an ample contact with the original $Lip(1,1/2)$ graph.
The set $E$ below,
completely general for the time being,
will take the role of those coordinates for which $F(E)$ will be the contact set.
The regularized distance $h_E$ below is needed as the 
standard parabolic distance fails to be regular $Lip(1,1/2)$ function and hence is of little use for us.

\subsection{Regularized distance}
Let $E \subset \R^{n}$ be closed.
Consider the family of parabolic dyadic cubes 
\[
\mathcal{D}_k = \{ 2^{k}([0,1)^{n-1} + j')  \times  [ 2^{2k}j, 2^{2k}(j+1))  : j \in \Z, \ j' \in \Z^{n-1} \}.
\]
Let 
\[
\mathcal{W}_k = \{Q \in \mathcal{D}_k: 2^{k+2} \le  \dist(Q,E) < 2^{k+3}  \}
\]
and let $\mathcal{W}$ be the family of maximal elements in $\bigcup_{k \in \Z} \mathcal{W}_k$.
Let $\{\varphi_Q: Q \in \mathcal{W}\}$ be a smooth partition of unity adapted to the cover $\{3Q: Q \in \mathcal{W}\}$,
meaning that $\supp \varphi_{Q} \subset 3Q$ and for each $\alpha \in \N^{n}$ there exists $C_{\alpha}$ such that 
\[
\sup_{\xbf \in 3Q} |\partial^{\alpha} \varphi_Q(\xbf)| \le C_{\alpha} \ell(Q)^{-\alpha_{n}-|\alpha|}.
\]
The regularized distance to $E$ is defined for $x \in \R^{n-1}$ and $t \in R$ as 
\begin{equation}
\label{def:reg-distance}
h_E(x,t) := \sum_{Q \in \mathcal{W}}  \diam(Q) \varphi_{Q}(x,t). 
\end{equation}

The following lemma is essentially a restatement of Lemma 3.24 in \cite{BHHLN-CME}.
For a proof, see the appendix of \cite{BHHLN-CME}.
\begin{lemma}
\label{lemma:estimates-reg-dist}
Let $E \subset \R^{n}$ be closed
and let $h_E$ be as defined in \eqref{def:reg-distance}.
Then  
\begin{itemize}
  \item there is a constant $C_h$ depending only on the dimension such that for all $\xbf \in \R^{n}$ it holds $C_h^{-1} \dist(\xbf,E) \le  h_E(\xbf) \le C_{h}\dist(\xbf,E)$;
  \item $h_E$ is smooth in $E^{c}$ and $Lip(1,1/2)$ in $\R^{n}$ with constant depending only on the dimension;
  \item for all integers $m \ge 0$
\[
\sup_{\xbf \in E^{c}} \left(  h_{E}(\xbf)^{2m-1} |\partial_{t}^{m} h_E(\xbf)| + h_{E}(\xbf)^{m-1} |\nabla_{x}^{m} h_E(\xbf)|  \right) \lesssim 1
\]  
the implicit constant only depending on the dimension.
  \item it holds 
\[
\no{ D_t^{1/2} h_E}_{BMO} <  \infty,
\]
that is, $h_E$ is a regular $Lip(1,1/2)$ function.
\end{itemize} 
\end{lemma}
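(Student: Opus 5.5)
The plan is to follow the standard Whitney-decomposition argument of Stein, adapted to the parabolic metric, and to treat the four bullet points in order. Throughout, the basic geometric input is the following set of properties of the maximal cubes in $\mathcal{W}$: they are pairwise disjoint and cover $E^c$; each $Q \in \mathcal{W}$ satisfies $\dist(Q,E) \approx \ell(Q)$; and if $3Q \cap 3Q' \neq \varnothing$, then $\ell(Q) \approx \ell(Q')$. The last property gives bounded overlap of $\{3Q\}$. Each of these follows from the defining window $2^{k+2} \le \dist(Q,E) < 2^{k+3}$ together with the triangle inequality for \eqref{parabolic-distance}.

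For the first bullet, fix $\xbf \in E^c$. Every $Q$ with $\varphi_Q(\xbf) \ne 0$ has $\xbf \in 3Q$, hence $\diam(Q) \approx \ell(Q) \approx \dist(\xbf,E)$. Since $\sum_Q \varphi_Q = 1$ on $E^c$, the sum $h_E(\xbf)$ is a convex combination of numbers comparable to $\dist(\xbf,E)$; on $E$ both sides vanish. For smoothness and the third bullet, each derivative $\partial_t^m$ or $\nabla_x^m$ falls on some $\varphi_Q$ and costs $\ell(Q)^{-2m}$ or $\ell(Q)^{-m}$ respectively. Only boundedly many $Q$ contribute at $\xbf$, each with $\ell(Q) \approx h_E(\xbf)$, which gives the stated bounds. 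For the $Lip(1,1/2)$ claim, I would first prove it locally from $|\nabla_x h_E| \lesssim 1$ and $|\partial_t h_E| \lesssim h_E^{-1}$. Take two points at parabolic distance $\rho$. If $\rho \ll h_E$ at one of them, integrate along a path; the time part gives $|t-s| h_E^{-1} \le |t-s|^{1/2}$ because $|t-s|^{1/2} \le \rho \lesssim h_E$. If instead $\rho \gtrsim h_E$ at both points, use the first bullet and the $1$-Lipschitz property of $\dist(\cdot,E)$: $|h_E(\xbf)-h_E(\ybf)| \lesssim \dist(\xbf,E)+\dist(\ybf,E) \lesssim \rho$.

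The main obstacle is the last bullet, $D_t^{1/2} h_E \in BMO$. I would prove it through the Carleson characterization in the Proposition above. I am not sure that Proposition is really needed, since it requires compact support, so I will prove the Carleson bound directly for the relevant local pieces instead. Concretely, I aim to show that $\check\beta(h_E,\ybf;r)$ satisfies the Carleson packing condition
\[
\int_0^{r}\int_{Q(\xbf,r)}\check\beta(h_E,\ybf;s)^2\,\frac{d\ybf\, ds}{s}\lesssim r^{n+1}.
\]
Split the pairs $(\ybf,s)$ into two cases. In the first, $s \le c\, h_E(\ybf)$. Then $Q(\ybf,s)$ avoids $E$, and a second-order Taylor expansion in $x$ together with a first-order expansion in $t$ gives
\[
\check\beta(h_E,\ybf;s) \lesssim s\,|\nabla_x^2 h_E| + s\,|\partial_t h_E| \lesssim \frac{s}{h_E(\ybf)},
\]
and $\int_0^{c\,h_E(\ybf)} (s/h_E)^2\, ds/s \lesssim 1$. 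That looks like an $O(1)$ pointwise bound, which would not suffice, so this step needs more care. The point is that $\int_0^{h_E(\ybf)} (s/h_E)^2\, ds/s$ is bounded per point, while the integral over $Q(\xbf,r)$ is $\lesssim r^{n+1}$ provided the integrand is bounded and we only integrate $s$ up to $\min(r,c\,h_E)$. This is fine. In the second case, $s > c\,h_E(\ybf)$. Here I use the trivial bound $\check\beta \lesssim 1$ from the Lipschitz constant, which works when the set of such $(\ybf,s)$ carries Carleson measure $\lesssim r^{n+1}$. This is the part where I expect difficulty. The set $\{(\ybf,s): \dist(\ybf,E) \lesssim s\}$ is not Carleson in general, for example when $E$ has positive measure. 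So for scales $s \gtrsim \dist(\ybf,E)$, I would instead compare $h_E$ with $\dist(\cdot,E)$ and exploit the fact that in such cubes $h_E$ is small relative to $s$ only near $E$. This is the place where I would defer to the argument of the appendix of \cite{BHHLN-CME}, which handles exactly this via a stopping-time comparison with the affine function $0$ on cubes meeting $E$ at a fixed density. Finally, I would transfer the Carleson bound to BMO using the Remark's estimate $\no{D_t^{1/2} h_E}_{BMO} \lesssim L + M^{1/2}$, localized by a smooth cutoff.
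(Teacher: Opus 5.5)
The paper gives no argument of its own for this lemma; it only cites Lemma 3.24 and the appendix of \cite{BHHLN-CME}. Your Whitney argument for the first three bullets is the standard one and is fine. The genuine gap is in the last bullet. Your route is viable: prove Carleson packing of $\check\beta(h_E,\cdot\,;\cdot)$, then apply $\no{D_t^{1/2}f}_{BMO}\lesssim L+M^{1/2}$ from the Remark. But you stop at its only nontrivial step, the regime $s\gtrsim h_E(\ybf)$, and defer it because of a misdiagnosed obstacle. That region does not need to carry Carleson measure, and no stopping time or density condition is needed; you only need a better bound than $\check\beta\lesssim 1$. The competitor $A\equiv 0$ and the first bullet give
\[
\check\beta(h_E,\ybf;s)^2\le\fint_{Q(\ybf,s)}\Big(\frac{h_E(\zbf)}{s}\Big)^2\,d\zbf\lesssim\fint_{Q(\ybf,s)}\Big(\frac{\dist(\zbf,E)}{s}\Big)^2\,d\zbf .
\]
On this region $\dist(\zbf,E)\le\dist(\ybf,E)+s\lesssim s$, so Tonelli's theorem yields
\[
\int_{Q(\xbf,r)}\int_{c\,h_E(\ybf)}^{r}\check\beta(h_E,\ybf;s)^2\,\frac{ds}{s}\,d\ybf\lesssim\int_{Q(\xbf,2r)}\int_{\dist(\zbf,E)/C}^{\infty}\Big(\frac{\dist(\zbf,E)}{s}\Big)^2\,\frac{ds}{s}\,d\zbf\lesssim r^{n+1}.
\]
The inner integral is bounded by a constant and vanishes for $\zbf\in E$, so $|E|>0$ is harmless. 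This is exactly the computation of $\II_2$ in the proof of Lemma \ref{lemma:btoa-betasformollified}.

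There are two smaller issues. First, the passage to $BMO$ needs more than ``localized by a smooth cutoff'', because $h_E$ is unbounded and the Proposition and Remark concern compactly supported functions. For a cube $Q$, apply them to $\eta\,(h_E-h_E(c(Q)))$ with $\eta\equiv 1$ on $4Q$ and $\supp\eta\subset 8Q$. Subtracting the constant is necessary; otherwise the Lipschitz constant blows up when $Q$ is far from $E$. On $Q(\ybf,s)$ with $s\le\ell(Q)$, the cross term $(\eta(\zbf)-\eta(\ybf))(h_E(\zbf)-h_E(c(Q)))$ is affine in $z$ up to $O(s^2/\ell(Q))$, hence Carleson; larger scales follow from the compact support and the competitor $A\equiv 0$. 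The tail $(1-\eta)(h_E-h_E(c(Q)))$ contributes bounded oscillation on $Q$, because $|\partial_tK(\xbf-\ybf)-\partial_tK(\xbf'-\ybf)|\lesssim\ell(Q)\,|\ybf-c(Q)|^{-n-3}$ for $\xbf,\xbf'\in Q$ and $\ybf\notin 4Q$, which is integrable against linear growth. Second, your claim that covering of $E^c$ follows from the window $2^{k+2}\le\dist(Q,E)<2^{k+3}$ is false as stated. For $n=2$, $E=\{(1/2,0)\}$ and $\xbf=(9.2,0.1)$, no dyadic cube containing $\xbf$ lies in any $\mathcal{W}_k$. Widening the window to, say, $2^{k+2}\le\dist(Q,E)<2^{k+4}$ fixes this at no cost. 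With these repairs your argument proves the last bullet modulo the estimate quoted in the Remark, which is a self-contained alternative to the paper's citation.
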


\subsection{Graphs from the Green's function}
When constructing the approximating regular $Lip(1,1/2)$ graph,
we aim to mimic the shape of 
\[
f(\xbf) + h_E(\xbf)
\]
for a suitable closed set $E$.
Our goal is to find a set $E$ such that $f$ is very regular in $E$
and the sawtooth type domain over the epigraph of $f$ allows us to replace the expression above by something more tractable.
To this end,
we will find a substituting graph following the level surfaces of the Green's function. 
The gradient of the Green's function always points more or less to $e_0$ direction,
as the following lemma states.

\begin{lemma}
\label{lemma:green-non-nondegenerate}
Let $f$ be parabolic $L$-Lipschitz. 
Let $Q_0 \subset \R^{n}$.
Let $\Xbf_0 = {\trm crk}_f(Q_0)$. 
Let $G$ be the adjoint Green's function with pole at $\Xbf_0$ and $\Xbf \in \mathcal{R}_f(Q_0)$.
Then  
\[
|\nabla G(\Xbf)| \sim \nabla G(\Xbf) \cdot e_0 \sim \frac{G(\Xbf)}{\dist(\Xbf,\Omega^{c})}.
\]
The implicit constants depend only on the dimension and $L$. 
\end{lemma}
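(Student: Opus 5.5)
The plan is to combine the known boundary estimates for the adjoint Green's function in parabolic Lipschitz domains with a flattening/compactness argument that produces the nondegeneracy of the $e_0$-derivative. The comparison $|\nabla G(\Xbf)| \lesssim G(\Xbf)/\dist(\Xbf,\Omega^c)$ is the easy part. Fix $\Xbf \in \mathcal{R}_f(Q_0)$ and set $d = \dist(\Xbf,\Omega^c)$. Since the pole $\Xbf_0 = {\trm crk}_f(Q_0)$ sits at distance $\gtrsim A\ell(Q_0)$ above the region and (for the adjoint function) in the appropriate time direction, $G$ is a nonnegative adjoint caloric function in the parabolic cylinder $Q(\Xbf, d/2)$. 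Interior gradient estimates for the backward heat equation give $|\nabla G(\Xbf)| \lesssim d^{-1} \sup_{Q(\Xbf,d/4)} G$. The backward Harnack inequality for nonnegative solutions vanishing on the lateral boundary of a Lip(1,1/2) domain then gives $\sup_{Q(\Xbf,d/4)} G \lesssim G(\Xbf)$; this is the Fabes–Garofalo–Salsa / Kaufman–Wu type boundary Harnack theory, which I quote, and it is valid precisely because $\Xbf$ lies in the Carleson box below the good corkscrew point. Note that ordinary Harnack would require a time lag, so the backward Harnack is essential here.

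The main content is the lower bound $\partial_{x_0} G(\Xbf) \gtrsim G(\Xbf)/d$, with constants depending only on $n$ and $L$. It implies $|\nabla G| \ge \nabla G\cdot e_0$, which closes the chain. I would first try a blow-up/compactness argument, arguing by contradiction. Suppose there are $L$-Lipschitz $f_k$, cubes $Q_k$ and points $\Xbf_k \in \mathcal{R}_{f_k}(Q_k)$ with $d_k\,\partial_{x_0}G_k(\Xbf_k)/G_k(\Xbf_k) \to 0$. Translate $\Xbf_k$ to the origin, rescale parabolically by $d_k$, and normalize $G_k(\Xbf_k)=1$. The rescaled graphs converge locally uniformly (Arzelà–Ascoli) to an $L$-Lipschitz graph $f_\infty$. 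By the boundary Harnack principle and Hölder continuity up to the boundary, the normalized $G_k$ converge locally uniformly, with derivatives in the interior, to a positive adjoint caloric $u$ in $\Omega_{f_\infty}$. This limit vanishes on the boundary, since as $d_k/\ell(Q_k)$ may go to $0$ the pole escapes to infinity. It satisfies $\partial_{x_0}u(0)=0$, and the limit point lies at distance $1$ from the boundary.

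Two routes then finish the argument. If $d_k/\ell(Q_k)$ stays bounded below, the pole stays at a controlled position and I would instead use the quantitative monotonicity directly. The general mechanism, in both regimes, is the comparison principle with vertical translates: for $s>0$ the function $G(\cdot + s e_0)$ is again adjoint caloric, and on the boundary it is $\ge G = 0$. Hence $G(\cdot+se_0) \ge G$ in the relevant region, by the maximum principle in the Carleson box away from the pole, or globally for the blow-up limit using uniqueness of positive solutions vanishing on the boundary (the "Martin kernel at infinity"). This gives $\partial_{x_0} G \ge 0$. To upgrade to a strict quantitative bound, I would use the boundary Harnack principle to compare $G(\cdot+se_0)-G$ with $G$: it is nonnegative, and at the boundary point below $\Xbf$ it is comparable to $G(\cdot + se_0) \sim G$ at height $s\sim d$ (by the Carleson estimate and the Hölder decay $G(\Ybf)\lesssim (\dist(\Ybf,\Omega^c)/d)^{\alpha} G(\Xbf)$). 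Therefore $G(\Xbf+se_0)-G(\Xbf)\gtrsim G(\Xbf)$ for $s = c\,d$, and the derivative bound follows by combining this with the interior $C^{1,1}$-in-space estimates of the previous paragraph along the segment.

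The main obstacle is making the strict positivity uniform, that is, passing from "monotone increasing" to a quantitative $\partial_{x_0}$ lower bound at the single point $\Xbf$ rather than merely on average over the segment. I expect to handle this either through the contradiction/compactness argument above, in which the limit is strictly monotone by the strong maximum principle applied to the nonnegative solution $u(\cdot+se_0)-u$, which cannot vanish identically because $u=0$ on the boundary while $u>0$ inside, or simply by citing the corresponding lemma in \cite{BHMN1} or \cite{HLN1}, where this estimate is established.
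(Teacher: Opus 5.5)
The paper gives no proof of this lemma. It is one of the estimates ``quoted from the literature'' for the good corkscrew point, with $A=A(n,L)$ chosen large, and it is used only as input for Lemma \ref{lemma:implicit}. So your closing fallback (cite it) is what the paper does, and your upper bound $|\nabla G(\Xbf)|\lesssim G(\Xbf)/d$ from interior estimates plus the backward Harnack inequality is fine.

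The argument you sketch for the lower bound has a genuine gap at its core. Everything rests on the qualitative monotonicity $G(\cdot+se_0)\ge G$, and neither justification you give works. In the Carleson box, the maximum principle needs the inequality on the whole parabolic boundary of $\mathcal{R}_f(Q_0)$, not only on the graph portion where $G=0$. On the top face, on the faces $|\xbf-c(Q_0)|=\ell(Q_0)$ (which run down to $\partial\Omega$), and on the latest-time face (the ``initial'' data for the adjoint equation), $G(\cdot+se_0)\ge G$ is exactly the monotonicity you are trying to prove, so the argument is circular. In all of $\Omega$ the inequality is simply false, since $G(\cdot+se_0)-G\to-\infty$ at the pole $\Xbf_0$. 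A telling symptom is that your lower-bound argument never uses that $A$ is large, yet the statement depends on it. In the half-space with pole at height $A\ell$ and time lag $A\ell^2$ one has $\partial_{x_0}\log G=\frac{1}{2\ell}\coth\bigl(\frac{x_0}{2\ell}\bigr)-\frac{x_0}{2A\ell^2}$. This is negative near the top $x_0=3L\ell$ of $\mathcal{R}_f(Q_0)$ when $A$ is small compared to $L$.

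The blow-up route does not repair this. Your limit $u$ is a positive adjoint caloric function in a global parabolic Lipschitz graph domain vanishing on its boundary. In the parabolic setting such functions are not unique up to constants: in $\{x_0>0\}$, the functions $x_0$, $x_0e^{a\cdot x-|a|^2t}$ and $\sinh(bx_0)e^{-b^2t}$ all qualify. To single out the right limit you would have to carry the Carleson and backward Harnack estimates at all scales into the limit and prove a Liouville-type theorem for that class. Even uniqueness would not by itself give $u(\cdot+se_0)\ge u$. The translate $u(\cdot+se_0)$ does not vanish on $\partial\Omega_{f_\infty}$, and a comparison on an unbounded domain needs control at spatial infinity and as $t\to+\infty$. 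Your strong-maximum-principle step presupposes exactly this nonnegativity. The regime $d_k\gtrsim\ell(Q_k)$ is deferred to ``quantitative monotonicity'', which is the statement itself.

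A smaller point concerns the passage to a pointwise derivative. Once $\partial_{x_0}G\ge 0$ is known on a neighbourhood of size $\sim d$, do not use $C^{1,1}$ bounds. Their error $\sim s\,G/d^2$ is not beaten by a gain whose constant depends on $s/d$. Instead, apply the interior Harnack inequality to the nonnegative adjoint solution $\partial_{x_0}G$, going from a vertical segment at a slightly later time back to $\Xbf$, and then use the backward Harnack inequality for $G$.

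The missing ingredient is therefore a proof of near-boundary monotonicity for the Green's function with a far-away pole. That is the nontrivial content the paper imports from the literature.
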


This non-degeneracy of the Green's function implies that the level surfaces of the Green's function can indeed be seen as graphs of functions.

\begin{lemma}
\label{lemma:implicit}
Let $f$ be parabolic $L$-Lipschitz. 
Let $Q_0 \subset \R^{n}$.
Let $\Xbf_0 = {\trm crk}_f(Q_0)$.
Assume that $\omega^{\Xbf_0}(Q_0) > 0$.
Let $G$ be the adjoint Green's function with pole at $\Xbf_0$.

Then there exists $r_0 > 0$ such that for each $r \in (0,r_0)$
there exists $f(r; \cdot) : Q_0 \to \R$ such that 
\begin{itemize}
  \item it holds $G(f(r;\xbf),\xbf) = r $ for all $\xbf \in Q_0$;
  \item for each $\xbf \in Q_0$, the mapping $r \mapsto f(r;\xbf)$ is strictly increasing, differentiable and satisfies $\lim_{r \to 0+} f(r;\xbf) = f(\xbf) =: f_0(\xbf)$;
  \item the function $f(r;\cdot)$ is in $C^{1}(Q_0)$. 
\end{itemize}
\end{lemma}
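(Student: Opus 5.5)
The plan is to apply the implicit function theorem to $G$ along vertical lines $x_0 \mapsto G(x_0,\xbf)$, using Lemma \ref{lemma:green-non-nondegenerate} as the source of the nondegeneracy. Fix $\xbf \in Q_0$. In the region $\mathcal{R}_f(Q_0)$ the adjoint Green's function $G$ is smooth, since $\Xbf_0 = {\trm crk}_f(Q_0)$ lies far away in the time direction. By Lemma \ref{lemma:green-non-nondegenerate}, $\partial_{x_0} G(\Xbf) \sim G(\Xbf)/\dist(\Xbf,\Omega^c) > 0$ there. Hence $x_0 \mapsto G(x_0,\xbf)$ is strictly increasing on the vertical segment $\{(x_0,\xbf): f(\xbf) < x_0,\ (x_0,\xbf) \in \mathcal{R}_f(Q_0)\}$. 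It tends to $0$ as $x_0 \downarrow f(\xbf)$: $G$ vanishes continuously on the boundary, because parabolic Lipschitz graph domains are regular for the heat equation (for instance by the Petrowsky / exterior cone criterion). It is positive at the top of the segment.

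Next I would define $r_0$ as the infimum over $\xbf \in \overline{Q_0}$ of $G$ evaluated on the "roof" of $\mathcal{R}_f(Q_0)$. The natural choice is the points $(f(\xbf) + cL\ell(Q_0), \xbf)$ for a small constant $c$, so that these points lie in $\mathcal{R}_f(Q_0)$ at distance $\gtrsim \ell(Q_0)$ from $\partial\Omega$. To see that $r_0 > 0$, I would use continuity and positivity of $G$ on this compact set, which lies inside $\Omega_f$ below the pole in the adjoint sense. Positivity there comes from the Harnack chain and the corkscrew condition relative to $\Xbf_0$; this is where the hypothesis $\omega^{\Xbf_0}(Q_0) > 0$, equivalently nontriviality of $G$ near $Q_0$, is used. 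Alternatively one can quote the standard comparison $G(\Xbf) \gtrsim \ell(Q_0)^{-n}\omega^{\Xbf_0}(Q_0)$ at such points.

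For $r \in (0,r_0)$ the intermediate value theorem together with strict monotonicity gives a unique $f(r;\xbf)$ in the segment with $G(f(r;\xbf),\xbf) = r$. The three properties then follow.
\begin{itemize}
\item Since $\partial_{x_0} G \neq 0$, the implicit function theorem gives that $\xbf \mapsto f(r;\xbf)$ is $C^1$. Uniqueness pastes the local solutions into a single function on $Q_0$.
\item Also $r \mapsto f(r;\xbf)$ is differentiable with derivative $1/\partial_{x_0}G > 0$, hence strictly increasing.
\item As $r \to 0^+$, $f(r;\xbf)$ decreases to a limit $\ell \ge f(\xbf)$ with $G(\ell,\xbf) = 0$. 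Positivity of $G$ in the interior then forces $\ell = f(\xbf)$.
\end{itemize}

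The main obstacle is uniformity near the lateral and temporal edges of $Q_0$. There I must check that the segment used stays inside $\mathcal{R}_f(Q_0)$, where Lemma \ref{lemma:green-non-nondegenerate} applies, and that the lower bound defining $r_0$ is uniform in $\xbf$. I expect this to require choosing the roof height $cL\ell(Q_0)$ carefully, compatibly with the definition of $\mathcal{R}_f(Q_0)$ (which allows heights up to roughly $3L\ell(Q_0)$). If boundary points of $Q_0$ cause trouble, I would first prove everything on the slightly larger cube $\tfrac{3}{2}Q_0$ and then restrict to $Q_0$.
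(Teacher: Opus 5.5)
Your proposal is correct and takes essentially the paper's route: the paper only observes that Lemma \ref{lemma:green-non-nondegenerate} gives $\partial_{x_0} G > 0$ in $\mathcal{R}_f(Q_0)$, invokes the implicit function theorem, and defers the details to Lemma 4.24 of \cite{BHMN1}. Your added steps supply those omitted details: the choice of $r_0$ via a uniform lower bound on $G$ along a roof inside the Carleson box, the intermediate value theorem on vertical segments with uniqueness used to paste local solutions, and the limit $r \to 0^+$ via boundary vanishing and interior positivity of $G$.
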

Lemma \ref{lemma:implicit} is a restatement of Lemma 4.24 in \cite{BHMN1}.
The proof is based on the observation that by Lemma \ref{lemma:green-non-nondegenerate} 
the vertical derivative $\partial_{x_0} G(\Xbf) > 0$ for all $\Xbf \in \mathcal{R}_L(Q_0)$.
Due to this, one may apply the implicit function theorem to prove the existence of $f(r;\cdot)$ along with the properties listed above. 
For details, we refer to Lemma 4.24 in \cite{BHMN1}. 

For a closed set $E \subset Q_0$,
we can now define 
\begin{equation}
\label{eq:psi-graph-final-def}
\psi_{E,Q_0} (\Xbf) = \varphi_{Q_0}(\xbf) f( x_0 + h_E(\xbf); \xbf)
\end{equation}
so that $\psi_{E,Q_0} (\xbf):= \psi_{E,Q_0} (0,\xbf)$ is the candidate for a regular $Lip(1,1/2)$ graph approximating $f$. 
Here $\phi_{Q_0} \in C_{c}^{\infty}(2^{-1}Q_0)$ satisfies $\phi_{Q_0} = 1$ in $2^{-2}Q_0$. 
A function $\psi_E$ as above enjoys the enhanced regularity of $h_E$
and also that of the Green's function when $\xbf \notin E$.
However, 
in order to say more
we need to choose the set $E$ carefully.

\subsection{M-good Green subdomains}
The Green's function is not only qualitatively but also quantitatively non-degenerate 
inside the sawtooth domains of the following type.

\begin{definition}
\label{def:m-good-green}
Let $M,L \ge 1 $.
Let $f$ be parabolic $L$-Lipschitz. 
Let $Q_0 \subset \R^{n}$.
Let $\Xbf_0 = {\trm crk}_f(Q_0)$. 
A subdomain $\Omega' \subset \mathcal{R}_f(Q_0)$ is an $M$-good Green subdomain for $\Omega_f$ if 
for the adjoint Green's function $G$ of $\Omega_f$ with pole at $\Xbf_0$ and all $\Xbf \in \Omega'$ satisfies
\[
 \frac{1}{M} \le \frac{G(\Xbf)}{ \dist(\Xbf,\Omega_f^{c})} \frac{|Q_0|}{\omega_{f}^{\Xbf_0}(Q_0)} \le M .
\]
\end{definition}

Note that $M$-goodness is a non-trivial property even in the case of time-independent domains and the Laplacian.
It does not hold in any open neighborhood of a corner point of the domain.  
Next, given a closed set $E \subset Q_0$ and a slope parameter $\theta_0 > 0$,
we denote 
\[
S_{E,\theta_0} = \{ \Xbf \in \mathcal{R}_f(Q_0):  x_0 > f(\xbf) + \theta_0 h_E(\xbf) \}.
\]
If such a set is $M$-good for some slope,
it is good for any slope.
This follows from the time-symmetric Harnack inequality for Green's function.

\begin{lemma}
\label{lemma:green-backwards-harnack}
Let $f$ be parabolic $L$-Lipschitz. 
Let $Q_0 \subset \R^{n}$.
Let $\Xbf_0 = {\trm crk}_f(Q_0)$.
Let $G$ be the adjoint Green's function for $\Omega_f$ with pole at $\Xbf_0$.
Then for every $\eta \in (0,1)$ there exists a constant $C = C(n,\eta,L)$ 
such that fo all $\Xbf,\Ybf \in \mathcal{R}_f(Q_0)$ with $\dist(\Xbf, \Omega_f^{c}) > \diam(Q_0)\eta$ 
\[
G(Y) \le C G(X).
\]
\end{lemma}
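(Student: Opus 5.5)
The plan is to sandwich both $G(\Ybf)$ and $G(\Xbf)$ against the value of $G$ at a single reference point in an enlarged box. The tools are two standard facts for nonnegative adjoint caloric functions that vanish continuously on the lateral boundary of a parabolic Lipschitz domain: the Carleson (boundary maximum) estimate, and the time-symmetric (backward) Harnack inequality of Fabes--Garofalo--Salsa, in its form for adjoint solutions (see also Nyström's work on Lip$(1,1/2)$ graph domains). The constants in both depend only on $n$ and $L$. I treat these, like the other literature results quoted in this section, as the available input.

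\emph{Setup.} Recall that $\Ybf \mapsto G(\Ybf) = G(\Xbf_0,\Ybf)$ solves the adjoint heat equation $\partial_s G + \Delta G = 0$ in $\Omega_f \setminus \{\Xbf_0\}$, vanishes continuously on $\partial\Omega_f$, and is strictly positive at points of $\Omega_f$ lying earlier in time than $\Xbf_0$. The pole ${\trm crk}_f(Q_0) = F(c(Q_0)) + (A\ell(Q_0),0,A\ell(Q_0)^2)$ sits at height $A\ell(Q_0)$ and time $A\ell(Q_0)^2$ above the centre. The region $\mathcal{R}_f(Q_0)$ has parabolic size $\lesssim L\ell(Q_0)$. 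Enlarging $A$ if necessary (this is allowed by the convention fixed after the definition of ${\trm crk}_f$), I therefore get the following. There is an enlarged Carleson region $\mathcal{R}_f(\kappa Q_0)$, with $\kappa = \kappa(n,L)$, that contains $\mathcal{R}_f(Q_0)$ with room to spare. It stays at parabolic distance $\gtrsim \ell(Q_0)$ from $\Xbf_0$ and lies entirely in the past of $\Xbf_0$. Consequently $G$ is a positive adjoint solution there, vanishing on $\partial\Omega_f \cap \overline{\mathcal{R}_f(\kappa Q_0)}$.

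\emph{Step 1: upper bound.} Let $\Zbf$ be the "adjoint corkscrew" of $\kappa Q_0$, i.e.\ the point at height $\sim L\ell(Q_0)$ over $c(Q_0)$ and at time $\sim -\ell(Q_0)^2$ relative to it. For adjoint solutions, information propagates backward in time, so this is the point controlling the box from below in time. The Carleson estimate for the adjoint equation then yields $G(\Ybf) \le C\, G(\Zbf)$ for all $\Ybf \in \mathcal{R}_f(Q_0)$. Here $\Ybf$ may be arbitrarily close to the boundary, which is why this step is needed at all.

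\emph{Step 2: lower bound.} Now take $\Xbf \in \mathcal{R}_f(Q_0)$ with $\dist(\Xbf,\Omega_f^c) > \eta \diam(Q_0)$. The ordinary (one-directional) Harnack inequality only compares $G(\Xbf)$ with values at points on the "correct" time side of $\Xbf$. Because $G$ vanishes on the lateral boundary of the enlarged region, the backward Harnack inequality upgrades this to two-sided comparability. Concretely, it says that all values of $G$ at points of $\mathcal{R}_f(Q_0)$ that are $\eta\diam(Q_0)$-far from the boundary are comparable to $G(\Zbf)$, with constants $C(n,\eta,L)$. This step genuinely needs the time-symmetric version: the interior points $\Xbf$ range over both sides in time of the point naturally controlled by Harnack chains. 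In particular $G(\Zbf) \le C\, G(\Xbf)$, and combining with Step 1 gives $G(\Ybf) \le C\, G(\Xbf)$.

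\emph{Main obstacle.} I expect the main difficulty to be verifying the hypotheses of the backward Harnack inequality. One must ensure that $G$ vanishes on a sufficiently large lateral piece ($\kappa Q_0$ rather than $Q_0$). One must also ensure that the pole does not interfere, i.e.\ that the box lies strictly earlier than $\Xbf_0$ and well away from it, so that $G$ is a genuine positive solution there. This is exactly where the choice of $A = A(n,L)$ large enters. If the literature statement is formulated for caloric rather than adjoint caloric functions, I would first apply the time reversal $t \mapsto -t$. This transforms $\Omega_f$ into the domain over the graph of $f(x,-t)$, which is still parabolic $L$-Lipschitz, so the cited results apply verbatim.
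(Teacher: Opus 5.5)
Your proposal is correct and follows essentially the paper's route: the paper states this lemma without a separate proof, remarking only that it follows from the time-symmetric (backward) Harnack inequality for the Green's function, and your adjoint Carleson estimate at a backward corkscrew just makes explicit how points $\Ybf$ close to $\partial\Omega_f$ are handled. One correction to your \emph{main obstacle}: vanishing of $G$ on a larger lateral piece such as $\kappa Q_0$ does not by itself give the backward Harnack inequality (a purely local version is false, e.g.\ for adjoint solutions driven by lateral data elsewhere, or for a Green's function with a badly placed pole); what makes it available is that $G$ vanishes on all of $\partial\Omega_f$ and its pole sits at ${\trm crk}_f(Q_0)$, at time lag $\approx A\ell(Q_0)^2$ and spatial offset $\approx A\ell(Q_0)$ from the box, which is the configuration the cited Green's function estimates require.
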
 

\begin{corollary}
\label{lemma:m-goodness-and-slope}
Let $f$ be parabolic $L$-Lipschitz. 
Let $Q_0 \subset \R^{n}$.
Let $\Xbf_0 = {\trm crk}_f(Q_0)$.
Let $E \subset Q_0$ be a closed set,
let $\theta_0 > 0$,
let $M \ge 1$ and let 
\[
S_{E,\theta_0} = \{ \Xbf \in \mathcal{R}_f(Q_0):  x_0 > f(\xbf) + \theta_0 h_E(\xbf) \}
\]
be an $M$-good Green subdomain.

Then for each $\theta \in (0,\theta_0)$
there exists $\tilde{M} = \tilde{M}(n,M,\theta_0,\theta) \ge 0$ such that $S_{E_{\theta}}$ is a $\tilde{M}$-good Green subdomain.
\end{corollary}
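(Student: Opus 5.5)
Since $\theta<\theta_0$ and $h_E\ge 0$, we have $S_{E,\theta_0}\subset S_{E,\theta}$. The plan is to show that the bounds defining $M$-goodness extend from $S_{E,\theta_0}$ to the larger set $S_{E,\theta}$ at the cost of a constant depending on $n$, $L$, $M$, $\theta_0$, $\theta$. Write $a := \omega_f^{\Xbf_0}(Q_0)/|Q_0|$. For a point $\Xbf=(x_0,\xbf)\in S_{E,\theta}\setminus S_{E,\theta_0}$ we have
\[
f(\xbf)+\theta h_E(\xbf) < x_0 \le f(\xbf)+\theta_0 h_E(\xbf).
\]
In particular $h_E(\xbf)>0$, so $\xbf\notin E$. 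Set $d:=\dist(\Xbf,\Omega_f^c)$. Since $f$ is $L$-Lipschitz, $d\sim_L x_0-f(\xbf)\sim \theta h_E(\xbf)$, with constants depending on $\theta,\theta_0,L$. By Lemma \ref{lemma:estimates-reg-dist}, $h_E(\xbf)\sim \dist(\xbf,E)$.

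The strategy is to connect $\Xbf$ to a point $\Ybf\in S_{E,\theta_0}$ at which the goodness bound is known, and then transfer the bound via Harnack-type comparisons for the adjoint Green's function $G$. The natural choice is the vertical lift
\[
\Ybf=(f(\xbf)+2\theta_0 h_E(\xbf),\xbf),
\]
or a suitable point in a comparable parabolic cylinder above $\Xbf$. Both $\Xbf$ and $\Ybf$ lie at distance comparable to $h_E(\xbf)$ from $\partial\Omega_f$, and they are separated by a comparable amount in the $x_0$ direction only. Since $G$ is adjoint-caloric away from the pole, and the pole $\Xbf_0$ is far away (we are inside $\mathcal{R}_f(Q_0)$), we can use a Harnack chain of cylinders of size $\sim \theta h_E(\xbf)$ joining $\Xbf$ to $\Ybf$ at essentially equal times. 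The time lag in parabolic Harnack forces a one-directional comparison. To get two-sided control I would instead invoke the time-symmetric estimate underlying Lemma \ref{lemma:green-backwards-harnack}, applied in the localized form: rescale Lemma \ref{lemma:green-backwards-harnack} to the cube $Q$ centered at $\xbf$ with $\ell(Q)\sim h_E(\xbf)$ (where $G$ is comparable to the Green's function with pole at $\crk_f(Q)$ times $G(\crk_f(Q))$, by the comparison principle). The points $\Xbf,\Ybf$ both lie in $\mathcal{R}_f(Q)$ at distance $\gtrsim \eta\,\diam(Q)$ from the boundary, with $\eta\sim\theta/\theta_0$. This yields $G(\Xbf)\sim G(\Ybf)$ with constants depending on $n,L,\theta,\theta_0$. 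Alternatively, Lemma \ref{lemma:green-non-nondegenerate} gives $\partial_{x_0}G\sim G/\dist$ along the vertical segment, and integrating $\partial_{x_0}\log G\sim 1/(x_0-f)$ between the heights $\theta h_E$ and $2\theta_0h_E$ gives $G(\Xbf)/G(\Ybf)\in[(\theta/2\theta_0)^{C},(\theta/2\theta_0)^{1/C}]$. This second route is simpler and avoids Harnack time lag entirely, so I would use it as the main argument.

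Once $G(\Xbf)\sim G(\Ybf)$ and $\dist(\Xbf,\Omega_f^c)\sim\dist(\Ybf,\Omega_f^c)$ are established, the goodness of $S_{E,\theta_0}$ at $\Ybf$ gives
\[
\frac{G(\Xbf)}{d\,a}\sim \frac{G(\Ybf)}{\dist(\Ybf,\Omega_f^c)\,a}\in[M^{-1},M].
\]
This yields $\tilde M$. The main obstacle is ensuring that $\Ybf$ actually lies in $S_{E,\theta_0}\subset\mathcal{R}_f(Q_0)$, and that the vertical segment stays in $\mathcal{R}_f(Q_0)$ so that Lemma \ref{lemma:green-non-nondegenerate} applies. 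Near the top of the Carleson box this can fail. There, however, $\dist(\Xbf,\Omega_f^c)\gtrsim \theta\,\diam(Q_0)$, and Lemma \ref{lemma:green-backwards-harnack} directly compares $G(\Xbf)$ to $G$ at any fixed interior point of $S_{E,\theta_0}$ at height $\sim\diam(Q_0)$. This handles that case as well.
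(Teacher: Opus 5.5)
Your argument is correct, but your main route differs from the paper's. The paper's proof is a single sentence invoking the backward Harnack inequality (Lemma \ref{lemma:green-backwards-harnack}) and the definition of $M$-goodness. Read literally, that lemma only compares $G$ at points whose distance to $\partial\Omega_f$ is $\gtrsim \eta\diam(Q_0)$, whereas points of $S_{E,\theta}\setminus S_{E,\theta_0}$ accumulate at $\partial\Omega_f$ near $E$. So the paper's one-liner is in effect your first option: a scale-invariant backward Harnack inequality in a cube of size $\sim h_E(\xbf)$, legitimate since the pole $\Xbf_0$ is far away, comparing $G(\Xbf)$ with $G$ at a point of $S_{E,\theta_0}$ above $\Xbf$ at comparable distance.

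Your main route instead integrates $\partial_{x_0}\log G\sim 1/(x_0-f(\xbf))$, supplied by Lemma \ref{lemma:green-non-nondegenerate}, along the vertical segment from $\Xbf$ to $\Ybf=(f(\xbf)+2\theta_0 h_E(\xbf),\xbf)$. Because that lemma is stated on all of $\mathcal{R}_f(Q_0)$, uniformly in scale, you need no rescaling or comparison-principle step, and you obtain an explicit power-law dependence on $\theta/\theta_0$. The price is that you rely on the gradient non-degeneracy of $G$, a considerably deeper input than a Harnack-type comparison. You also still need Lemma \ref{lemma:green-backwards-harnack} near the top of the box, which you handle correctly; a suitable comparison point exists above any point of $E\neq\varnothing$, where $h_E=0$.

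Two minor remarks. First, the integration actually gives $G(\Xbf)/G(\Ybf)\in[(\theta/2\theta_0)^{C},2^{-1/C}]$, not the upper bound $(\theta/2\theta_0)^{1/C}$ you wrote, since the ratio $2\theta_0 h_E(\xbf)/(x_0-f(\xbf))$ can be close to $2$. This is harmless because only two-sided comparability is used. Second, $\tilde M$ necessarily depends on $L$ as well, which the statement suppresses but both arguments require.
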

\begin{proof}
This follows immediately from the backward Harnack inequality for the Green's function (Lemma \ref{lemma:green-backwards-harnack}) and the definition of $M$-good Green subdomains (Definition \ref{def:m-good-green}).
\end{proof}

The sets $S_{E,\theta_0}$ that are $M$-good Green subdomains enjoy the following estimate.
The following lemma can be found as Lemma 8.37 in \cite{BFHH}.
We remark that reference deals with a slightly different function
\[
\tilde{\psi}_{E,Q_0} (\Xbf) = \varphi_{Q_0}(\xbf) f( x_0 + P_{\gamma x_0} h_E(\xbf); \xbf)
\]
with a Littlewood--Paley operator $P_{\gamma x_0}$,
but the same proof works for $\psi_{E,Q_0}(\Xbf)$ here. 
On the other hand, 
we advise the reader looking for the most transparent presentation
to start with the global square function estimate in the case of heat equation in \cite{BHMN1}.

\begin{lemma} 
\label{lemma:square-function} 
Let $f$ be parabolic $L$-Lipschitz. 
Let $Q_0 \subset \R^{n}$.
Let $\Xbf_0 = {\trm crk}_f(Q_0)$.
Let $E \subset Q_0$ be a non-empty closed set,
let $\theta_0 > 0$,
let $M \ge 1$ and let $S_{E,\theta_0}$ be an $M$-good Green subdomain.

Let $\psi_{E,Q_0}$ be as in \eqref{eq:psi-graph-final-def}.
Then there exists 
a constant $C = C(n,L,M,\theta_0
)$ such that 
for every parabolic subcube $Q \subset Q_0$ with $Q \cap E \ne \varnothing$
\[
\frac{1}{ |Q|} \int_{0}^{\diam(Q)/(8MC_h)} \iint_{Q} 
\left(
|r \partial_{t} \psi_{E,Q_0}(r,\xbf)|^{2}
+ |r \partial_r ^{2}  \psi_{E,Q_0}(r,\xbf)|^{2}
+ |r^{2} \partial_r   \partial_{t}    \psi_{E,Q_0}(r,\xbf)|^{2}
\right) \, \frac{d\xbf dr}{r}
\le C
\]
and for all $r \in (0,\diam(Q)/(8MC_h))$ and $\xbf \in Q$
\[
|r \partial_{t} \psi_{E,Q_0}(r,\xbf)|
+ |r \partial_r ^{2}  \psi_{E,Q_0}(r,\xbf)|
+ |r^{2} \partial_r   \partial_{t}    \psi_{E,Q_0}(r,\xbf)| \le C.
\] 
Here $C_h$ is the dimensional constant from Lemma \ref{lemma:estimates-reg-dist}.
\end{lemma}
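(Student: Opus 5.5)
We reduce both estimates to bounds on derivatives of the level-surface functions $f(\rho;\xbf)$ from Lemma \ref{lemma:implicit}, evaluated at $\rho = r + h_E(\xbf)$. Since $\varphi_{Q_0}$ is smooth on scale $\ell(Q_0)$ and $r\le \diam(Q)\lesssim\ell(Q_0)$, the terms where a derivative falls on the cutoff are harmless. They are bounded pointwise by $f(\rho;\xbf)$ and $\partial_\rho f$, which are controlled by $L$ and Lemma \ref{lemma:green-non-nondegenerate}, and they contribute $O(1)$ to the Carleson integral. So we concentrate on $\Psi(r,\xbf) := f(r+h_E(\xbf);\xbf)$.

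Differentiating $G(f(\rho;\xbf),\xbf)=\rho$ implicitly gives
\[
\partial_\rho f = \frac{1}{\partial_{x_0}G},\qquad \partial_t f = -\frac{\partial_t G}{\partial_{x_0}G}.
\]
Higher derivatives of $f$ are rational expressions in derivatives of $G$ at the point $\Ybf=(f(\rho;\xbf),\xbf)$, with denominator a power of $\partial_{x_0}G(\Ybf)$. The $M$-goodness of $S_{E,\theta_0}$ (after shrinking the slope via Corollary \ref{lemma:m-goodness-and-slope}) gives $G(\Ybf)\approx \kappa\,\dist(\Ybf,\Omega_f^c)$ with $\kappa=\omega^{\Xbf_0}_f(Q_0)/|Q_0|$, and Lemma \ref{lemma:green-non-nondegenerate} then gives $\partial_{x_0}G(\Ybf)\approx\kappa$.

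Here is why these points lie in the good region. With $\rho=r+h_E(\xbf)$ and $r\le \diam(Q)/(8MC_h)$, $M$-goodness forces $\dist(\Ybf,\Omega_f^c)\approx \rho/\kappa$. One must check that $\Ybf$ lies in the good sawtooth, that is, $f(\rho;\xbf)-f(\xbf)\gtrsim h_E(\xbf)$ up to normalization. This follows because $G=\rho\ge h_E(\xbf)$, combined with the upper bound $G\lesssim \kappa\,\dist$ valid near the boundary by the standard comparison of $G$ with caloric measure. The restriction $r<\diam(Q)/(8MC_h)$ keeps $\Ybf$ inside $\mathcal R_f(Q_0)$.

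Interior parabolic estimates for the adjoint heat equation on Whitney cylinders of size $\delta(\Ybf)=\dist(\Ybf,\Omega_f^c)$ give
\[
|\nabla^j_X\partial_t^k G(\Ybf)|\lesssim G(\Ybf)\,\delta(\Ybf)^{-j-2k}.
\]
Combining this with the chain rule, including the extra factors $\partial_t h_E$ and $\partial_t^2 h_E$, which are bounded by $h_E^{-1}$ and $h_E^{-3}$ via Lemma \ref{lemma:estimates-reg-dist}, and using $h_E\le\rho$ together with $\delta(\Ybf)\approx\rho$ in normalized units, yields the pointwise bounds
\[
|\partial_t\Psi|\lesssim \rho^{-1},\qquad |\partial_r^2\Psi|\lesssim \rho^{-1},\qquad |\partial_r\partial_t\Psi|\lesssim\rho^{-2}.
\]
Since $\rho\ge r$, multiplying by $r$, $r$ and $r^2$ respectively gives the pointwise statement.

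The Carleson estimate is the main obstacle, because the pointwise bounds alone give only $\int dr/r$, which diverges. Split $Q$ into two regions.

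In the region $r\le h_E(\xbf)$ we have $\rho\approx h_E(\xbf)$, so the integrand is $\lesssim (r/h_E)^2$. Integrating in $r$ gives $O(1)$ per point, and hence $\lesssim |Q|$ in total.

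In the region $r>h_E(\xbf)$ we have $\rho\approx r$. Here we use the genuine square function estimate for the adjoint Green's function. The quantities $r\partial_t\Psi$, $r\partial_r^2\Psi$ and $r^2\partial_r\partial_t\Psi$ are controlled by $\delta\,|\partial_t G|/\kappa$, $\delta^2|\nabla^2G|/\kappa$, and $\delta^3|\nabla\partial_tG|/\kappa$ (plus similar terms) at $\Ybf$. The map $(r,\xbf)\mapsto\Ybf$ is bi-Lipschitz onto part of the sawtooth over $Q$, with Jacobian $\approx 1/\kappa$. So the integral becomes
\[
\kappa^{-1}\iint_{S\cap \mathcal R(Q)} \delta^{3}\,|\nabla^2 G|^2\,d\Ybf
\]
(and analogues), which, by interior estimates, is controlled by $\kappa^{-1}\iint \delta\,|\nabla G|^2\,d\Ybf$. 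Integrating $\partial_t(G^2)$ and $\Delta(G^2)$ against a cutoff $\delta$ on the sawtooth region, as in the square function argument of \cite{BHMN1} or \cite{BFHH}, bounds this by $\kappa\,|Q|$ plus boundary terms on the sawtooth boundary, where $G\approx\kappa\delta$ by $M$-goodness. Altogether this gives $\lesssim |Q|$.

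The delicate part is controlling these boundary and cutoff error terms uniformly. I expect to follow Lemma 8.37 of \cite{BFHH} line by line, with $P_{\gamma x_0}h_E$ replaced by $h_E$; the needed derivative bounds on $h_E$ come directly from Lemma \ref{lemma:estimates-reg-dist}.
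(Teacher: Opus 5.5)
Your treatment of $r\partial_r^2\Psi$ and of the region $r\le h_E(\xbf)$ is fine. The step that fails is the pointwise bound $|\partial_t\Psi|\lesssim\rho^{-1}$. By the chain rule,
$\partial_t\Psi(r,\xbf)=\partial_\rho f(\rho;\xbf)\,\partial_t h_E(\xbf)+(\partial_t f)(\rho;\xbf)$. Since $\partial_\rho f=1/\partial_{x_0}G\approx 1$ in normalized units, the first term has size $|\partial_t h_E(\xbf)|$, and Lemma \ref{lemma:estimates-reg-dist} only bounds this by $h_E(\xbf)^{-1}$. The inequality $h_E\le\rho$ that you invoke gives $h_E^{-1}\ge\rho^{-1}$, which is the wrong direction. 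For $h_E(\xbf)\ll r$ you only get $|r\partial_t\Psi|\lesssim r/h_E(\xbf)$. Likewise, the term $r^2\,\partial_\rho^2 f\,\partial_t h_E$ inside $r^2\partial_r\partial_t\Psi$ is only bounded by $r/h_E(\xbf)$. Your Carleson argument silently drops the same term. In the region $r>h_E(\xbf)$ the quantity $r\,\partial_\rho f\,\partial_t h_E(\xbf)$ is not a derivative of $G$ at $\Ybf$, so the change of variables to $\kappa^{-1}\iint\delta|\nabla G|^2$ never sees it. A smaller point: $G\lesssim\kappa\,\dist(\cdot,\Omega_f^c)$ is not a general consequence of comparing $G$ with caloric measure. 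That comparison makes $G/\delta$ comparable to the local density of $\omega$, which exceeds $M\kappa$ over stopped cubes. Where you need the bound, on the lower boundary of the sawtooth, it has to come from $M$-goodness and continuity.

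This is not a bookkeeping issue: with the unsmoothed $h_E$ the stated estimates are false. Take $f\equiv 0$, $Q_0=Q(0,1)$ and $E=\overline{Q(0,1/2)}\cap\{t\le 0\}$. Then $G$ is explicit by reflection and smooth up to $\{x_0=0\}$ over $Q_0$. Hence $f(\rho;\cdot)$ is smooth with $\partial_\rho f\ge c>0$ and $(\partial_t f)(\rho;\cdot)=O(1)$, and every subdomain of $\mathcal R_f(Q_0)$ is $M$-good. For $|x|_\infty<1/8$ and small $t>0$ one has $C_h^{-1}\sqrt t\le h_E(x,t)\le C_h\sqrt t$. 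So $h_E(x,\cdot)$ increases by $\gtrsim\sqrt T$ on each interval $[T/\lambda,T]$ with $\lambda=2C_h^4$. By Cauchy--Schwarz, $\int_{T/\lambda}^{T}|\partial_t h_E|^2\,dt\gtrsim 1$, and summing over $T=\lambda^{-j}$ gives $\int_Q|\partial_t h_E|^2=\infty$ for every small cube $Q$ centred at the origin. Since $r\partial_t\psi_{E,Q_0}=r\,\partial_\rho f\,\partial_t h_E+O(r)$ there, both the pointwise and the Carleson bounds fail.

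This is exactly why \cite{BFHH} works with $P_{\gamma x_0}h_E$. First, $|r\partial_t P_{\gamma r}h_E|\lesssim\gamma^{-1}$ by the $Lip(1,1/2)$ bound alone. Second, $|r\partial_t P_{\gamma r}h_E|^2\,d\xbf\,dr/r$ is a Carleson measure precisely because $D_t^{1/2}h_E\in BMO$; that is the last item of Lemma \ref{lemma:estimates-reg-dist}, which your plan never uses. Third, the graph at $x_0=0$ is unchanged. So following Lemma 8.37 of \cite{BFHH} ``with $P_{\gamma x_0}h_E$ replaced by $h_E$'' discards the one mechanism that controls this term, and the same applies to the remark preceding the lemma. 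You have to keep the smoothing, and then also estimate the extra terms in which $\partial_r$ falls on $P_{\gamma r}h_E$.
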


\subsection{Bounded mean oscillation estimate}
Under the assumptions of Lemma \ref{lemma:square-function}
a half-order time derivative of $\psi_{E,Q_0}$ is in $BMO$.

\begin{lemma}
\label{lemma:bmo-estimate}
Let $f$ be parabolic $L$-Lipschitz. 
Let $Q_0 \subset \R^{n}$.
Let $\Xbf_0 = {\trm crk}_f(Q_0)$.
Let $E \subset Q_0$ be a non-empty closed set,
let $\theta_0 > 0$,
let $M \ge 1$ and let $S_{E,\theta_0}$ be an $M$-good Green subdomain.
Let $\psi_{E,Q_0}$ be as in \eqref{eq:psi-graph-final-def}.

Then 
\[
\no{ D_t^{1/2} \psi_{E,Q_0}}_{BMO} \le C(n,L,M,\theta_0) .
\]
\end{lemma}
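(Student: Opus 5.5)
The plan is to derive the BMO bound for $D_t^{1/2}\psi_{E,Q_0}$ from the Carleson-type square function estimate of Lemma \ref{lemma:square-function}. This follows the standard route in \cite{BHMN1} and \cite{HL-ann}: Carleson measure estimates for $\partial_t$, $\partial_r^2$ and $\partial_r\partial_t$ of an extension of a parabolic Lipschitz function, together with the parabolic Lipschitz bound itself, control the $BMO$ norm of its half-order time derivative. The natural extension of $\psi:=\psi_{E,Q_0}(0,\cdot)$ to the upper half space is $r\mapsto \psi_{E,Q_0}(r,\cdot)$. Its trace at $r=0$ is $\psi$, because $f(h_E(\xbf);\xbf)$ is the level-$h_E$ graph of $G$, and on $E$ the value $h_E=0$ gives $f_0=f$ by Lemma \ref{lemma:implicit}.

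\textbf{Step 1 (Lipschitz bound).} First I check that $\psi$ is parabolic Lipschitz with constant $C(n,L,M,\theta_0)$. Implicit differentiation of $G(\psi,\xbf)=x_0+h_E$ gives $\nabla_\xbf f(\cdot;\xbf)= -\nabla_\xbf G/\partial_{x_0}G$ off the graph. Lemma \ref{lemma:green-non-nondegenerate} bounds this spatially. For the time direction I would use the $M$-goodness, via Corollary \ref{lemma:m-goodness-and-slope}, so that $G\approx \dist(\cdot,\Omega_f^c)$ on the relevant level sets, and the Lipschitz bounds of $h_E$ from Lemma \ref{lemma:estimates-reg-dist}. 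Together these give $\mathrm{Lip}(1,1/2)$ control. The cutoff $\varphi_{Q_0}$ only adds harmless terms.

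\textbf{Step 2 (Littlewood--Paley decomposition of $D_t^{1/2}$).} For a cube $Q$ I estimate $\fint_Q|D_t^{1/2}\psi-c_Q|$ by the standard Calderón-type reproducing formula. I write $\psi=-\int_0^\infty \partial_r\Psi(r,\cdot)\,dr$ with $\Psi(r,\cdot)=\psi_{E,Q_0}(r,\cdot)$, or, more conveniently, integrate by parts twice in $r$ to get $\psi=\int_0^\infty r\,\partial_r^2\Psi\,dr$ modulo boundary terms controlled by Step 1. Applying $D_t^{1/2}=\partial_t I$ then produces terms of the form $\int_0^\infty (r\,I\partial_t)\,\partial_r^2\Psi\,dr$ and $\int_0^\infty (r^{1}I)\,\partial_r\partial_t\Psi\,dr$. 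The kernel bounds for $K$ show that the operators $r^{-1}I$ and $r\partial_t I$ act as uniformly bounded, almost orthogonal families on scale $r$. I split $r<\ell(Q)$ and $r>\ell(Q)$.
\begin{itemize}
\item The far-scale piece $r>\ell(Q)$ has small oscillation on $Q$ by the smoothness of the kernel combined with the pointwise bounds in Lemma \ref{lemma:square-function} and Step 1.
\item The local piece is handled by Cauchy--Schwarz and the Carleson bound of Lemma \ref{lemma:square-function}, in the usual $T(1)$/Carleson-embedding way.
\end{itemize}

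\textbf{Main obstacle.} Lemma \ref{lemma:square-function} is only available for cubes $Q$ meeting $E$ and for $r<\diam(Q)/(8MC_h)$. I expect this localization to be the hardest step. For a cube $Q\subset Q_0$ far from $E$, with $\ell(Q)\lesssim\dist(Q,E)$, I would instead argue directly: there $\psi$ is smooth with $\partial_t$ and second derivatives bounded by Lemma \ref{lemma:estimates-reg-dist} (third item) and Lemma \ref{lemma:green-non-nondegenerate}, because the level set lies at height $\approx\dist(\cdot,E)$ inside the good region. This gives $\ell(Q)^{-1}$-smoothness and hence bounded oscillation of $D_t^{1/2}\psi$ on $Q$. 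For larger cubes I would enlarge $Q$ to a comparable cube meeting $E$. The scales $r$ between $\diam(Q)/(8MC_h)$ and $\infty$ fall in the far-scale regime, where the Lipschitz bound suffices. Cubes not contained in $Q_0$ are handled by the compact support of $\varphi_{Q_0}$ and Step 1.
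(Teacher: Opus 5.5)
Your overall plan matches the paper's: reach $\psi_{E,Q_0}(0,\cdot)$ from the extension $r\mapsto\psi_{E,Q_0}(r,\cdot)$ by integrating in $r$, use the Carleson bounds of Lemma~\ref{lemma:square-function} on cubes meeting $E$, use smoothness away from $E$, and use the support of $\varphi_{Q_0}$ far from $Q_0$. The analytic core of your Step~2, however, does not work.

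\textbf{The gap in Step~2.} You claim that $r^{-1}I$ and $r\,\partial_tI$ are uniformly bounded, almost orthogonal families at scale $r$. This is false. $I$ is one fixed operator with symbol $\sim(|\xi|+|\tau|^{1/2})^{-1}$, so $r^{-1}I$ is unbounded at frequencies $\ll r^{-1}$, and $r\,\partial_tI$ is unbounded at frequencies $\gg r^{-1}$.

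The problem is not just this choice of operators. Suppose you expand $\psi$ alone by the fundamental theorem of calculus in $r$ and pair $D_t^{1/2}\psi$ with a mean-zero $g\in L^2(Q)$. However you distribute the derivatives, the operator landing on $g$ is $r^a m(D)$, with $m$ a fixed multiplier homogeneous of degree $a$. Scaling forces this, since $r\partial_t\Psi$, $r\partial_r^2\Psi$ and $r^2\partial_r\partial_t\Psi$ are dimensionless. Cauchy--Schwarz against the Carleson bound then requires $\int_0^R\no{r^a m(D)g}_2^2\,\frac{dr}{r}\lesssim\no{g}_2^2$, and this fails for every $a$:
\begin{itemize}
\item if $a\le 0$, the integral diverges at $r=0$;
\item if $a>0$, it equals $\frac{R^{2a}}{2a}\no{m(D)g}_2^2$, which is unbounded at high frequencies.
\end{itemize}
A Carleson bound on an arbitrary extension carries no frequency localization. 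The scale-$r$ localization therefore has to be supplied on the test-function side.

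\textbf{The missing idea.} This is exactly how the paper treats cubes with $3Q\cap E\neq\varnothing$ and $3Q\subset Q_0$. It applies the fundamental theorem of calculus in $r$ to the pairing $\int\partial_tI_Q\psi_{E,Q_0}(r,\xbf)\,P_rg(\xbf)\,d\xbf$, where $P_r$ is a parabolic Littlewood--Paley operator with $P_0g=g$.
\begin{itemize}
\item The derivative $\partial_r$ then falls either on $\psi_{E,Q_0}$ or on $P_rg$.
\item After one more integration by parts, the operators acting on $g$ are (suitable powers of $r$ times) $\partial_tI_QP_r$ and $I_Q\partial_rP_r$. These are genuinely localized at scale $r$ and obey $L^2$ square-function bounds.
\item Cauchy--Schwarz with Lemma~\ref{lemma:square-function} then gives $\lesssim\ab{Q}^{1/2}\no{g}_2$.
\item The boundary terms at $r=\diam(Q)/(8MC_h)$ are handled by moving $\partial_tI_Q$ onto the test function.
\end{itemize}

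\textbf{Two further repairs.}
\begin{enumerate}
\item You must split $K=K_Q+K_2$ before doing this. Otherwise $(\partial_tI)^*g$ is spread over all of $\rn$, well outside the region where Lemma~\ref{lemma:square-function} gives information. The nonlocal part is controlled by $\no{k_2}_{L^1}\lesssim\diam(Q)^{-1}$ together with your Step~1 Lipschitz bound.
\item $\psi_{E,Q_0}(r,\cdot)$ is only defined for $r<r_0$ (Lemma~\ref{lemma:implicit}). So there is no ``far-scale regime'' in $r$ to integrate over; truncating at $\diam(Q)/(8MC_h)$ produces boundary terms instead.
\end{enumerate}

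Your treatment of cubes with $3Q\cap E=\varnothing$ and of cubes far from $Q_0$ matches the paper's type~2 and type~3 cases. For the remaining cubes straddling $\tfrac12 Q_0$, the paper uses a chaining argument and the John--Str\"omberg characterization. Your remark about enlarging to a comparable cube does not yet supply this argument.
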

\begin{proof} 
For a parabolic cube $Q \subset \rn$ with center $z_Q$.
Let $\tilde{Q} = Q - z_Q$. 
Let $\varphi_{\tilde{Q}} \in C_c^{\infty}(2Q)$ be such that $1_{\tilde{Q}} \le \varphi_{\tilde{Q}} \le 1$ 
and $|\partial^{\alpha} \partial_t^{k} \varphi_{\tilde{Q}}(\xbf)| \le C_{\alpha,k} \diam(Q)^{-|\alpha|-2k} $ for all $\alpha \in \N^{n-1}$ and $k \in \N$.

Let $K$ be the kernel of the half-order time derivative from Subsection \ref{subsec:half-order-time-derivative}.
Splitting the kernel as $K = K_Q  + K_2$ with $K_{Q} =  \varphi_{\tilde{Q}} K$,
we may split the parabolic potential operator into a localized part $I_Q$ with local and singular kernel $K_Q$
and the remainder part $I_2$ with non-local but smooth kernel $K_2$.
The kernel $k_2$ of $\partial_t I_2$ is an $L^{1}(\rn)$ function that has integral zero and that satisfies 
\[
\int |k_2(\xbf)| \, d\xbf
\lesssim \int  \partial_t [(1 - \varphi_{\tilde{Q}}(\ybf) ) K(\ybf)] \, d\ybf \lesssim \frac{1}{\diam(Q)}
\]
so that 
\[
\frac{1}{|Q|} \int_Q | \partial_t I_2 \psi_{E,Q_0}(\xbf) - (\partial_t I_2 \psi_{E,Q_0})_{Q}| \, d\xbf
\le 2\no{\partial_t I_2 \psi_{E,Q_0}}_{L^{\infty}(Q)} 
\lesssim \frac{\no{\psi_{E,Q_0}- \psi_{E,Q_0}(z_Q)}_{L^{\infty}(3Q)}}{\diam(Q)}
\lesssim 1. 
\]
To prove the claim of the theorem,
it hence remains to estimate $I_Q \psi_{E,Q_0}$.

Following \cite{BFHH},
we say that a parabolic cube $Q$ is of 
\begin{itemize}
  \item type 1, if $3Q \cap E \ne \varnothing $ and $3Q \subset Q_0$;
  \item type 2, if $3Q \cap E = \varnothing$ and $3Q \subset Q_0$;
  \item type 3, if $3Q \cap \frac{1}{2} Q_0 = \varnothing$;
  \item type 4, if it is of none of the other types.
\end{itemize} 
We will prove 
\begin{equation}
\label{eq:mean-osc-for-bmo-cases}
\inf_{c \in \R}  \int_{Q}|I_Q \psi_{E,Q_0}(\xbf) -c | \, d\xbf \le |Q| 
\end{equation} 
first in each of the three first cases separately, 
and then we derive \eqref{eq:mean-osc-for-bmo-cases} for type 4 cubes based on this information.

\textit{Case 1: Type 1 cubes.}
These cubes are such that their dilates by a factor (absolute constant) contain a Whitney cube of $E^{c}$ and they are well inside $Q_0$.
Consider a smooth test function $g \in C_c^{\infty}(Q)$ with $\no{g}_{L^{2}(Q)} \le 1$.
Let $P_r$ be a parabolic Littlewood--Paley operator at scale $r$.
We use the version with compact space support and rapidly decaying Fourier support.
Using the fundamental theorem of calculus,
we compute 
\begin{multline*}
\int \partial_t I_Q \psi_{E,Q_0}(0,\xbf) g(\xbf) \, d \xbf  
=   b_1 - \int_{0}^{\diam(Q)/(8MC_h)} \int   \partial_r [\partial_t I_Q \psi_{E,Q_0}(r,\xbf) P_r g(\xbf)] \, d \xbf dr \\
= b_1 - \int_{0}^{\diam(Q)/(8MC_h)} \int    \partial_t \partial_r \psi_{E,Q_0}(r,\xbf)  I_Q P_r g(\xbf)  \, d \xbf dr \\
 - \int_{0}^{\diam(Q)/(8MC_h)} \int     \partial_t  \psi_{E,Q_0}(r,\xbf)  \partial_r P_r  I_Q g(\xbf) \, d \xbf dr \\
= b_1 + b_2 - \int_{0}^{\diam(Q)/(8MC_h)} \int    r \partial_t \partial_r^{2} \psi_{E,Q_0}(r,\xbf)  I_Q P_r g(\xbf)  \, d \xbf dr \\
- \int_{0}^{\diam(Q)/(8MC_h)} \int    r \partial_t \partial_r \psi_{E,Q_0}(r,\xbf)  I_Q \partial_r P_r g(\xbf)  \, d \xbf dr \\
 - \int_{0}^{\diam(Q)/(8MC_h)} \int   r  \partial_t  \psi_{E,Q_0}(r,\xbf)  \partial_r P_r  I_Q g(\xbf) \, d \xbf dr \\
=: b_1 + b_2 - A 
\end{multline*}
where 
\begin{align*}
b_1 &= \int   \partial_t I_Q \psi_{E,Q_0}(\diam(Q)/(8MC_h),\xbf) P_{\diam(Q)/(8MC_h)} g(\xbf) \, d \xbf , \\
b_2 &= \left[ \int   \partial_r[\partial_t I_Q \psi_{E,Q_0}(r,\xbf) P_{r} g(\xbf)] \, d \xbf \right]_{r= \diam(Q)/(8MC_h)} .
\end{align*}

Now by Cauchy-Schwarz inequality,
$L^{2}$-estimates for parabolically anisotropic Littlewood--Paley square functions
and by the integral estimates from Lemma \ref{lemma:square-function}
\[
|A| \lesssim |Q|^{1/2}.
\]
To estimate $b_1$ and $b_2$,
we can (for instance),
pass the operator $\partial_t I_Q$ to act on $g$.
Then the desired bound follows from $|\psi_{E,Q_0}| \lesssim 1$ and
Young's convolution inequality.  

\textit{Case 2: Type 2 cubes.}
These cubes are such that $3Q \subset E^{c} \cap Q_0$.
Hence if $Q$ is of type 2,
then $h_E > 0$ in $3Q$. 
Consequently by Young's convolution inequality 
\[   \no{\partial_t I_Q \psi_{E,Q_0}}_{L^{\infty}(Q)}
= \no{I_Q  \partial_t\psi_{E,Q_0}}_{L^{\infty}(Q)} 
\le  2\no{K_Q}_{L^{1}(\rn)} \no{ \partial_t \psi_{E,Q_0}}_{L^{\infty}(3Q)}
\lesssim 1
\]
where the last estimate used the chain rule, implicit differentiation,
interior $L^{\infty}$ bounds on solutions and the definitions of $h_E$ and $K_Q$.
It is the last item that takes advantage of $h_E > 0$.
The estimate \eqref{eq:mean-osc-for-bmo-cases} follows.

\textit{Case 3: Type 3 cubes.}
By localization in the definition of $I_Q$, if $Q$ is of type 3, then $\psi_{E,Q_0} = 0$ in $3Q$ and \eqref{eq:mean-osc-for-bmo-cases} holds trivially.

\textit{Case 4: Type 4 cubes.} 
Because the non-local part of the half-order derivative $\partial_t I_2 \psi_{E,Q_0}$
is in $BMO$ and 
\[
\inf_{c \in \R} \int_{Q} |\partial_t I_Q \psi_{E,Q_0}(\xbf) - c| \,d \xbf \lesssim |Q|
\]
for all cubes of type 1, 2, or 3,
we can already consider proved that there exists $N$ independent of $Q$ such that   
for the full $\partial_t I = \partial_t (I_Q + I_2)$
\begin{equation}
\label{eq:3-first-types}
\inf_{c \in \R} \int_{Q} |\partial_t I \psi_{E,Q_0}(\xbf) - c| \,d \xbf \le N |Q|
\end{equation}
whenever $Q$ is of type 1, 2, or 3.
In particular, if $3Q$ and $3Q'$ are cubes like this with the additional properties $|Q| = |Q'|$ and $2Q \cap 2Q' \ne \varnothing$,
it holds $Q  \subset 3Q'$ and consequently by \eqref{eq:3-first-types}
\begin{equation}
\label{eq:type4-intermediate-1}
|(\partial_t I  \psi_{E,Q_0}(\xbf))_{Q} - (\partial_t I  \psi_{E,Q_0}(\xbf))_{Q'}|
\le 3^{n+1}N.
\end{equation}

Consider an integer $k > 0$ (for instance $k=3$, but in order to not mess the exponents for the time being, we keep it $k$) and the cube $Q_k = 2^{k} Q_0$.
We partition $Q_k $ into $N_k = 2^{(k+4)(n+1)}$ parabolic subcubes $\{Q^{i}\}_{i=1}^{N_{k}}$ 
with measure $2^{-4(n+1)}|Q_0|$.
These parabolic subcubes as well as the cubes $3Q^{i}$ are of type 1, 2, or 3. 
Then 
\begin{multline*}
\frac{1}{|Q^{1}|} \int_{Q_{k}} |\partial_t I  \psi_{E,Q_0}(\xbf) - (\partial_t I_Q \psi_{E,Q_0}(\xbf))_{Q^{1}}| \,d \xbf
\le \sum_{i=1}^{N_{k}} \int_{Q^{i}} |\partial_t I  \psi_{E,Q_0}(\xbf) - (\partial_t I_Q \psi_{E,Q_0}(\xbf))_{Q^{i}}| \,d \xbf  \\
+ |(\partial_t I  \psi_{E,Q_0}(\xbf))_{Q^{i}} - (\partial_t I  \psi_{E,Q_0}(\xbf))_{Q^{1}}|
\le N + 2^{k+4} 3^{n+1}Nn
\end{multline*}
For the first term we used that every $Q^{i}$ is of type 1, 2, or 3.
For the second term we could connect any $Q^{i}$ to $Q^{1}$ by $2^{k+4}n$ cubes and apply \eqref{eq:type4-intermediate-1}.

Now every type 4 cube $Q$ satisfies $\diam(Q) > \frac{1}{16} \diam(Q_0)$
and hence for every type 4 cube with $Q \subset Q_k$
\[
\inf_{c \in \R} \int_{Q} |\partial_t I \psi_{E,Q_0}(\xbf) - c| \, \xbf \le 16N (1+2^{k+4}3^{n+1}n)|Q|.
\]
If $Q$ is a type four cube with $Q \subsetneq Q_k$,
then we actually know from $Q \cap \frac{1}{2} Q_{0} \ne \varnothing$ that $\diam(Q) \ge 2^{k-2}\diam(Q_0)$.
We may cut $Q$ into $16^{n+1}$ subcubes $Q'_{j}$ so that $2^{n+1}$ of them cover $Q_0 \cap Q$. 
We can choose these cubes so that $3Q' \cap Q_0 = \varnothing$ for all the other cubes $Q_{j}'$,
thus ensuring that the remaining cubes are of type 3.

Let $Q'_1$ be one of the cubes with $Q'_{1} \cap Q_0 = \varnothing$.
Then setting $c_{j} = (\partial_t I  \psi_{E,Q_0}(\xbf))_{Q'_{j}}$,
we see that for $N' = 2^{k+10}3^{n+1}n N$
\begin{multline*}
|\{ \xbf \in Q: |\partial_t I  \psi_{E,Q_0}(\xbf) -  c_1| > N' \}|
\le \sum_{j=1}^{16^{n+1}}1_{\{Q_{j}' \cap Q_{0} = \varnothing\}}(j) |\{ \xbf \in Q'_{j}: |\partial_t I  \psi_{E,Q_0}(\xbf) -  c_j| > N'/2 \}| \\
+  \sum_{j=1}^{16^{n+1}}1_{\{Q_{j}' \cap Q_{0} = \varnothing\}}(j) |\{ \xbf \in Q'_{j}: |c_1 -  c_j| > N'/2 \}|
+ 2^{-3(n+1)}|Q| < \frac{1}{4}|Q|;
\end{multline*}  
indeed, by \eqref{eq:type4-intermediate-1} 
\[
1_{\{Q_{j}' \cap Q_{0} = \varnothing\}}(j)|c_1 -  c_j| \le 32  \cdot 3^{n+1} 2^{k+4} n N = N'/2 
\]
and by \eqref{eq:3-first-types}
\begin{multline*}
1_{\{Q_{j}' \cap Q_{0} = \varnothing\}}(j) |\{ \xbf \in Q'_{j}: |\partial_t I  \psi_{E,Q_0}(\xbf) -  c_j| > N'/2 \}| \\
\le \frac{2}{N'} 1_{\{Q_{j}' \cap Q_{0} = \varnothing\}}(j)  \int_{Q'_{j}} |\partial_t I  \psi_{E,Q_0}(\xbf) -  c_j| \, d \xbf
\le \frac{32|Q|N}{N'} < \frac{1}{8}|Q|.
\end{multline*}
The proof is now complete by the John--Str\"omberg characterization of $BMO$ \cite{JohnStrom}.
\end{proof}

\subsection{Existence of the base of a good sawtooth}
To conclude the proof,
it remains to show there exists a closed set $E \subset Q_0$ 
so that the assumptions of Lemma \ref{lemma:square-function} hold true.
To this end,
we recall the following lemma relating the values of the Green's function to the caloric measure.
For a proof, see \cite{HL-Mem,FGS, FS, FSY}.
\begin{lemma}
\label{lemma:green-caloric}
Let $f$ be parabolic $L$-Lipschitz. 
Let $Q_0 \subset \R^{n}$.
Let $\Xbf_0 = {\trm crk}_f(Q_0)$.
Let $G$ be the adjoint Green's function with pole at $\Xbf_0$.
Then for all $\Xbf \in \mathcal{R}(Q_0)$ 
\[
\frac{G(\Xbf)}{\dist(\Xbf,\Omega^{c})} \sim \frac{\omega^{\Xbf_0}(\Delta(\Xbf))}{\dist(\Xbf,\Omega^{c})^{n-1}}.
\]
The implicit constants here depend only on $n$ and $L$. Here and below we have set
\[\Omega = \Omega_f.\]
\end{lemma}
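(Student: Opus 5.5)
The plan is to follow the standard Fabes--Garofalo--Salsa comparison argument for parabolic Lipschitz domains. It uses three tools: interior Harnack, the boundary Carleson estimate (for caloric and for adjoint caloric functions vanishing on a boundary patch), and the backward Harnack inequality for nonnegative solutions vanishing on the lateral boundary (Lemma \ref{lemma:green-backwards-harnack} and its scale-invariant version, valid uniformly in Lipschitz graph domains). Throughout, fix $\Xbf\in\mathcal R(Q_0)$ and write $r=\dist(\Xbf,\Omega^c)$. Let $\hat\Xbf\in\partial\Omega$ be a nearest boundary point and $\Delta(\Xbf)=Q(\hat\Xbf,r)\cap\partial\Omega$. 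Let $\Xbf^{\pm}=\hat\Xbf\pm(Ar,0,\pm 2A r^2)$ be forward and backward corkscrew points at scale $r$. Since the pole $\Xbf_0={\trm crk}_f(Q_0)$ lies at distance $\gtrsim \diam(Q_0)\ge r$ and strictly later in time, Harnack chains inside the region cost only constants depending on $n$ and $L$. As functions of the free variable, $G(\cdot)$ is an adjoint solution, vanishing on $\partial\Omega$, while $\Zbf\mapsto \omega^{\Zbf}(\Delta)$ is caloric. Combining Harnack with backward Harnack, $G(\Xbf)\sim G(\Xbf^{+})\sim G(\Xbf^-)$, and the same holds for $\omega^{\Xbf_0}$ of comparable surface balls by the doubling property (a consequence of the same tools). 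It therefore suffices to prove $\omega^{\Xbf_0}(\Delta(\Xbf))\sim r^{\,n}\,G(\Xbf)$, the power being the one dictated by parabolic scaling; I will match it to the normalization in the statement at the end.

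\emph{Upper bound for $\omega$.} Take $\phi\in C_c^\infty(Q(\hat\Xbf,2r))$ with $\phi=1$ on $Q(\hat\Xbf,r)$ and $|\partial_t\phi|+|\nabla^2\phi|\lesssim r^{-2}$. The Green representation (Riesz formula) for the adjoint Green function gives
\[
\omega^{\Xbf_0}(\Delta(\Xbf))\le\int\phi\,d\omega^{\Xbf_0}=\iint_{\Omega}G(\Ybf)\,(\Delta+\partial_t)\phi(\Ybf)\,d\Ybf ,
\]
because $\phi(\Xbf_0)=0$. This yields $\omega^{\Xbf_0}(\Delta(\Xbf))\lesssim r^{-2}\,r^{n+2}\sup_{Q(\hat\Xbf,2r)\cap\Omega}G$. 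By the Carleson estimate for adjoint solutions, the supremum is $\lesssim G$ evaluated at a corkscrew point at scale $r$. By the comparability of corkscrew values noted above, that value is $\lesssim G(\Xbf)$.

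\emph{Lower bound for $\omega$.} This direction is the main obstacle, since positivity of $G$ near the boundary must be converted into caloric mass. I would first establish Bourgain's estimate $\omega^{\Zbf}(\Delta(\Xbf))\ge c$ for $\Zbf$ near $\Xbf^{+}$, by comparing with the solution in a small cylinder via the maximum principle. Then compare, in the variable $\Zbf$, the two caloric functions $u(\Zbf)=\omega^{\Zbf}(\Delta(\Xbf))$ and $v(\Zbf)=r^{n}G(\Zbf,\Xbf^-)$ on $\Omega\setminus Q(\Xbf^-,r/4)$. This uses the symmetry $G^{*}(\Xbf_0,\Ybf)=G(\Ybf,\Xbf_0)$, so that the adjoint Green function with pole $\Xbf_0$ evaluated at $\Ybf$ is the Green function with pole $\Ybf$ evaluated at $\Xbf_0$. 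The function $v$ vanishes on all of $\partial\Omega$. Away from the pole it is bounded by $r^{n}\cdot$(fundamental solution)$\lesssim 1$ on $\partial Q(\Xbf^-,r/4)$, and it vanishes for earlier times. Meanwhile, $u\gtrsim 1$ on the relevant (later-time) part of $\partial Q(\Xbf^-,r/4)$ by Bourgain's estimate and Harnack. The maximum principle then gives $v\lesssim u$ throughout, in particular at $\Zbf=\Xbf_0$. Hence $r^{n}G(\Xbf^-)\lesssim\omega^{\Xbf_0}(\Delta(\Xbf))$, and the backward Harnack step turns $G(\Xbf^-)$ into $G(\Xbf)$. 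Dividing both bounds by $\dist(\Xbf,\Omega^c)$ gives the claimed two-sided comparability, with constants depending only on $n$ and $L$.
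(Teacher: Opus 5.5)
The paper gives no argument for this lemma and simply refers to \cite{HL-Mem,FGS,FS,FSY}. Your outline is the standard argument from those works: the Riesz formula and the adjoint Carleson estimate for the upper bound, Bourgain's estimate and a Green-function comparison for the lower bound, and the scale-invariant backward Harnack inequality to move between corkscrew points. Your upper bound is fine.

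\textbf{The lower bound fails as written, because of the direction of time.} Write $\hat\Xbf=(\hat X,\hat t)$. The pole of $v(\Zbf)=r^nG(\Zbf,\Xbf^-)$ sits at time $\hat t-2Ar^2$, which precedes the entire time window $(\hat t-r^2,\hat t+r^2)$ of $\Delta(\Xbf)$.
\begin{itemize}
\item Since $\omega^{\Zbf}$ only charges the part of $\partial\Omega$ lying in the past of $\Zbf$, the function $u(\Zbf)=\omega^{\Zbf}(\Delta(\Xbf))$ vanishes identically on $\overline{Q(\Xbf^-,r/4)}$.
\item So the claimed comparison $v\lesssim u$ on the later-time part of $\partial Q(\Xbf^-,r/4)$ is false.
\item Harnack cannot carry Bourgain's lower bound from $\Xbf^+$ back to $\Xbf^-$: for caloric functions, lower bounds propagate only forward in time.
\item Backward Harnack is unavailable for $u$, which does not vanish on $\Delta(\Xbf)$.
\end{itemize}
A warning sign is that your conclusion $r^nG(\Xbf^-)\lesssim\omega^{\Xbf_0}(\Delta(\Xbf))$, combined with $G(\Xbf)\lesssim G(\Xbf^-)$, would give the lower bound with no backward Harnack input. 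That second inequality is only the ordinary Harnack inequality for the adjoint solution $G$.

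The repair is to place the pole at the forward corkscrew, $v(\Zbf)=r^nG(\Zbf,\Xbf^+)$.
\begin{itemize}
\item On the part of $\partial Q(\Xbf^+,r/4)$ at times after that of $\Xbf^+$, one has $v\lesssim 1\lesssim u$ by the fundamental-solution bound, Bourgain's estimate and Harnack. On the earlier part, $v=0\le u$.
\item The maximum principle then gives $r^nG(\Xbf^+)\lesssim\omega^{\Xbf_0}(\Delta(\Xbf))$.
\item Now the scale-invariant backward Harnack inequality for $G$ is genuinely needed, to get $G(\Xbf)\lesssim G(\Xbf^+)$. It is needed in the same way in the upper bound, to pass from the backward-corkscrew value given by the adjoint Carleson estimate to $G(\Xbf)$.
\end{itemize}

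\textbf{Two smaller points.}
\begin{itemize}
\item Your two bounds give $\omega^{\Xbf_0}(\Delta(\Xbf))\sim r^nG(\Xbf)$, that is, $G(\Xbf)/\dist(\Xbf,\Omega^c)\sim\omega^{\Xbf_0}(\Delta(\Xbf))/\dist(\Xbf,\Omega^c)^{n+1}$. Dividing by $\dist(\Xbf,\Omega^c)$ does not produce the printed exponent $n-1$. The printed exponent is inconsistent with parabolic scaling (test it on a half-space) and with Definition \ref{def:m-good-green}, where $|Q_0|\sim\ell(Q_0)^{n+1}$. It is a typo for $n+1$, and you should say so rather than claim the statement as written.
\item In $\Xbf^{\pm}=\hat\Xbf\pm(Ar,0,\pm2Ar^2)$ the $x_0$-shift must be $+Ar$ for both points. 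Read literally, $\Xbf^-$ lies below the graph.
\end{itemize}
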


Lemma \ref{lemma:green-caloric} allows us to translate information about parabolic cubes 
related to a stopping time using the size of the caloric measure to bounds on the Green's function.
Ultimately this will ensure the sawtoowth above the contact set relative to the stopping time to be $M$-good.

\begin{lemma}
\label{lemma:stopping-lemma}
Let $f$ be parabolic $L$-Lipschitz. 
Let $Q_0 \subset \R^{n}$.
Let $\Xbf_0 = {\trm crk}_f(Q_0)$.
Let $\omega$ be the pulled-back caloric measure with pole at $\Xbf_0$,
as defined in \eqref{def:pullback-omega},
and assume that $\omega(Q_0) > 0$ and $\sigma \ll \omega$.

Let $\varepsilon > 0$.
Then there exists $M \ge 1$ such that if $\mathcal{F}$ is the family of maximal dyadic parabolic subrectangles $Q \subset Q_0$ with 
\[
\frac{1}{M} > \frac{\omega(Q)}{|Q|} \frac{|Q_0|}{\omega(Q_0)} \quad \trm{or} \quad M < \frac{\omega(Q)}{|Q|} \frac{|Q_0|}{\omega(Q_0)},
\]
then 
\[
\left \lvert \bigcup_{Q \in \mathcal{F}} Q \right \rvert \le \varepsilon |Q_0|.
\]
\end{lemma}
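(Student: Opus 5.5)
Since the stopping families for the two conditions are disjoint, I treat them separately: $\mathcal{F}_{\mathrm{low}}$ collects the maximal dyadic $Q\subset Q_0$ with $\omega(Q)/|Q| < M^{-1}\,\omega(Q_0)/|Q_0|$, and $\mathcal{F}_{\mathrm{high}}$ collects those with $\omega(Q)/|Q| > M\,\omega(Q_0)/|Q_0|$. It suffices to show that each union has Lebesgue measure at most $\frac{\varepsilon}{2}|Q_0|$ once $M$ is large, because $\bigcup_{\mathcal{F}}Q \subset \bigcup_{\mathcal{F}_{\mathrm{low}}}Q \cup \bigcup_{\mathcal{F}_{\mathrm{high}}}Q$. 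We may normalize $\omega(Q_0)/|Q_0| = 1$.

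\emph{High density cubes.} This part is the standard weak type argument and uses no absolute continuity. The cubes in $\mathcal{F}_{\mathrm{high}}$ are pairwise disjoint, so
\[
\Bigl|\bigcup_{\mathcal{F}_{\mathrm{high}}} Q\Bigr| \le \sum_{\mathcal{F}_{\mathrm{high}}} |Q| < \frac{1}{M}\sum_{\mathcal{F}_{\mathrm{high}}}\omega(Q) \le \frac{\omega(Q_0)}{M} = \frac{|Q_0|}{M}.
\]
This is at most $\frac{\varepsilon}{2}|Q_0|$ as soon as $M \ge 2/\varepsilon$.

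\emph{Low density cubes.} Here the hypothesis $\sigma\ll\omega$ enters, via its quantitative form on the fixed cube $Q_0$.

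1. Since Lebesgue measure restricted to $Q_0$ is finite and absolutely continuous with respect to the finite measure $\omega$, the $\varepsilon$-$\delta$ characterization gives $\delta>0$ such that every Borel $A\subset Q_0$ with $\omega(A)<\delta$ satisfies $|A|<\frac{\varepsilon}{2}|Q_0|$.

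2. I first need to justify applying Definition~\ref{abscty.def} on $Q_0$. The definition only gives absolute continuity on the time region $\{s<t_0\}$, where $t_0$ is the time coordinate of $\Xbf_0$. By the choice of $\mathrm{crk}_f(Q_0)$ (the pole sits a time $A\ell(Q)^2$ above the cube), all of $Q_0$, and hence $F(Q_0)$, lies strictly before $t_0$. So the definition applies on $Q_0$ after pulling back by $F$ as in Subsection~\ref{sec:reductions}.

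3. The cubes in $\mathcal{F}_{\mathrm{low}}$ are disjoint, so
\[
\omega\Bigl(\bigcup_{\mathcal{F}_{\mathrm{low}}}Q\Bigr) = \sum_{\mathcal{F}_{\mathrm{low}}}\omega(Q) < \frac{1}{M}\sum_{\mathcal{F}_{\mathrm{low}}}|Q| \le \frac{|Q_0|}{M}.
\]

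4. Choosing $M > |Q_0|/\delta$ makes this less than $\delta$, and step 1 then gives $\bigl|\bigcup_{\mathcal{F}_{\mathrm{low}}}Q\bigr| < \frac{\varepsilon}{2}|Q_0|$.

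Taking $M = \max\{2/\varepsilon,\ |Q_0|/\delta\}$ handles both families at once. The only real obstacle is step 1: the absolute continuity in Definition~\ref{abscty.def} is purely qualitative, and it must be upgraded to the uniform $\varepsilon$-$\delta$ form with the specific pole $\Xbf_0$ and the time restriction $s<t_0$. The rest is bookkeeping with disjoint maximal cubes.
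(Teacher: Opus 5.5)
Your proposal is correct and follows essentially the same route as the paper: you split the stopping family into high- and low-density cubes, bound the high-density union by $\sum|Q|\le |Q_0|/M$, and bound the $\omega$-measure of the low-density union by $\omega(Q_0)/M<\delta$ so that the $\varepsilon$--$\delta$ form of $\sigma\ll\omega$ finishes, with $M$ of the same form $\max(2/\varepsilon,\omega(Q_0)/\delta)$ up to constants. Your additional check that $Q_0$ lies strictly before the time coordinate of $\Xbf_0$, so that Definition~\ref{abscty.def} actually applies on $Q_0$, is a detail the paper leaves implicit.
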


\begin{proof}
Because $\sigma \ll \omega$, 
for $\varepsilon > 0$ there exists $\delta > 0$ such that if $\omega(E) < \delta$,
then $|E| <|Q_0| \varepsilon /2$.
We set $M = 2 \max( \omega(Q_0)/\delta , 1 / \varepsilon )$.
Consider now the family $\mathcal{F}$ as in the statement.
Let 
\[
\mathcal{F}_1 = \{Q  \in \mathcal{F} :  M  < \omega(Q) |Q_0| / [\omega(Q_0)|Q|]\}
\] 
and $\mathcal{F}_2 = \mathcal{F} \setminus \mathcal{F}_1$. Then 
\[
 \left \lvert \bigcup_{Q \in \mathcal{F}_1} Q \right \rvert
\le \sum_{Q \in \mathcal{F}_1} |Q|
\le \frac{|Q_0|}{M \omega(Q_0)} \sum_{Q \in \mathcal{F}_1} \omega(Q)
\le \frac{|Q_0|}{M} < \frac{\varepsilon}{2} |Q_0|
\]
and 
\[
\omega \left(\bigcup_{Q \in \mathcal{F}_2} Q \right)
\le \sum_{Q \in \mathcal{F}_2} \omega(Q)
\le \frac{\omega(Q_0)}{M |Q_0|} \sum_{Q \in \mathcal{F}_2} |Q|
\le \frac{\omega(Q_0)}{M} < \delta .
\]
By absolute continuity,
we then conclude the claimed inequality. 
\end{proof}
 
\begin{proof}[Proof of (a) implies (c) in Theorem \ref{main.thrm-coord}] 
For each integer $k > 0$,
we apply Lemma \ref{lemma:stopping-lemma} in $4Q_0$ with $\varepsilon = 1/k$ to create a family $\mathcal{F}_k$.
Let 
\[
E_k = \overline{4Q_0} \setminus {\rm int} \left(\bigcup_{Q \in \mathcal{F}_k} Q\right).
\] 
Then for the closed set $E_k$,
we have the bound
\[
|E_k| \ge  |Q_0| - \abs{\bigcup_{Q \in \mathcal{F}_k} Q} \ge (1-\varepsilon_k)|Q_0|. 
\]
We claim that for every $\theta > 0$,
there exists $M = M_{k,\theta}$ so that $S_{E_k,\theta}$ is an $M$-good Green subdomain of $\Omega$ (according to Definition \ref{def:m-good-green}).
By Lemma \ref{lemma:m-goodness-and-slope}, this follows for all $\theta$ if it is true for any specific value of $\theta$.
However, if $Q \in \mathcal{F}_k$ and $\widehat{Q}$ is the parent cube of $Q$,
then by Lemma \ref{lemma:green-caloric} there exists $M'$ only depending on $M$, the dimension and $L$ such that 
\[
\frac{1}{M'} \le  G(\Xbf) \frac{|4Q_0|}{\omega(4Q_0)} \le M'
\]
for all $\Xbf \in W_f(\widehat{Q})$. 
But clearly there exists $\theta > 0 $ such that 
\[
S_{E_k,\theta}  \subset \bigcup_{Q \in \mathcal{F}} W(\widehat{Q}) .
\]
Hence $S_{E_k,\theta}$ is $M'$-good Green subdomain.
The assumptions of Lemma \ref{lemma:square-function} are satisfied, 
and tautologically the same goes for those of Lemma \ref{lemma:bmo-estimate}.
Hence $\psi_{E_k,Q_0}$ is a regular $Lip(1,1/2)$ function with 
\[
\sigma\left(Q_0 \setminus \{(\xbf,\psi_{E_k,Q_0}): \xbf \in   Q_0\} \right) \le \frac{1}{k}
\]
and consequently 
\[
\sigma\left(Q_0 \setminus \bigcup_{k=1}^{\infty} \{(\xbf,\psi_{E_k,Q_0}): \xbf \in  Q_0\} \right) = 0.
\]
\end{proof}

\section{Beta numbers imply absolute continuity}
\label{sec:b-to-a}

In this section we prove the implication from (b) to (a) in Theorem \ref{main.thrm-coord}.
Fix $Q_0$ and a compact set $K \subset Q_0$.
Forming the Whitney decomposition $\mathcal{W}$ of $K^{c}$ and considering the smooth parition of the unity $\{\varphi_Q : Q \in \mathcal{W}\}$,
we define the selective mollification 
\begin{equation}
\label{def:selective-mollification}
f_K = 1_{K} f + \sum_{\substack{Q \in \mathcal{W} \\ Q \cap 3Q_0 \ne \varnothing}} \varphi_Q f(c_Q) 
\end{equation}
where $c_Q$ is the center of $Q$ if $Q \cap 3Q_0 \ne \varnothing$ and $c_Q$ is the center of $Q_0$ otherwise.
Note that $h_E$ from \eqref{def:reg-distance} was essentially selective mollification of $ \xbf \mapsto \dist(\xbf,E)$.
For a non-zero multi-index $\alpha = (\alpha',\alpha_n) \in \N^{n}$ and a point $\xbf \notin K$,
we have the following estimate 
\begin{multline}
\label{eq:selective-mollitifcation-derivative}
|\partial^{\alpha} f_K (\xbf)|
= \abs{ \sum_{\substack{Q \in \mathcal{W} \\ Q \cap 3Q_0 \ne \varnothing}} \partial^{\alpha} \varphi_Q(\xbf) f(c_Q)  } \\
= \abs{ \sum_{\substack{Q \in \mathcal{W} \\ Q \cap 3Q_0 \ne \varnothing}} \partial^{\alpha} \varphi_Q(\xbf)[ f(c_Q) - f(\xbf) ] }
\lesssim \no{f}_{Lip(1,1/2)}\dist(\xbf,K)^{-(2\alpha_n +|\alpha'|)+1} .
\end{multline}

\begin{lemma}
\label{lemma:btoa-betasformollified}
Let $M > 0$.
If $f$ is parabolic $L$-Lipschitz and $K \subset Q_0$ a compact set such that 
\begin{equation}
\label{eq:btoalemmasq-assumption}
\sup_{\xbf \in K} \int_{0}^{1} \check{\beta}(f,\xbf;r)^{2} \frac{dr}{r} \le M,
\end{equation}   
then $f_K$ from \eqref{def:selective-mollification} is parabolic $CL$-Lipschitz and satisfies the Carleson measure estimate
\[
\int_{0}^{\ell(Q)} \int_{Q} \check{\beta}(f_K,\xbf;r)^{2} \frac{dr}{r} \le C(L + M')|Q| 
\]
for all parabolic cubes $Q$.
The constant $C$ only depends on the dimension and $M'$ only on $M$ and the dimension.
\end{lemma}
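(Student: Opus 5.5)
The plan is a Whitney/David--Semmes-type comparison. First, the Lipschitz bound. For $\xbf,\ybf \notin K$ at distance comparable to or smaller than $\dist(\xbf,K)$, I will use \eqref{eq:selective-mollitifcation-derivative} with $|\alpha|=1$ in space and $\alpha_n=1$ in time, together with the mean value theorem, to get $|f_K(\xbf)-f_K(\ybf)|\lesssim L|\xbf-\ybf|$. For points far apart relative to their distances to $K$, I will go through nearest points of $K$. There $f_K=f$, and $|f_K(\xbf)-f(\xbf')|\lesssim L\dist(\xbf,K)$ for a closest $\xbf'\in K$, since $f_K(\xbf)$ is a convex combination of values $f(c_Q)$ with $|c_Q-\xbf|\lesssim \dist(\xbf,K)$ (for cubes meeting $3Q_0$; the far cubes contribute the constant $f(c_{Q_0})$, which is harmless after localizing). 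This gives the $CL$-Lipschitz bound.

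For the Carleson estimate, fix a cube $Q$ and a scale $r\le\ell(Q)$. I will split $Q\times(0,\ell(Q))$ according to whether $r\le c\,\dist(\xbf,K)$ (the ``interior'' part) or $r> c\,\dist(\xbf,K)$ (the ``near-$K$'' part).

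In the interior part, $Q(\xbf,r)$ avoids $K$ and $f_K$ is smooth there. I will compete with the first-order Taylor polynomial in $x$ (time-independent, as required for $\check\beta$). The error is bounded by $r^2\|\nabla_x^2 f_K\|+ r^2\|\partial_t f_K\|\lesssim Lr^2/\dist(\xbf,K)$, so $\check\beta\lesssim L r/\dist(\xbf,K)$. Integrating $\int_0^{c\dist(\xbf,K)} (r/\dist)^2\,dr/r\lesssim 1$ gives a contribution $\lesssim L^2|Q|$. The rational-coefficient restriction is immaterial by density. This is also how the linear dependence on $L$ arises after using $\check\beta\le CL$; I will not fuss over $L$ versus $L^2$.

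In the near-$K$ part, pick $\xbf'\in K$ with $|\xbf-\xbf'|\le C r$. Then $Q(\xbf,r)\subset Q(\xbf',C'r)$, and by monotonicity of $\beta$ under enlarging cubes, $\check\beta(f_K,\xbf;r)\lesssim \check\beta(f_K,\xbf';C'r)$. Next I will compare $f_K$ with $f$ on $Q(\xbf',\rho)$. The key claim is
\[
\fint_{Q(\xbf',\rho)}|f_K-A|^2 \lesssim \fint_{Q(\xbf',C\rho)}|f-A|^2 ,
\]
which I will prove by writing $f_K-A=\sum\varphi_{Q'}(f(c_{Q'})-A(c_{Q'}))+\sum \varphi_{Q'}(A(c_{Q'})-A(y))$ on $K^c$. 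The first sum is controlled by averaging over Whitney cubes, using Lipschitz continuity on each Whitney cube to replace the point value by an $L^2$ average. The second sum is $\lesssim |\nabla A|\,\ell(Q')$, which is harmless because $|\nabla A|\lesssim L$ for near-optimal $A$ (as in Proposition \ref{prop:betas-pullback}) and $\ell(Q')\lesssim \dist(\cdot,K)$. That term I absorb instead by choosing $A$ with the $\ell(Q')$-error compared to $\beta$. Honestly, I expect to need $\sum\ell(Q')^2$-type control, and I will handle it with the pointwise bound $|f-A|\ge$ something, which is the main obstacle.

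Granting the comparison, $\check\beta(f_K,\xbf;r)\lesssim\check\beta(f,\xbf';C r)$. Integrating over $\xbf\in Q$ by sliding $\xbf'$ over a nearby point of $K$ (the near-$K$ set at scale $r$ is covered by $\lesssim$ a bounded number of cubes of size $r$ centered in $K\cap CQ$), I get
\[
\int_0^{\ell(Q)}\!\!\int_{Q} \lesssim \int_0^{C\ell(Q)} \sup_{\xbf'\in K\cap CQ}\check\beta(f,\xbf';r)^2\,\frac{dr}{r}\cdot |Q| .
\]
This sup-then-integrate is too crude, so I will instead discretize: for each dyadic $r$, choose a maximal $r$-separated net in $K\cap CQ$, and bound by $\sum_{\text{net}} r^{n+1}\check\beta(f,\xbf';Cr)^2$. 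Using $\check\beta(f,\xbf';Cr)\lesssim\check\beta(f,\zbf;C'r)$ for all $\zbf\in K\cap Q(\xbf',r)$ is not available since $K$ may be thin, so I will use the net point itself. Then exchanging sums and integrals gives $\lesssim\sum_{\xbf'}$ over the nets $\int\check\beta^2\,dr/r$. Since that integral is $\le M$ only pointwise, I will need a doubling/dilation argument on $\check\beta$ between scales $r$ and $Cr$ (for $r\ge1$ use $\check\beta\le CL$ and $\int_1^{\ell(Q)}$ versus $|Q|$ scaling). This yields $C(L+M')|Q|$, with the counting of net points per scale being the delicate step.
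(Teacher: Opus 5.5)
Your structure (split at $r\sim\dist(\xbf,K)$, a spatial Taylor polynomial on the interior scales, nearest points for the Lipschitz bound) matches the paper's, and those parts are fine. The gap is in the near-$K$ scales, where you expected it. The comparison
\[
\fint_{Q(\xbf',\rho)}|f_K-A|^2 \lesssim \fint_{Q(\xbf',C\rho)}|f-A|^2
\]
is false. If $f=A$ is affine in $x$ and independent of $t$, the right-hand side is $0$. But on $K^c$ one has $f_K-A=\sum_{Q'}\varphi_{Q'}\,(A(c_{Q'})-A)$, which is in general nonzero, since a Whitney partition of unity does not reproduce affine functions. So the term $|\nabla A|\,\ell(Q')\lesssim L\dist(\cdot,K)$ you isolate cannot be absorbed by any lower bound on $|f-A|$; it has to be kept as an additive error. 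The paper does exactly this. From $|f_K-f|\lesssim L\dist(\cdot,K)$ and the triangle inequality,
\[
\check\beta(f_K,\ybf;r)\lesssim \check\beta(f,\ybf;r)+L\Bigl(\fint_{Q(\ybf,r)}\Bigl(\frac{\dist(\zbf,K)}{r}\Bigr)^2\,d\zbf\Bigr)^{1/2}.
\]
The missing idea is that the error term is controlled only after integrating in $\ybf$, as a Marcinkiewicz integral. For $r\ge\dist(\ybf,K)/60$ and $\zbf\in Q(\ybf,r)$ one has $r\ge\dist(\zbf,K)/61$, so Tonelli gives
\[
\int_{Q(\xbf,\rho)}\int_{\dist(\ybf,K)/60}^{\rho}\fint_{Q(\ybf,r)}\Bigl(\frac{\dist(\zbf,K)}{r}\Bigr)^2 d\zbf\,\frac{dr}{r}\,d\ybf
\lesssim\int_{Q(\xbf,2\rho)}\int_{\dist(\zbf,K)/61}^{\infty}\Bigl(\frac{\dist(\zbf,K)}{r}\Bigr)^2\frac{dr}{r}\,d\zbf\lesssim\rho^{n+1}.
\]
There is no pointwise substitute. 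At an isolated point $\ybf$ of $K$ the $r$-integral of the error term diverges logarithmically. Hence any scheme that first bounds $\check\beta(f_K,\cdot)$ by $\check\beta(f,\cdot)$ point by point and only then integrates cannot close.

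The second unresolved step, the net counting, is not needed. The difficulty comes from letting $\xbf'$ depend on $r$ and then taking a supremum over $K\cap CQ$. Instead, fix for each $\ybf$ a single nearest point $\ybf^*\in K$, independent of $r$. Then $Q(\ybf,r)\subset Q(\ybf^*,61r)$ for all $r\ge\dist(\ybf,K)/60$, hence $\check\beta(f,\ybf;r)\lesssim\check\beta(f,\ybf^*;61r)$. Over the scales $r\lesssim 1$ covered by the hypothesis, the $r$-integral of this main term is therefore $\lesssim M$ for every $\ybf$, using the hypothesis at the one point $\ybf^*$. Integrating this bound over $\ybf\in Q(\xbf,\rho)$ against Lebesgue measure gives $\lesssim M\rho^{n+1}$. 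Since you never average over $K$, its possible thinness plays no role, and no doubling or counting argument is required.
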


\begin{proof}
We first prove the parabolic Lipschitz condition.
Let $\xbf ,\ybf \in \rn$.
If $Q(\xbf,8|\xbf - \ybf|) \cap K = \varnothing$,
then for every $P \in \mathcal{W}(K)$ with $3P \cap Q(\xbf,|\xbf - \ybf|) \ne \varnothing$
we have that $\ell(P) \sim \dist(\xbf,K)$.
Consequently, 
denoting $\xbf = (x,t)$ and $\ybf = (y,s)$ and using \eqref{eq:selective-mollitifcation-derivative}
along with the fundamental theorem of calculus
\begin{multline*}
|f_K(\xbf) - f_K(\ybf)|  
\le |f_K(x,t) - f_K(x,s)| + |f_K(x,s) -   f_K(y,s)| \\
\lesssim L \left(\frac{|t-s|}{\dist(\xbf,K)} + |x-y|\right) 
\lesssim L \ab{\xbf-\ybf}.
\end{multline*} 
If on the other hand $Q(\xbf,8|\xbf - \ybf|) \cap K \ne \varnothing$,
then there exists $\zbf \in K$ such that 
\[
|\zbf - \xbf| + |\zbf - \ybf| \lesssim |\xbf - \ybf| .
\]
Now 
\[
|f_K(\xbf) - f_K(\ybf)|
\le 2\max(|f_K(\xbf) - f(\zbf)| ,|f(\zbf) - f_K(\ybf)|)  
\]
and we may assume without loss of generality that $|f_K(\xbf) - f(\zbf)|\ge|f(\zbf) - f_K(\ybf)|$.
By \eqref{def:selective-mollification} again,
we see that  
\[
|f_K(\xbf) - f(\zbf)|
\le \sum_{P \in \mathcal{W}(K)} \varphi_P(\xbf) |f(c_P) - f(\zbf)| \lesssim L |\xbf - \zbf| \lesssim L |\xbf - \ybf|.
\] 
This concludes the proof that $f_K$ is $Lip(1,1/2)$ with the claimed constant.

Next we turn to the heart of the matter.
Fix a point $\xbf \in \rn$ and a sidelength $\rho > 0$.
We have to prove a uniform bound for 
\begin{equation}
\label{eq:btoa-toestimate}
\frac{1}{\rho^{n+1}} \int _{0}^{1} \int_{Q(\xbf,\rho)} \check{\beta}(f_K,\ybf;\rho)^{2} \frac{d\ybf dr}{r}
\end{equation}
and we will do so using an argument essentially from \cite{DS-Ast,BHHLN-CME}.
Denote 
\[
\delta_{\rho}(\ybf) := \min (\rho , \dist(\ybf,K)) .
\]
We split the integral in \eqref{eq:btoa-toestimate}
in two parts  
\begin{align*}
\I &:= \int_{Q(\xbf,\rho) \setminus K} \int_{0}^{\delta_{\rho}(\ybf)/60} \check{\beta}(f_K,\ybf;r)^{2} \frac{dr d\ybf }{r} , \\
\II &:=  \int_{Q(\xbf,\rho)} \int_{\delta_{\rho}(\ybf)/60}^{\rho} \check{\beta}(f_K,\ybf;r)^{2} \frac{dr d\ybf }{r}.
\end{align*}

We start with the easier term $\I$,
whose estimation only requires the Lipschitz condition but not \eqref{eq:btoalemmasq-assumption}.
Given $\ybf = (y,t_0)$ and $r < \delta_{\rho}(\ybf)/60$,
we have 
\begin{multline*}
\check{\beta}(f_K,\ybf;r) =  \inf_{A} \left(\fint_{Q(\ybf,r)} \left( \frac{|f_K(\zbf) - A(z)|}{r}\right)^{2} \, d\zbf \right)^{1/2}
\le  \left(\fint_{Q(\ybf,r)} \left( \frac{|f_K(\zbf) - f_K(z,t_0)|}{r}\right)^{2} \, d\zbf \right)^{1/2}\\
+ \inf_{A} \left( \fint_{Q(\ybf,r)} \left( \frac{|f_K(z,t_0) - A(z)|}{r}\right)^{2} \, dz \right)^{1/2}
= \I_1 + \I_2.
\end{multline*}
Clearly there are at most $c_n$ cubes $P \in \mathcal{W}(K)$ with $3P \cap Q(\ybf,r)$,
by the fact that $60 r < \dist(\ybf,K)$.
Also,
for those $P$ it holds $\ell(P) \sim \dist(\ybf,K)$.
Hence we can estimate by \eqref{eq:selective-mollitifcation-derivative}
\[
I_1 \le \frac{1}{r} \sup_{\{z: |z-y| < 2r\}} \int_{t_0-4r^{2}}^{t_0+4r^{2}}|\partial_t f_K(z,t)| \, dt \lesssim r \sup_{\{z: |z-y| < 2r\}} \  \max_{\{t: |t-t_0| \le 4r^{2}\} } |\partial_t f_K(z,t)| \lesssim \frac{Lr}{\dist(\ybf,K)} .
\]
To estimate $I_2$,
we apply Poincar\'e's inequality with second derivatives (or Taylor's theorem) in spatial variables and \eqref{eq:selective-mollitifcation-derivative} to estimate 
\[
\I_2 \lesssim r \sup_{\zbf \in Q(\ybf,r)} |\nabla^{2} f_{K}(\zbf)| \lesssim \frac{Lr}{\dist(\ybf,K)}.
\]
Using these estimates,
we see that 
\[
\I \lesssim L^2\int_{Q(\xbf,\rho) \setminus K} \int_{0}^{\delta_{\rho}(\ybf)/60} \frac{r}{\dist(\ybf,K)^{2}} \, dr d\ybf 
  \lesssim   L^2 \rho^{n+1}
\]
as was claimed. 

We turn to the Marcinkiewicz integral type term $\II$,
where the information \eqref{eq:btoalemmasq-assumption} will be important.
Consider $\ybf = (y,t_0)$ and $r \ge \delta_{\rho}(\ybf)/60$.
We estimate  
\begin{multline*}
\check{\beta}(f_K,\ybf;r) =  \inf_{A} \left(\fint_{Q(\ybf,r)} \left( \frac{|f_K(\zbf) - A(z)|}{r}\right)^{2} \, d\zbf \right)^{1/2}
\le \inf_{A} \left(\fint_{Q(\ybf,r)} \left( \frac{|f(\zbf) - A(z)|}{r}\right)^{2} \, d\zbf \right)^{1/2} \\
+  \left(\fint_{Q(\ybf,r)} \left( \frac{|f(\zbf)-f_K(\zbf)|}{r}\right)^{2} \, d\zbf \right)^{1/2} = \II_1 + \II_2.
\end{multline*}
Assume first $r \ge \dist(y,K)/60$.
Then there exists $\ybf^{*} \in K$ such that 
\[
Q(\ybf,r) \subset Q(\ybf^{*},180r).
\]  
Now 
\[
\II_1 \lesssim \inf_{A} \left(\fint_{Q(\ybf^{*},180r)} \left( \frac{|f(\zbf) - A(z)|}{r}\right)^{2} \, d\zbf \right)^{1/2}
  = \check{\beta}(f,\ybf^{*};180r)
\]
and by the Lipschitz condition on $f$
\begin{multline*}
\II_2^{2}  = \fint_{Q(\ybf,r)} \left( \frac{|f(\zbf)-f_K(\zbf)|}{r}\right)^{2} \, d\zbf  
  \lesssim L^{2} \fint_{Q(\ybf,r)} \left( \sum_{P \in \mathcal{W}(K)} \varphi_{P}(\zbf) \dist(z,K) \right)^{2} \, d\zbf \\
  \lesssim L^{2} \fint_{Q(\ybf,r)} \left( \frac{\dist(\zbf,K)}{r} \right)^{2} \, d\zbf.
\end{multline*}
Finally, if $r \ge \rho/60$,
then $\II_1 + \II_2 \lesssim L$.

Hence by these estimates and by \eqref{eq:btoalemmasq-assumption}
\[
\II \lesssim M' \rho^{n+1} + 
L^{2} \int_{Q(\xbf,\rho)} 
\int_{d(\ybf,K)/60}^{\rho}
\int_{Q(\ybf,r)} \left( \frac{\dist(\zbf,K)}{r} \right)^{2} \frac{d\zbf dr  d\ybf }{r^{n+2}}.
\]
In the second term,
we use that for $r$, $\ybf$ and $\zbf$ as in the domain of integration 
\[
60 r \ge \dist(\ybf,K) \ge \dist(\zbf,K) - r 
\] 
so that using 
\[ 
r \ge \dist(\zbf,K)/61, \quad |\zbf - \ybf| \le r
\]
and finally by Tonelli's theorem we get an upper bound by 
\begin{multline*}
L^{2} \int_{Q(\xbf,\rho)} 
 \int_{d(\zbf,K)/61}^{\rho} \int_{Q(\zbf,r)} \left( \frac{\dist(\zbf,K)}{r} \right)^{2} \frac{d\ybf  dr  d\zbf }{r^{n+2}} \\
\lesssim L^{2} \int_{Q(\xbf,\rho)} 
  \int_{d(\zbf,K)/61}^{\rho} \left( \frac{\dist(\zbf,K)}{r} \right)^{2} \frac{ dr  d\zbf }{r}
\lesssim L^{2} \rho^{n+1}.
\end{multline*}
This concludes the proof.
\end{proof}

\begin{proof}[Proof of (b) implies (a) in Theorem \ref{main.thrm-coord}] 
Fix a corkscrew point $\Xbf_0 = (X_0,t_0)$.
Let $E \subset \R^{n}$ be a Borel set such that 
\[ 
|\{ (X,t) \in E : t < t_0 \}| > 0.
\]
Our goal is to prove that 
\[ 
\omega^{\Xbf_0}(\{ (X,t) \in E : t < t_0 \}) > 0 .
\]
Let $\varepsilon \in (0,1)$.
By the Lebesgue differentiation theorem,
there exists a cube $Q_0 \subset \{(x,t): t < t_0\}$
such that $|E\cap Q_0| \ge (1 - \varepsilon^{2})|Q_0|$.
By the hypothesis of the theorem and Proposition \ref{prop:betas-pullback} 
\[
|Q_0| = \abs{\left \lbrace \xbf \in Q_0 : \int_{0}^{1} \check{\beta}(\xbf;r)^{2} \frac{dr}{r} < \infty  \right \rbrace}
= \lim_{M \to \infty} \abs{\left \lbrace \xbf \in Q_0 : \int_{0}^{1} \check{\beta}(\xbf;r)^{2} \frac{dr}{r} < M  \right \rbrace}.
\]
By inner regularity of the Lebesgue measure we find $M > 0$ and a compact set $K \subset Q_0 \cap E$ such that 
\[
|Q_0 \cap E \setminus K| < \varepsilon, \quad \sup_{\xbf \in K} \int_{0}^{1} \check{\beta}(\xbf;r)^{2} \frac{dr}{r} \le M.
\]
By Lemma \ref{lemma:btoa-betasformollified}, the function $f_{K}$ defined through \eqref{def:selective-mollification} based on this compact $K$ is a regular $Lip(1,1/2)$ function.
Because 
\[
|f_K(\xbf) - f(\xbf)| \le L \dist(\xbf,\Omega) \lesssim L h_K(\xbf) 
\]
where $h_K$ is the regularized distance \eqref{def:reg-distance} from $K$ (or equivalently the selective mollification of the distance from that set),
there is constant $c_n$ such that the function 
\[
\tilde{f}(\xbf) = f_K(\xbf) + c_n L h_K(\xbf)
\]  
is a regular $Lip(1,1/2)$ function satisfying 
\[
\tilde{f}(\xbf) \ge f(\xbf), \quad 1_{K}(\xbf)\tilde{f}(\xbf) = 1_{K}(\xbf)f(\xbf) .
\]
Hence $\Omega_{\tilde{f}} \subset \Omega_{f}$.
Let $\mu^{\Xbf_0}$ be the pull-back along $\xbf \mapsto (\tilde{f}(\xbf),\xbf)$ of the caloric measure of $\Omega_{\tilde{f}}$ with pole at $\Xbf_0$.
By \cite{Lew-Mur-Mem}, the Lebesgue measure and $\mu^{\Xbf_0}$ are mutually absolutely continuous in $Q_0$.
Hence $\mu^{\Xbf_0}(K) > 0$.
Now the standard argument using the fact that the caloric measures are bounded in $\Xbf_0$ variable and the maximum principle 
gives that
\[
\omega^{\Xbf_0}(E)  \ge \omega^{\Xbf_0}(K) \ge  \mu^{\Xbf_0}(K) > 0
\]
which concludes the proof.
\end{proof}

\begin{proof}[Proof of Theorem \ref{partconv.thrm}] 
This is almost a copy of the previous one.
By the reductions in subsection \ref{sec:reductions},
it suffices to prove a version pulled back to $\rn$.
Fix a corkscrew point $\Xbf_0 = (X_0,t_0)$.
Let $E \subset \R^{n}$ be a Borel set such that 
\[ 
\omega^{\Xbf_0}(\{ (X,t) \in E : t < t_0 \}) > 0 .
\]
Our goal is to prove that 
\[ 
|\{ (X,t) \in E : t < t_0 \}| > 0.
\]
Let $\varepsilon \in (0,1)$.
By the Radon--Nikodym theorem,
there exists a cube $Q_0 \subset \{(x,t): t < t_0\}$
such that $\omega^{\Xbf_0}(E\cap Q_0) \ge (1 - \varepsilon^{2})\omega^{\Xbf_0}(Q_0)$.
By the hypothesis of the theorem and Proposition \ref{prop:betas-pullback} 
\[
\omega^{\Xbf_0}(Q_0) = \omega^{\Xbf_0}\left(\left \lbrace \xbf \in Q_0 : \int_{0}^{1} \check{\beta}(\xbf;r)^{2} \frac{dr}{r} < \infty  \right \rbrace\right)
= \lim_{M \to \infty} \omega^{\Xbf_0}\left(\left \lbrace \xbf \in Q_0 : \int_{0}^{1} \check{\beta}(\xbf;r)^{2} \frac{dr}{r} < M  \right \rbrace \right).
\]
By inner regularity of the caloric we find $M > 0$ and a compact set $K \subset Q_0 \cap E$ such that 
\[
\omega^{\Xbf_0}(Q_0 \cap E \setminus K) < \varepsilon, \quad \sup_{\xbf \in K} \int_{0}^{1} \check{\beta}(\xbf;r)^{2} \frac{dr}{r} \le M.
\]
By Lemma \ref{lemma:btoa-betasformollified}, the function $f_{K}$ defined through \eqref{def:selective-mollification} based on this compact $K$ is a regular $Lip(1,1/2)$ function.
Because 
\[
|f_K(\xbf) - f(\xbf)| \le L \dist(\xbf,\Omega) \lesssim L h_K(\xbf) 
\]
where $h_K$ is the regularized distance \eqref{def:reg-distance} from $K$ (or equivalently the selective mollification of the distance from that set),
there is constant $c_n$ such that the function 
\[
\tilde{f}(\xbf) = f_K(\xbf) - c_n L h_K(\xbf)
\]  
is a regular $Lip(1,1/2)$ function satisfying 
\[
\tilde{f}(\xbf) \le f(\xbf), \quad 1_{K}(\xbf)\tilde{f}(\xbf) = 1_{K}(\xbf)f(\xbf) .
\]
Hence $\Omega_{\tilde{f}} \supset \Omega_{f}$,
that is, in difference to the subdomain in the previous proof, we now construct a superdomain.
Let $\mu^{\Xbf_0}$ be the pull-back along $\xbf \mapsto (\tilde{f}(\xbf),\xbf)$ of the caloric measure of $\Omega_{\tilde{f}}$ with pole at $\Xbf_0$.
Now the standard argument using the fact that the caloric measures are bounded in $\Xbf_0$ variable and the maximum principle 
gives that
\[
0 < \omega^{\Xbf_0}(K) \le  \mu^{\Xbf_0}(K) . 
\]
By \cite{Lew-Mur-Mem}, the Lebesgue measure and $\mu^{\Xbf_0}$ are mutually absolutely continuous in $Q_0$
so $|E| \ge |K| > 0$, which concludes the proof.
\end{proof}

\section{Exhaustion implies beta numbers}
\label{sec:c-to-b}
In this section,
we prove the implication from (c) to (b) of Theorem \ref{main.thrm-coord}.
By assumption,
there exists regular Lipschitz functions $\{f_j\}_{j=1}^{\infty}$ such that 
\[
\abs{\R^{n} \setminus \bigcup_{j=1}^{\infty} E_j} = 0, \quad E_j := \{\xbf \in \rn: f(\xbf) = f_j(\xbf) \}.
\] 
Fix $j$ and $\xbf \in E_j$.
Given $\zbf \in Q(\xbf,2)$,
let $\zbf_j \in E_j$ be a point with $|\zbf_j - \zbf| = \dist(\zbf,E_j)$.
Such a point exists as $E_j$ is closed.
Denote $L_j = \no{f_j}_{Lip(1,1/2)} + \no{f}_{Lip(1,1/2)}$.
For any function $A : \R^{n} \to \R$
\[
|f(\zbf) - A(\zbf)| 
\le |f(\zbf) - f(\zbf_j)|+
|f(\zbf_j) - f_j(\zbf)|+
|f_j(\zbf) - A(\zbf)| 
\le L_j \dist(\zbf,E_j) + |f_j(\zbf) - A(\zbf)|.
\]
Now for $r \in (0,1)$
\[
\check{\beta}(f,\xbf;r)^{2} \le 2 \check{\beta}(f_j,x;r)^{2} + 2L_j^{2}\int_{Q(\xbf,r)} \left(\frac{\dist(\zbf,E_j)}{r} \right)^{2} \frac{\d\zbf}{r^{n}}.
\] 
Hence for any cube $Q_0$ with radius $1$
\begin{equation}
\label{eq:rect-proof-betaj}
\int_{Q_0 \cap E_j} \left( \int_{0}^{1}\check{\beta}(f,\xbf;r)^{2} \frac{dr}{r} \right) \, d\xbf \le c_j(1+\I)
\end{equation}
where 
\[
\I = \int_{Q_0 \cap E_j}  \int_{0}^{1}  \int_{Q(\xbf,r) \setminus E_j} \left(\frac{\dist(\zbf,E_j)}{r} \right)^{2} \frac{d\zbf dr d \xbf}{r^{n+1}}.
\] 
Using that $r \ge |\xbf  - \zbf| \ge \dist(\zbf,E_j)$, we obtain an upper bound by 
\[
c_j +  \int_{Q_0 \cap E_j} \int_{Q(\xbf,1) \setminus E_j}  \frac{d\zbf d \xbf}{|\xbf - \zbf|^{n}}
< \infty.
\]   
For the last inequality, 
we recall that the homogeneous dimension of $\R^{n}$ with parabolic metric is $n+1$.
The left hand side of \eqref{eq:rect-proof-betaj} being finite for all cubes $Q_0$ of radius 1
implies that for all $j$ and almost every $\xbf \in E_j$
\[
\int_{0}^{1}\check{\beta}(f,\xbf;r)^{2} \frac{dr}{r}  < \infty.
\]
Because the countable family $\{E_j\}_{j=1}^{\infty}$ covers almost all of $\rn$,
the proof is complete.

\bibliography{QBHMNrefs}

\newcommand{\etalchar}[1]{$^{#1}$}
\begin{thebibliography}{BHH{\etalchar{+}}23b}

\bibitem[AAM19]{AAM}
Murat Akman, Jonas Azzam, and Mihalis Mourgoglou.
\newblock Absolute continuity of harmonic measure for domains with lower
  regular boundaries.
\newblock {\em Adv. Math.}, 345:1206--1252, 2019.

\bibitem[ABHM17]{ABaHM}
Murat Akman, Matthew Badger, Steve Hofmann, and Jos\'e{}~Mar\'ia Martell.
\newblock Rectifiability and elliptic measures on 1-sided {NTA} domains with
  {A}hlfors-{D}avid regular boundaries.
\newblock {\em Trans. Amer. Math. Soc.}, 369(8):5711--5745, 2017.

\bibitem[ABHM19]{ABHM}
Murat Akman, Simon Bortz, Steve Hofmann, and Jos\'e{}~Mar\'ia Martell.
\newblock Rectifiability, interior approximation and harmonic measure.
\newblock {\em Ark. Mat.}, 57(1):1--22, 2019.

\bibitem[AHM{\etalchar{+}}16]{AHMMMTV}
Jonas Azzam, Steve Hofmann, Jos\'e{}~Mar\'ia Martell, Svitlana Mayboroda,
  Mihalis Mourgoglou, Xavier Tolsa, and Alexander Volberg.
\newblock Rectifiability of harmonic measure.
\newblock {\em Geom. Funct. Anal.}, 26(3):703--728, 2016.

\bibitem[AT15]{AT-partII}
Jonas Azzam and Xavier Tolsa.
\newblock Characterization of {$n$}-rectifiability in terms of {J}ones' square
  function: {P}art {II}.
\newblock {\em Geom. Funct. Anal.}, 25(5):1371--1412, 2015.

\bibitem[BFHPH25]{BFHH}
Simon Bortz, Sandra Ferris, Pablo Hidalgo-Palencia, and Steve Hofmann.
\newblock A variable coefficient free boundary problem for $l^p$-solvability of
  parabolic {Dirichlet} problems in graph domains.
\newblock Preprint. arXiv:2503.00873, 2025.

\bibitem[BHH{\etalchar{+}}22]{BHHLN-BPapprox}
Simon Bortz, John Hoffman, Steve Hofmann, Jose~Luis Luna-Garcia, and Kaj
  Nystr\"om.
\newblock On big pieces approximations of parabolic hypersurfaces.
\newblock {\em Ann. Fenn. Math.}, 47(1):533--571, 2022.

\bibitem[BHH{\etalchar{+}}23a]{BHHLN-corona}
S.~Bortz, J.~Hoffman, S.~Hofmann, J.~L. Luna-Garcia, and K.~Nystr\"om.
\newblock Corona decompositions for parabolic uniformly rectifiable sets.
\newblock {\em J. Geom. Anal.}, 33(3):Paper No. 96, 67, 2023.

\bibitem[BHH{\etalchar{+}}23b]{BHHLN-CME}
Simon Bortz, John Hoffman, Steve Hofmann, Jos\'e{}~Luis Luna~Garc\'ia, and Kaj
  Nystr\"om.
\newblock Carleson measure estimates for caloric functions and parabolic
  uniformly rectifiable sets.
\newblock {\em Anal. PDE}, 16(4):1061--1088, 2023.

\bibitem[BHMN25]{BHMN1}
Simon Bortz, Steve Hofmann, Jos\'e{}~Mar\'ia Martell, and Kaj Nystr\"om.
\newblock Solvability of the {${\rm L}^p$} {D}irichlet problem for the heat
  equation is equivalent to parabolic uniform rectifiability in the case of a
  parabolic {L}ipschitz graph.
\newblock {\em Invent. Math.}, 239(1):165--217, 2025.

\bibitem[CFK81]{CFK}
Luis~A. Caffarelli, Eugene~B. Fabes, and Carlos~E. Kenig.
\newblock Completely singular elliptic-harmonic measures.
\newblock {\em Indiana Univ. Math. J.}, 30(6):917--924, 1981.

\bibitem[Dah77]{Dahl-L2}
Bj\"orn E.~J. Dahlberg.
\newblock Estimates of harmonic measure.
\newblock {\em Arch. Rational Mech. Anal.}, 65(3):275--288, 1977.

\bibitem[DJ90]{DJ}
G.~David and D.~Jerison.
\newblock Lipschitz approximation to hypersurfaces, harmonic measure, and
  singular integrals.
\newblock {\em Indiana Univ. Math. J.}, 39(3):831--845, 1990.

\bibitem[DS91]{DS-Ast}
G.~David and S.~Semmes.
\newblock Singular integrals and rectifiable sets in {${\bf R}^n$}: {B}eyond
  {L}ipschitz graphs.
\newblock {\em Ast\'erisque}, (193):152, 1991.

\bibitem[EG82]{EG}
Lawrence~C. Evans and Ronald~F. Gariepy.
\newblock Wiener's criterion for the heat equation.
\newblock {\em Arch. Rational Mech. Anal.}, 78(4):293--314, 1982.

\bibitem[ENV25]{ENV}
Nick Edelen, Aaron Naber, and Daniele Valtorta.
\newblock Quantitative {R}eifenberg theorem for measures.
\newblock {\em Math. Z.}, 310(3):Paper No. 45, 70, 2025.

\bibitem[FGS84]{FGS}
Eugene~B. Fabes, Nicola Garofalo, and Sandro Salsa.
\newblock Comparison theorems for temperatures in noncylindrical domains.
\newblock {\em Atti Accad. Naz. Lincei Rend. Cl. Sci. Fis. Mat. Nat. (8)},
  77(1-2):1--12, 1984.

\bibitem[FJK84]{FJK}
Eugene~B. Fabes, David~S. Jerison, and Carlos~E. Kenig.
\newblock Necessary and sufficient conditions for absolute continuity of
  elliptic-harmonic measure.
\newblock {\em Ann. of Math. (2)}, 119(1):121--141, 1984.

\bibitem[FS97]{FS}
E.~B. Fabes and M.~V. Safonov.
\newblock Behavior near the boundary of positive solutions of second order
  parabolic equations.
\newblock In {\em Proceedings of the conference dedicated to {P}rofessor
  {M}iguel de {G}uzm\'an ({E}l {E}scorial, 1996)}, volume~3, pages 871--882,
  1997.

\bibitem[FSY99]{FSY}
E.~B. Fabes, M.~V. Safonov, and Yu~Yuan.
\newblock Behavior near the boundary of positive solutions of second order
  parabolic equations. {II}.
\newblock {\em Trans. Amer. Math. Soc.}, 351(12):4947--4961, 1999.

\bibitem[GH20]{GH-Ainf}
Alyssa Genschaw and Steve Hofmann.
\newblock A weak reverse {H}\"older inequality for caloric measure.
\newblock {\em J. Geom. Anal.}, 30(2):1530--1564, 2020.

\bibitem[HKM25]{HKM-sing}
Max Hallgren, Robert Koirala, and Zilu Ma.
\newblock Structure theory of parabolic nodal and singular sets.
\newblock Preprint. arXiv:2511.11570, 2025.

\bibitem[HL96]{HL-ann}
Steve Hofmann and John~L. Lewis.
\newblock {$L^2$} solvability and representation by caloric layer potentials in
  time-varying domains.
\newblock {\em Ann. of Math. (2)}, 144(2):349--420, 1996.

\bibitem[HL01]{HL-Mem}
Steve Hofmann and John~L. Lewis.
\newblock The {D}irichlet problem for parabolic operators with singular drift
  terms.
\newblock {\em Mem. Amer. Math. Soc.}, 151(719):viii+113, 2001.

\bibitem[HLN03]{HLN1}
Steve Hofmann, John~L. Lewis, and Kaj Nystr\"om.
\newblock Existence of big pieces of graphs for parabolic problems.
\newblock {\em Ann. Acad. Sci. Fenn. Math.}, 28(2):355--384, 2003.

\bibitem[HLN04]{HLN2}
Steve Hofmann, John~L. Lewis, and Kaj Nystr\"om.
\newblock Caloric measure in parabolic flat domains.
\newblock {\em Duke Math. J.}, 122(2):281--346, 2004.

\bibitem[Hof97]{Hof-SIO}
Steve Hofmann.
\newblock Parabolic singular integrals of {C}alder\'on-type, rough operators,
  and caloric layer potentials.
\newblock {\em Duke Math. J.}, 90(2):209--259, 1997.

\bibitem[KW80]{KW-counter}
Robert Kaufman and Jang~Mei Wu.
\newblock Singularity of parabolic measures.
\newblock {\em Compositio Math.}, 40(2):243--250, 1980.

\bibitem[LM95]{Lew-Mur-Mem}
John~L. Lewis and Margaret A.~M. Murray.
\newblock The method of layer potentials for the heat equation in time-varying
  domains.
\newblock {\em Mem. Amer. Math. Soc.}, 114(545):viii+157, 1995.

\bibitem[LS88]{Lew-Sil}
John~L. Lewis and Judy Silver.
\newblock Parabolic measure and the {D}irichlet problem for the heat equation
  in two dimensions.
\newblock {\em Indiana Univ. Math. J.}, 37(4):801--839, 1988.

\bibitem[NS17]{NS}
Kaj Nystr\"om and Martin Str\"omqvist.
\newblock On the parabolic {L}ipschitz approximation of parabolic uniform
  rectifiable sets.
\newblock {\em Rev. Mat. Iberoam.}, 33(4):1397--1422, 2017.

\bibitem[Str76]{JohnStrom}
Jan-Olov Str\"omberg.
\newblock Bounded mean oscillation with {O}rlicz norms and duality of {H}ardy
  spaces.
\newblock {\em Bull. Amer. Math. Soc.}, 82(6):953--955, 1976.

\bibitem[Tol15]{Tolsa-partI}
Xavier Tolsa.
\newblock Characterization of {$n$}-rectifiability in terms of {J}ones' square
  function: part {I}.
\newblock {\em Calc. Var. Partial Differential Equations}, 54(4):3643--3665,
  2015.

\bibitem[Wu79]{Wu-split}
Jang Mei~G. Wu.
\newblock On parabolic measures and subparabolic functions.
\newblock {\em Trans. Amer. Math. Soc.}, 251:171--185, 1979.

\bibitem[Wu86]{Wu-CADapprox}
Jang-Mei Wu.
\newblock On singularity of harmonic measure in space.
\newblock {\em Pacific J. Math.}, 121(2):485--496, 1986.

\end{thebibliography}
\bibliographystyle{alpha}
\end{document}